\documentclass{article}
\usepackage[final,nonatbib]{nips_2016_v2}
\usepackage{amsthm,amsmath,amssymb,dsfont}
\usepackage{graphicx,xcolor}
\usepackage{bbm}
\usepackage{subcaption}
\usepackage{nicefrac}
\usepackage{setspace}
\usepackage[compact]{titlesec}
\usepackage[font=footnotesize,labelfont=bf]{caption}

\newtheorem{theorem}{Theorem}[section]

\newtheorem{assumption}{Assumption}[section]
\newtheorem{lemma}[theorem]{Lemma}

\DeclareMathOperator*{\argmin}{\arg\!\min}
\DeclareMathOperator*{\argmax}{\arg\!\max}

\usepackage{geometry}
\def\grad{\nabla}

\def\bb{\mathbf{b}}

\def\be{\mathbf{e}}

\def\bh{\mathbf{h}}

\def\bp{\mathbf{p}}
\def\bq{\mathbf{q}}

\def\bs{\mathbf{s}}

\def\bv{\mathbf{v}}
\def\bw{\mathbf{w}}
\def\bx{\mathbf{x}}  
\def\by{\mathbf{y}}
\def\bz{\mathbf{z}}

\def\bD{\mathbf{D}}

\def\bI{\mathbf{I}}

\def\bQ{\mathbf{Q}}

\def\cB{\mathcal{B}}
\def\cC{\mathcal{C}}
\def\cD{\mathcal{D}}
\def\cE{\mathcal{E}}

\def\cG{\mathcal{G}}

\def\cI{\mathcal{I}}

\def\cK{\mathcal{K}}
\def\cL{\mathcal{L}}

\def\cN{\mathcal{N}}
\def\cO{\mathcal{O}}
\def\cP{\mathcal{P}}

\def\cR{\mathcal{R}}
\def\cS{\mathcal{S}}

\def\cX{\mathcal{X}}
\def\cY{\mathcal{Y}}

\def\smskip{\smallskip}

\def\texitem#1{\par\smskip\noindent\hangindent 25pt
               \hbox to 25pt {\hss #1 ~}\ignorespaces}


\def\norm#1{\left\|#1\right\|}

\newcommand{\BEAS}{\begin{eqnarray*}}
\newcommand{\EEAS}{\end{eqnarray*}}
\newcommand{\BEA}{\begin{eqnarray}}
\newcommand{\EEA}{\end{eqnarray}}
\newcommand{\BEQ}{\begin{eqnarray}}
\newcommand{\EEQ}{\end{eqnarray}}
\newcommand{\BIT}{\begin{itemize}}
\newcommand{\EIT}{\end{itemize}}
\newcommand{\BNUM}{\begin{enumerate}}
\newcommand{\ENUM}{\end{enumerate}}

\newcommand{\BA}{\begin{array}}
\newcommand{\EA}{\end{array}}


\newcommand{\reals}{\mathbb{R}}
\newcommand{\integers}{\mathbb{Z}}




\newcommand{\diag}{\mathop{\bf diag}}




\newcommand{\dom}{\mathop{\bf dom}}

\newcommand{\intr}{\mathop{\bf int}}
\newcommand{\relint}{\mathop{\bf rel int}}






\newif\ifpagenumbering
\pagenumberingtrue

\pagenumberingfalse

%
%
\newsavebox{\theorembox}
\newsavebox{\lemmabox}
\newsavebox{\defnbox}
\newsavebox{\corollarybox}
\newsavebox{\remarkbox}
\newsavebox{\assbox}
\savebox{\theorembox}{\noindent\bf Theorem}
\savebox{\lemmabox}{\noindent\bf Lemma}
\savebox{\defnbox}{\noindent\bf Definition}
\savebox{\corollarybox}{\noindent\bf Corollary}
\savebox{\remarkbox}{\noindent\bf Remark}
\savebox{\assbox}{\noindent\bf Assumption}
\newtheorem{remark}{\usebox{\remarkbox}}[section]
\newtheorem{defn}{\usebox{\defnbox}}






\def\fprod#1{\left\langle#1\right\rangle}
\def\prox#1{\mathbf{prox}_{#1}}
\def\ind#1{\mathds{1}_{#1}}
\def\T{\mathsf{T}}
\def\id{\mathbf{I}}
\def\zero{\mathbf{0}}
\def\one{\mathbf{1}}

\def\btheta{\boldsymbol{\theta}}
\def\blambda{\boldsymbol{\lambda}}
\def\bmu{\boldsymbol{\mu}}
\def\bxi{\boldsymbol{\xi}}

\title{A primal-dual method for conic constrained distributed optimization problems}
\begin{document}
\author{
Necdet Serhat Aybat\\
Department of Industrial Engineering\\
  Penn State University\\
  University Park, PA 16802 \\
  \texttt{nsa10@psu.edu} \\
\And
Erfan Yazdandoost Hamedani\\
Department of Industrial Engineering\\
  Penn State University\\
  University Park, PA 16802 \\
  \texttt{evy5047@psu.edu}
}
\maketitle
\begin{abstract}
We consider cooperative multi-agent consensus optimization problems over an undirected network of agents, where only those agents connected by an edge can directly communicate. The objective is to minimize the sum of agent-specific composite convex functions over agent-specific private conic constraint sets; hence, the optimal consensus decision should lie in the intersection of these private sets. We provide convergence rates both in sub-optimality, infeasibility and consensus violation; examine the effect of underlying network topology on the convergence rates of the proposed decentralized algorithms; and show how to extend these methods to handle time-varying communications networks and to solve problems with resource sharing constraints.
\end{abstract}
\section{Introduction}
Let $\cG=(\cN,\cE)$ denote a \emph{connected} undirected graph of $N$ computing nodes, where $\cN\triangleq\{1,\ldots,N\}$ and $\mathcal{E}\subseteq \mathcal{N}\times \mathcal{N}$ denotes the set of edges -- without loss of generality assume that $(i,j)\in \mathcal{E}$ implies $i< j$. Suppose nodes $i$ and $j$ can exchange information only if $(i,j) \in \cE$, and each node $i\in\cN$ has a \emph{private} (local) cost function $\Phi_i:\reals^n\rightarrow\reals\cup\{+\infty\}$ such that
{\small
\begin{equation}
\label{eq:F_i}
\Phi_i(x)\triangleq \rho_i(x) + f_i(x),
\end{equation}
}%
where $\rho_i: \mathbb{R}^n \rightarrow \mathbb{R}\cup\{+\infty\}$ is a possibly \emph{non-smooth} convex function, and $f_i: \mathbb{R}^n \rightarrow \mathbb{R}$ is a \emph{smooth} convex function. We assume that $f_i$ is differentiable on an open set containing $\dom \rho_i$ with a Lipschitz continuous gradient $\grad f_i$, of which Lipschitz constant is $L_i$; and the prox map of $\rho_i$, 
{\small
\begin{equation}
\label{eq:prox}
\prox{\rho_i}(x)\triangleq\argmin_{y \in \reals^n} \left\{ \rho_i(y)+\tfrac{1}{2}\norm{y-x}^2 \right\},
\end{equation}
}%
is \emph{efficiently} computable for $i\in\cN$, where $\norm{.}$ denotes the Euclidean norm. Let $\mathcal{O}_i\triangleq\{j\in\mathcal{N} : (i, j) \in \mathcal{E} \text{ or } (j, i) \in \mathcal{E}\}$ denote the set of neighboring nodes of $i \in \mathcal{N}$, and $d_i\triangleq |\mathcal{O}_i|$ is the degree of node $i\in \mathcal{N}$; we also define $\mathcal{N}_i\triangleq \mathcal{O}_i\cup \{i\}$. Consider the following minimization problem:
{\small
\begin{align}\label{eq:central_problem}
\min_{x\in\reals^n}\ \sum_{i\in \mathcal{N}}\Phi_i(x)\quad \hbox{s.t.}\quad A_ix -b_i \in \mathcal{K}_i,\quad \forall{i}\in\mathcal{N},
\end{align}
}%
\noindent where $A_i\in \mathbb{R}^{m_i\times n}$, $b_i\in \mathbb{R}^{m_i}$ and $\mathcal{K}_i\subseteq{R}^{m_i}$ is a closed, convex cone. Suppose that projections onto $\cK_i$ can be computed efficiently, while the projection onto the preimage $A_i^{-1}(\cK_i+b_i)$ is assumed to be \emph{impractical}, e.g., when $\cK_i$ is the positive semidefinite cone, projection to preimage requires solving an SDP. Our objective is to solve \eqref{eq:central_problem} in a decentralized fashion using the computing nodes $\cN$ and exchanging information only along the edges $\cE$. In Section~\ref{sec:static} and Section~\ref{sec:dynamic}, we consider \eqref{eq:central_problem} when the topology of the connectivity graph is \emph{static} and \emph{time-varying}, respectively. In Section~\ref{sec:dual}, we show that \emph{resource allocation} type problems of the following form
{\small
\begin{align}\label{eq:dual-implement-intro}
\min_{\{\xi_i\}_{i\in\cN}}\ \sum_{i\in \cN}{\Phi_i(x,\xi_i)}\quad\hbox{s.t.}\quad \sum_{i\in\cN}Q_i x+R_i\xi_i-r_i \in \cK,
\end{align}}%
can be handled in a similar way using both primal and dual consensus iterations at the same time, where $\cK\subseteq \reals^m$ is a closed convex cone, $R_i\in\reals^{m\times n_i}$, $Q_i\in\reals^{m\times n}$, and $r_i\in \reals^{m}$ are the problem data such that each node $i\in\cN$ only have access to $R_i$, $r_i$ and $\cK$ along with its objective $\Phi_i(x,\xi_i)\triangleq\rho(x,\xi_i)+f(x,\xi_i)$ defined similarly as in \eqref{eq:F_i}.

This computational setting, i.e., decentralized consensus optimization, appears as a generic model for various applications in signal processing, e.g., \cite{ling2010decentralized,schizas2008consensus}, machine learning, e.g.,~\cite{forero2010consensus,mcdonald2010distributed,yan2013distributed} and statistical inference, e.g.,~\cite{mateos2010distributed}. Clearly, \eqref{eq:central_problem} and \eqref{eq:dual-implement-intro} can also be solved in a ``centralized" fashion by communicating all the private functions $\Phi_i$ to a \emph{central} node, and solving the overall problem at this
node. However, such an approach can be very expensive both from communication
and computation perspectives when compared to the distributed algorithms which are far more scalable to increasing problem data and network sizes. In particular, suppose $(A_i,b_i) \in \reals^{m \times (n+1)}$ and $\Phi_i(x) = \norm{A_i x - b_i}_2^2 +\lambda \norm{x}_1$ for some given $\lambda>0$ for $i\in\cN$ such that $m\ll n$ and $N\gg 1$. Hence, \eqref{eq:central_problem} is a very large scale LASSO problem with \emph{distributed} data. 
To solve~\eqref{eq:central_problem} in a centralized fashion, the data
$\{(A_i,b_i): i \in \cN\}$ needs to be communicated to the central node. This can be
prohibitively expensive, and may also violate privacy constraints -- in case some node $i$
 does not want to reveal the details of its \emph{private} data.
Furthermore, it requires that the central node has large enough memory to
be able to accommodate all the data. On the other hand, at the expense of slower
convergence, one can completely do away with a central node, and seek for
\emph{consensus} among all the nodes on an optimal decision
using ``local" decisions communicated by the neighboring nodes. From computational
 perspective, for certain cases, computing partial gradients \emph{locally} 
can be more computationally efficient
when compared to computing the entire gradient at a central node. 
These
considerations in mind, we
propose decentralized algorithms that can compute
solutions to~\eqref{eq:central_problem} and \eqref{eq:dual-implement-intro} using only local computations without explicitly requiring
the nodes to communicate the functions $\{\Phi_i: i \in \cN\}$; thereby,
circumventing all privacy, communication and memory issues. Examples of \emph{constrained} machine learning problems that fit into our framework include multiple kernel learning~\cite{bach2004multiple}, and primal linear support vector machine~(SVM) problems. In the numerical section we implemented the proposed algorithms on the primal SVM problem.
\subsection{Previous Work}
There has been active research~\cite{nedic2009subgradient,chambolle2011first,he2012convergence,chambolle2015ergodic,chen2014optimal} on solving convex-concave saddle point problems $\min_x\max_y\cL(x,y)$. In \cite{nedic2009subgradient} a subgradient method is proposed for computing a saddle point for $\cL(x,y)$, where $(x^*,y^*)$ is called a saddle point if $\cL(x^*,y)\leq\cL(x^*,y^*)\leq \cL(x,y^*)$ for all $x,y$. Assuming that subgradients and primal-dual iterates generated by their method are uniformly bounded, the authors showed that $|\cL(\bar{x}^k,\bar{y}^k)-\cL(x^*,y^*)|\leq\cO(\frac{1}{\alpha k})+\alpha L^2$,
where $\{(\bar{x}^k,\bar{y}^k)\}$ denotes the  primal-dual ergodic average sequence, $\alpha$ is the constant step-size and $L$ is a uniform bound on the norm of generated subgradients. Note that due to constant error level $\alpha L^2$, setting $\alpha=\cO(\epsilon)$, one can compute an approximate saddle point with $\epsilon$ error in function values within $\cO(1/\epsilon^2)$ iterations; that being said, further iterations won't improve the quality due to constant step size. They also showed that under Slater's condition, the boundedness assumption on iterate sequence can be relaxed, and the method can inexactly solve constrained convex optimization problems with guarantees on the amount of feasibility violation and on the primal objective function values at approximate solutions. 
In~\cite{chambolle2011first} primal-dual proximal algorithms are proposed for convex-concave problems with known saddle-point structure $\min_x\max_y \cL_s(x,y)\triangleq\Phi(x)+\fprod{Tx,y}-h(y)$, where $\Phi$ and $h$ are convex functions, and $T$ is a linear map. These algorithms converge with rate $\mathcal{O}(1/k)$ for the primal-dual gap, 
and they can be modified to yield a convergence rate of $\mathcal{O}(1/k^2)$ when either $\Phi$ or $h$ is strongly convex, and $\mathcal{O}(1/e^k )$ linear rate, when both $\Phi$ and $h$ are strongly convex. In~\cite{he2012convergence} a primal-dual contraction method is presented which simplifies the existing convergence analysis of primal-dual methods for structured saddle point problems as in~\cite{chambolle2011first}. Moreover, in a similar setting to~\cite{chambolle2011first}, when $\Phi$ is smooth and $\grad \Phi$ is Lipschitz continuous with constant $L$, the primal-dual method proposed in~\cite{chen2014optimal} yields an ``optimal" rate of $\cO(L/k^2+\norm{T}/k)$. More recently, in~\cite{chambolle2015ergodic} Chambolle and Pock extend their previous work in~\cite{chambolle2011first}, using simpler proofs, to handle composite convex primal functions $\Phi(x)=\rho(x)+f(x)$, and to deal with proximity operators based on Bregman distance functions, where $\rho$ and $f$ are both convex and $f$ is smooth. In various other papers, e.g., \cite{chen2013primal,condat2013primal}, primal-dual approaches for solving convex minimization problems containing both smooth and nonsmooth functions were studied as well. 

Consider $\min_{x\in\reals^n}\{\sum_{i\in\cN}\Phi_i(x):\ x\in\cap_{i\in\cN}\cX_i\}$ over $\cG=(\cN,\cE)$.  Although the \emph{unconstrained} consensus optimization, i.e., $\cX_i=\reals^n$, is well studied -- see~\cite{Nedic08_1J,nedic2014distributed} and the references therein,
the \emph{constrained} case 
is still an immature, and recently developing area of active research
\cite{Nedic08_1J,lee2013optimization,nedic2014distributed,srivastava2012distributed,chang2014distributed,lee2013distributed,mateos2015distributed,yuan2015regularized,yuan2011distributed,zhu2012distributed,nedic2010constrained,srivastava2010distributed,liu2016collective}.
 Other than few exceptions~\cite{chang2014distributed,lee2013distributed,mateos2015distributed,yuan2015regularized,yuan2011distributed,zhu2012distributed}, the methods in literature require that each node compute a projection on the privately known set $\cX_i$ in addition to consensus and (sub)gradient steps, e.g., \cite{srivastava2012distributed,nedic2010constrained,srivastava2010distributed}. Moreover, among those few exceptions that do not use projections onto $\cX_i$ when $\Pi_{\cX_i}$ is not easy to compute, only \cite{chang2014distributed,mateos2015distributed} can handle agent-specific constraints without assuming global knowledge of the constraints by all agents. However, \emph{no} rate results in terms of suboptimality, local infeasibility, and consensus violation exist for the primal-dual distributed methods in~\cite{chang2014distributed,mateos2015distributed} when implemented for the agent-specific conic constraint sets $\cX_i=\{x:A_ix-b_i\in \cK_i\}$ studied in this paper. To sum up, there is a pressing need for new computational approaches to reach a consensus among the distributed computing nodes on an optimal decision when there are \emph{private} node-specific constraints.

In~\cite{nedic2010constrained} authors considered minimizing sum of privately known convex functions, $f_i$, over the intersection of node specific, privately known convex sets $\cX_i$, i.e., $(P):\ \min_x\{\sum_{i\in\cN}f_i(x):\ x\in\bigcap_{i\in\cN}\cX_i\}$. They proposed a distributed projected subgradient algorithm, and analyzed its convergence considering two different scenarios: i) the connectivity graph is time-varying, all local constraint sets are the same, i.e., $\cX_i=\cX$ for $i\in\cN$, and subdifferential of each $f_i$ is uniformly bounded on $\cX$; ii) the communication graph is static and it is fully connected, $\cX_i$'s are compact, and may be different at each node. They showed that the generated iterates converge to an optimal solution without providing any rate result. In~\cite{srivastava2010distributed}, the authors considered the constrained consensus optimization problem (P) as in~\cite{nedic2010constrained}, over time-varying communication networks. Assuming $\cX_i$ is compact for $i\in\cN$, it is shown that the proposed projected subgradient method using square summable but not summable step-size sequence converges almost surely even when there is noise in communication links, and subgradient evaluations are corrupted with bounded stochastic error.
Note that both algorithms~\cite{nedic2010constrained,srivastava2010distributed} require computing projections onto the local sets, and they may not be implemented efficiently if the local sets do not assume simple projections, which is the case considered in our paper. In order to alleviate this issue to some extend, the distributed random projection (DRP) algorithm was presented in \cite{lee2013distributed} for solving (P) when $\cX_i=\bigcap_{j\in\cI_i}\cX_i^j$, where for each $i\in\cN$, $\cX_i^j$ is convex for all $j$ in a finite index set $\cI_i$. Suppose that computing the projection onto any one of the components, $\cX_i^j$, is easy, even though the projection onto whole local constraint set $\cX_i$ might still be computationally expensive. At $k$-th iteration, for each node $i\in\cN$, after the subgradient is computed, the algorithm computes a projection on a random component $\cX_i^{\Omega_i(k)}$ of the local set $\cX_i$, where $\Omega_i(k)\in\cI_i$ is random set process satisfying certain assumptions. Assuming that each local objective function $f_i$ is Lipschitz-differentiable with bounded gradients over $\cX_i$, they proved that for square summable but not summable step-sizes, the generated iterates converge to an optimal solution almost surely. In addition, asynchronous gossip-based random projection algorithm was proposed in~\cite{lee2013optimization} which uses gossip type communication and local computation.

Next, we will briefly review some important work on constrained distributed optimization which does not heavily rely on computing local projections. Authors in~\cite{zhu2012distributed} considered consensus optimization problems where the consensus decision should lie in the intersection of some privately known convex sets, i.e., $\cX=\bigcap_{i\in\cN}\cX_i$, such that $\cX$ is compact, and should satisfy the \emph{globally} known convex inequality and equality constraints -- all the local functions defining the objective and constraints are merely convex. Assuming a Slater point exists, they proposed two distributed primal-dual subgradient algorithms using square summable but non-summable step-size sequence: one algorithm is designed for the case where the equality constraint is absent, and the other one for the case where the local constraint sets are identical. For time-varying communication network topology, convergence of the proposed algorithms is shown, but without providing any rate result.
In \cite{yuan2011distributed}, the distributed primal-dual subgradient method~(DPDSM) is proposed for minimizing the sum of local convex functions subject to \emph{globally known} constraints without assuming differentiability. DPDSM can compute approximate saddle points of the Lagrangian function when the connectivity network topology is static. The authors show that it can find the optimal value within error level $\cO(\alpha)$ depending on the constant step-size $\alpha$ chosen. More specifically, DPDSM is a multi-step consensus method, i.e., in each iteration neighboring nodes exchange information multiple times; given an error level $\epsilon$, choosing step-size $\alpha=\cO(\epsilon)$ and assuming that the number of consensus steps per iteration is sufficiently large, the convergence rate in terms of suboptimality and infeasibility is $\mathcal{O}(\frac{1}{\alpha k}+ C\alpha)$ in the ergodic sense for some constant $C$, and consensus violation is also $\cO(\alpha)$. 
The number of consensus steps per iteration depends on connectivity of the graph, i.e., if the network graph has weak connectivity, then the algorithm converges very slowly. 
In a follow-up paper~\cite{yuan2015regularized}, the authors consider minimizing sum of local convex functions subject to a \emph{globally} known inequality constraint defined by a (possibly non-smooth) convex function $c:\reals^n\rightarrow\reals$, i.e., $\min_x\sum_{i\in\cN}f_i(x)\ \hbox{s.t.}\ x\in\cX=\{x:\ c(x)\leq 0\}$, but this time over a time-varying communication network. They assumed that i) $\cX$ is contained in a ball $\cB$; ii) the subdifferentials $\partial f_i(x)$ and $\partial c(x)$ are uniformly bounded for all $x\in\cB$, iii) $|c(x)|$ is bounded over $\cB$,
iv) $\min_x\{\|g_c(x)\|_2:\ c(x)=0,\ g_c(x)\in\partial c(x)\}\geq \rho$ for some constant $\rho>0$. This method does not require computing projections onto $\cX$ in any iteration, but the last one -- it avoids these projections by appropriately regularizing the Lagrangian function depending on the infeasibility of the iterates, and it computes only one projection onto $\cX$ at the very last iteration to achieve feasibility with respect to $\cX$, while consensus may still be violated. For this distributed regularized primal-dual subgradient method, it is shown that the suboptimality and consensus violation decrease with $O(1/k^{1 \over 4})$ and with $\cO(1/k^{1 \over 2})$ rates, respectively, where $k$ is the consensus iteration counter. 

In~\cite{chang2014distributed}, a general setting for constrained distributed optimization in a time-varying network topology has been considered where the objective is to minimize a composition of a global network function (smooth) with the summation of local objective functions (smooth), subject to inequality constraints on the summation of agent specific constrained functions and local compact sets. They proposed a consensus-based distributed primal-dual perturbation (PDP) algorithm using a square summable but not summable step-size sequence, and showed that the local primal-dual iterates converges to a global optimal primal-dual solution; however, no rate result was provided. The proposed PDP method can also handle non-smooth constraints with similar convergence guarantees. Finally, while we were preparing this paper, we became aware of a very recent work~\cite{mateos2015distributed} related to ours in certain ways: i) Fenchel conjugation and Laplacian averaging play key roles in both papers, ii) dual consensus is used to decompose separable constraints (see Section~\ref{sec:dual} in our paper). The authors proposed a distributed algorithm on time-varying communication network for solving saddle-point problems subject to consensus constraints. The algorithm can also be applied to solve consensus optimization problems with inequality constraints that can be written as summation of local convex functions of local and global variables. 
Assuming i) each agent's local variable lies in locally known compact convex set, and the global variable lies in a globally known compact convex set, ii) a ball centered at the origin is given such that it contains the optimal dual solution set, iii) subgradients generated by the algorithm are bounded, it is shown that using a carefully selected decreasing step-size sequence, the ergodic average of primal-dual sequence converges with $\mathcal{O}(1/\sqrt{k})$ rate in terms of saddle-point evaluation error; however, when applied to constrained optimization problems, no rate in terms of suboptimality and infeasibility is provided. 


Before concluding this section, we would like to mention a related work in unconstrained consensus optimization which inspired the analysis in our paper. Authors in~\cite{chen2012fast} worked on an \emph{unconstrained} consensus optimization problem over network with time-varying  topology. The objective function is the summation of local convex composite functions such that each local function is of the form $\Phi_i=\rho+f_i$ as in \eqref{eq:F_i}. They assumed that non-smooth function $\rho$ is common to all nodes and smooth functions, $f_i$, have bounded gradients. They proved that their proposed inexact proximal-gradient method, which exploits the function structure, can compute $\epsilon$-feasible and $\epsilon$-optimal solution in $\mathcal{O}(1/\sqrt{\epsilon})$ iterations which require $k$ communication rounds with neighbors during the $k$-th iteration, hence; hence, leading to $\mathcal{O}(1/\epsilon)$ total number of communication rounds in total. That said, there are many practical problems where nodes in the network have different non-smooth components in their objective and/or have node specific constraints. These important concerns motivated our paper, and it is shown that the number of communication rounds per iteration can be reduced to $\sqrt[p]{k}$ per node for some $p\geq 1$, leading to $\cO(1/\epsilon^{1+{1 \over p}})$ rate for a more general class of problems given in \eqref{eq:central_problem} and \eqref{eq:dual-implement-intro}. 
%
%

\textbf{Contribution.} We propose primal-dual algorithms for distributed optimization subject to agent specific conic constraints and/or global conic constraints with separable local components. By assuming composite convex structure on the primal functions, we show that our proposed algorithms converge with $\mathcal{O}(1/{k})$ rate where $k$ is the number of consensus iterations. To the best of our knowledge, this is the best rate result for our setting. Indeed, $\epsilon$-optimal and $\epsilon$-feasible solution can be computed within $\cO(1/\epsilon)$ consensus iterations for the static topology, and within $\cO(1/\epsilon^{1+\nicefrac{1}{p}})$ consensus iterations for the dynamic topology for any rational $p\geq 1$, although $\cO(1)$ constant gets larger for large $p$. Moreover, these methods are fully distributed, i.e., the agents are \emph{not} required to know any global parameter depending on the entire network topology, e.g., the second smallest eigenvalue of the Laplacian; instead, we only assume that agents know who their neighbors are. 
\subsection{Preliminary}
Let $\cX$ and $\cY$ be finite-dimensional vector spaces. In a recent paper, Chambolle and Pock~\cite{chambolle2015ergodic} proposed a primal-dual algorithm~(PDA) for the following convex-concave saddle-point problem:
{\small
\begin{align}
\label{eq:saddle-problem}
\min_{\bx\in\cX}\max_{\by\in\cY}\cL(\bx,\by)\triangleq\Phi(\bx)+\fprod{T\bx,\by}-h(\by),\quad \hbox{\normalsize where}\quad \Phi(\bx)\triangleq\rho(\bx)+f(\bx),
\end{align}
}%
where $\rho$ and $h$ are possibly non-smooth convex functions, $f$ is a convex function and has a Lipschitz continuous gradient defined on $\dom \rho$ with constant $L$. Briefly, given $\bx^0,\by^0$ and algorithm parameters $\nu_x,\nu_y>0$, PDA consists of two proximal-gradient steps: 
{\small
\begin{align}
\bx^{k+1}&\gets\argmin_{\bx} \rho(\bx)+f(\bx^k)+\fprod{\grad f(\bx^k),~\bx-\bx^k}+\fprod{T\bx,\by^k}+\frac{1}{\nu_x}D_x(\bx,\bx^k)
\label{eq:PDA-x}\\
\by^{k+1}&\gets\argmin_{\by} h(\by)-\fprod{T(2\bx^{k+1}-\bx^k),\by}+\frac{1}{\nu_y}D_y(\by,\by^k),
\label{eq:PDA-y}
\end{align}
}%
where $D_x$ and $D_y$ are Bregman distance functions corresponding to some continuously differentiable strongly convex functions $\psi_x$ and $\psi_y$ such that $\dom \psi_x\supset\dom\rho$ and $\dom \psi_y\supset\dom h$. In particular, $D_x(\bx,\bar{\bx})\triangleq\psi_x(\bx)-\psi_x(\bar{\bx})-\fprod{\grad \psi_x(\bar{\bx}),~\bx-\bar{\bx}}$, and $D_y$ is defined similarly. In~\cite{chambolle2015ergodic}, a simple proof for the ergodic convergence is provided; indeed, it is shown that, when the convexity modulus for $\psi_x$ and $\psi_y$ is 1, if $\tau,\kappa>0$ are chosen such that $(\frac{1}{\nu_x}-L)\frac{1}{\nu_y}\geq\sigma^2_{\max}(T)$, then
{\small
\begin{equation}
\label{eq:DPA-rate}
\cL(\bar{\bx}^K,\by)-\cL(\bx,\bar{\by}^K)\leq\frac{1}{K}\left(\frac{1}{\nu_x}D_x(\bx,\bx^0)+\frac{1}{\nu_y}D_y(\by,\by^0)-\fprod{T(\bx-\bx^0),\by-\by^0}\right),
\end{equation}
}%
for all $\bx,\by\in\cX\times\cY$, where $\bar{\bx}^K\triangleq\frac{1}{K}\sum_{k=1}^K \bx^k$ and $\bar{\by}^K\triangleq\frac{1}{K}\sum_{k=1}^K \by^k$.

First, we define the notations 
used throughout the paper. Next, in Theorem~\ref{thm:general-rate}, we discuss a special case of \eqref{eq:saddle-problem}, which will help us prove the main results of this paper, and also allow us to develop decentralized algorithms for the consensus optimization problem in~\eqref{eq:central_problem}. The proposed algorithms in this paper can distribute the computation over the nodes such that each node's computation is based on the local topology of $\cG$ and the private information only available to that node.

\textbf{Notations.} Throughout the paper, $\norm{.}$ denotes the Euclidean norm. Given a convex set $\cS$, let $\sigma_{\cS}(.)$ denote its support function, i.e., $\sigma_{\cS}(\theta)\triangleq\sup_{w\in \cS}\fprod{\theta,~w}$, let $\ind{S}(\cdot)$ denote the indicator function of $\cS$, i.e., $\ind{S}(w)=0$ for $w\in\cS$ and equal to $+\infty$ otherwise, and let $\cP_{\cS}(w)\triangleq\argmin\{\norm{v-w}:\ v\in\cS\}$ denote the projection onto $\cS$. For a closed convex set $\cS$, we define the distance function as $d_{\cS}(w)\triangleq\norm{\cP_{\cS}(w)-w}$. Given a convex cone $\cK\in\reals^m$, let $\mathcal{K}^*$ denote its dual cone, i.e., $\mathcal{K}^*\triangleq\{\theta\in\reals^{m}: \ \langle \theta,w\rangle \geq 0\ \ \forall w\in\mathcal{K}\}$, and $\cK^\circ\triangleq -\cK^*$ denote the polar cone of $\cK$. Note that for a given cone $\cK\in\reals^m$, $\sigma_{\cK}(\theta)=0$ for $\theta\in\cK^\circ$ and equal to $+\infty$ if $\theta\not\in\cK^\circ$, i.e., $\sigma_{\cK}(\theta)=\ind{\cK^\circ}(\theta)$ for all $\theta\in\reals^m$.
Cone $\cK$ is called \emph{proper} if it is closed, convex, pointed, and it has a nonempty interior. Given a convex function $g:\reals^n\rightarrow\reals\cup\{+\infty\}$, its convex conjugate is defined as $g^*(w)\triangleq\sup_{\theta\in\reals^n}\fprod{w,\theta}-g(\theta)$. $\otimes$ denotes the Kronecker product, and $\id_n$ is the $n\times n$ identity matrix.

\begin{defn}
\label{def:bregman}
Let $\cX\triangleq\Pi_{i\in\cN}\reals^n$ and $\cX\ni\bx=[x_i]_{i\in\cN}$; and $\cY\triangleq\Pi_{i\in\cN}\reals^{m_i}\times\reals^{m_0}$,  $\cY\ni\by=[\btheta^\top \blambda^\top]^\top$ and $\btheta=[\theta_i]_{i\in\cN}\in\reals^m$, where $m\triangleq\sum_{i\in\cN}{m_i}$, and $\Pi$ denotes the Cartesian product. Given parameters $\gamma>0$, $\kappa_i,\tau_i>0$ for $i\in\cN$, let $\mathbf{D}_\gamma\triangleq\frac{1}{\gamma}\id_{m_0}$, $\mathbf{D}_\kappa\triangleq\diag([\frac{1}{\kappa_i}\id_{m_i}]_{i\in\cN})$, and $\mathbf{D}_\tau\triangleq\diag([\frac{1}{\tau_i}\id_n]_{i\in\cN})$. Defining $\psi_x(\bx)\triangleq\frac{1}{2}\bx^\top\mathbf{D}_\tau\bx$ and $\psi_y(\by)\triangleq\frac{1}{2}\btheta^\top\mathbf{D}_\kappa\btheta+\frac{1}{2}\blambda^\top\mathbf{D}_\gamma\blambda$ leads to the following Bregman distance functions: $D_x(\bx,\bar{\bx})=\frac{1}{2}\norm{\bx-\bar{\bx}}_{\mathbf{D}_\tau}^2$, and $D_y(\by,\bar{\by})=\frac{1}{2}\norm{\btheta-\bar{\btheta}}_{\mathbf{D}_\kappa}^2+\frac{1}{2}\norm{\blambda-\bar{\blambda}}_{\mathbf{D}_\gamma}^2$, where the $Q$-norm is defined as $\norm{z}_Q\triangleq (z^\top Q z)^{\tfrac{1}{2}}$ for 
$Q\succ 0$.
\end{defn}
\begin{theorem}
\label{thm:general-rate}
Let $\cX$, $\cY$, and Bregman functions $D_x$, $D_y$ are defined as in Definition~\ref{def:bregman}. Suppose $\Phi(\bx)\triangleq\sum_{i\in\cN}\Phi_i(x_i)$, and $h(\by)\triangleq h_0(\blambda)+\sum_{i\in\cN}h_i(\theta_i)$, where $\{\Phi_i\}_{i\in\cN}$ are composite convex functions defined as in~\eqref{eq:F_i}, and $\{h_i\}_{i\in\cN}$ are closed convex functions with simple prox-maps. Given $A_0\in\reals^{m_0\times n|\cN|}$ and $\{A_i\}_{i\in\cN}$ such that $A_i\in\reals^{m_i\times n}$, let $T=[A^\top~ A_0^\top]^\top$, where $A\triangleq\diag([A_i]_{i\in\mathcal{N}})\in\reals^{m \times n|\cN|}$ is a block-diagonal matrix. Given the initial point $(\bx^0,\by^0)$, the PDA iterate sequence $\{\bx^k,\by^k\}_{k\geq 1}$, generated according to \eqref{eq:PDA-x} and \eqref{eq:PDA-y} when $\nu_x=\nu_y=1$, satisfies \eqref{eq:DPA-rate} for all $K\geq 1$ if
{\small $\mathbf{\bar{Q}}(A,A_0)\triangleq\begin{bmatrix}
\mathbf{\bar{D}}_\tau &-A^\top& -A_0^\top\\
-A & \mathbf{D}_\kappa & 0\\
-A_0 & 0 & \mathbf{D}_\gamma
\end{bmatrix}\succeq 0$}, where $\mathbf{\bar{D}}_\tau\triangleq\mathbf{D}_\tau-\diag([L_i\id_n]_{i\in\cN})=\diag([({1\over \tau_i} -L_i)\id_{n}]_{i\in \mathcal{N}})$.
\end{theorem}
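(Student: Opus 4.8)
The plan is to reduce Theorem~\ref{thm:general-rate} to the general ergodic convergence result \eqref{eq:DPA-rate} established in~\cite{chambolle2015ergodic} by verifying that, under the stated positive semidefiniteness condition on $\mathbf{\bar Q}(A,A_0)$, the separable choices of $\Phi$, $h$, $T$, $D_x$, $D_y$ in Definition~\ref{def:bregman} indeed satisfy the hypotheses needed for \eqref{eq:DPA-rate} to hold with $\nu_x=\nu_y=1$. The point is that \eqref{eq:DPA-rate} in~\cite{chambolle2015ergodic} is stated under a scalar step-size condition $(\tfrac{1}{\nu_x}-L)\tfrac{1}{\nu_y}\ge\sigma_{\max}^2(T)$, whereas here the Bregman functions are \emph{block-weighted} quadratics with different constants $\tau_i,\kappa_i,\gamma$ per block, and the smoothness constants $L_i$ differ across blocks. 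So the real content is to re-derive the one-step descent inequality underlying \eqref{eq:DPA-rate} in the weighted-norm setting and identify the exact matrix inequality that makes the telescoping argument go through; that matrix inequality is precisely $\mathbf{\bar Q}(A,A_0)\succeq 0$.

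First I would write out the two proximal-gradient updates \eqref{eq:PDA-x}--\eqref{eq:PDA-y} for the separable data: since $\Phi(\bx)=\sum_i\Phi_i(x_i)$, $D_x(\bx,\bar\bx)=\tfrac12\|\bx-\bar\bx\|_{\mathbf D_\tau}^2$, and $T\bx=[(A_ix_i)_{i\in\cN}^\top,\,(A_0\bx)^\top]^\top$ with $h(\by)=h_0(\blambda)+\sum_i h_i(\theta_i)$, the $\bx$-update decouples into $N$ independent prox-gradient steps (one per node, using the local Lipschitz constant $L_i$ and local step $\tau_i$), and the $\by$-update decouples into the $\theta_i$-blocks and the $\blambda$-block. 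This shows the iteration is implementable in the stated distributed form, but more importantly it lets me invoke the standard prox inequality for each block. From the optimality conditions of \eqref{eq:PDA-x}, convexity of $\rho_i$, and $L_i$-smoothness of $f_i$ (the descent lemma), I get for each $i$ and any $x_i$
\[
\Phi_i(x_i^{k+1})-\Phi_i(x_i)+\fprod{A_ix_i^{k+1}-A_ix_i,\theta_i^k}\le \tfrac12\|x_i-x_i^k\|_{\tfrac1{\tau_i}\id}^2-\tfrac12\|x_i-x_i^{k+1}\|_{\tfrac1{\tau_i}\id}^2-\tfrac12\|x_i^{k+1}-x_i^k\|_{(\tfrac1{\tau_i}-L_i)\id}^2,
\]
and similarly for the $\by$-blocks (no smooth part there, so $L=0$) I get the analogous three-term identity in the $\mathbf D_\kappa$- and $\mathbf D_\gamma$-norms, with the extrapolated point $2\bx^{k+1}-\bx^k$ appearing in the bilinear term. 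Summing over $i$, adding the primal and dual inequalities, and using the identity $\fprod{T(2\bx^{k+1}-\bx^k)-T\bx,\by^{k+1}-\by}-\fprod{T\bx^{k+1}-T\bx,\by^{k+1}}=\fprod{T(\bx^{k+1}-\bx^k),\by^{k+1}-\by}-\fprod{T(\bx^{k+1}-\bx),\by^k}+\dots$ (the usual Chambolle--Pock regrouping) collapses the cross terms into $\fprod{T(\bx^{k+1}-\bx^k),\by^{k+1}-\by^k}$ plus a telescoping piece $-\fprod{T(\bx^{k+1}-\bx),\by^{k+1}-\by}+\fprod{T(\bx^k-\bx),\by^k-\by}$.

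The crux is then to show that the per-iteration ``residual'' terms combine into something nonnegative-dominated: after the regrouping, the leftover quadratic in $(\bx^{k+1}-\bx^k,\,\by^{k+1}-\by^k)$ is exactly
\[
\tfrac12\begin{bmatrix}\bx^{k+1}-\bx^k\\ \btheta^{k+1}-\btheta^k\\ \blambda^{k+1}-\blambda^k\end{bmatrix}^{\!\top}\!\!\begin{bmatrix}\mathbf{\bar D}_\tau & -A^\top & -A_0^\top\\ -A & \mathbf D_\kappa & 0\\ -A_0 & 0 & \mathbf D_\gamma\end{bmatrix}\!\begin{bmatrix}\bx^{k+1}-\bx^k\\ \btheta^{k+1}-\btheta^k\\ \blambda^{k+1}-\blambda^k\end{bmatrix}\ge 0,
\]
which is where the hypothesis $\mathbf{\bar Q}(A,A_0)\succeq 0$ is used and which, by the standard Schur-complement computation, is the weighted-norm analogue of $(\tfrac1{\nu_x}-L)\tfrac1{\nu_y}\ge\sigma_{\max}^2(T)$. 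Dropping this nonnegative term, summing the resulting inequality over $k=0,\dots,K-1$, telescoping the $D_x,D_y$ and the bilinear terms, dividing by $K$, and invoking convexity of $\cL(\cdot,\by)$ and concavity of $\cL(\bx,\cdot)$ together with Jensen's inequality applied to the ergodic averages $\bar\bx^K,\bar\by^K$ yields exactly \eqref{eq:DPA-rate}. The main obstacle I anticipate is bookkeeping rather than conceptual: getting the extrapolation/regrouping algebra right so that precisely the matrix $\mathbf{\bar Q}(A,A_0)$ — and not some perturbation of it — emerges as the coefficient of the residual quadratic, and making sure the $-\fprod{T(\bx-\bx^0),\by-\by^0}$ term on the right of \eqref{eq:DPA-rate} is the only uncancelled bilinear leftover after telescoping. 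Since this is a direct specialization of~\cite{chambolle2015ergodic} to block-weighted Bregman distances with block-wise smoothness constants, I would either cite their Theorem~1 after noting it applies verbatim once the step-size condition is read as the matrix inequality above, or reproduce the short telescoping argument for completeness.
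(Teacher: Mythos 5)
Your proposal is correct and follows essentially the same route as the paper's own proof: a block-wise three-point prox inequality (strong convexity of each subproblem plus the descent lemma with local constants $L_i$), summation and regrouping so that the leftover quadratic in $(\bx^{k+1}-\bx^k,\by^{k+1}-\by^k)$ has coefficient matrix $\mathbf{\bar{Q}}(A,A_0)$, which is dropped via the positive semidefiniteness assumption, followed by telescoping and Jensen's inequality — this is exactly Lemma~\ref{lem:pda-lemma} and the ensuing argument in the appendix. The only detail worth making explicit is that $\mathbf{\bar{Q}}\succeq 0$ is also used to discard the final-iterate bracket $D_x(\bx,\bx^K)+D_y(\by,\by^K)-\fprod{T(\bx-\bx^K),\by-\by^K}\geq 0$ after telescoping, which your bookkeeping remark implicitly covers.
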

Although the proof of Theorem~\ref{thm:general-rate} follows from the lines of \cite{chambolle2015ergodic}, we provide the proof in the appendix for the sake of completeness as it will be used repeatedly to derive our results.

In the following section, we discuss how this algorithm can be implemented to compute an $\epsilon$-optimal solution to \eqref{eq:central_problem} in a distributed way using only $\cO(1/\epsilon)$ communications over the communication graph $\cG$ while respecting node-specific privacy requirements. Later, in Section~\ref{sec:dynamic}, we consider the scenario where the topology of the connectivity graph is time-varying, and propose a distributed algorithm that requires $\cO(1/\epsilon^{1+{1 \over p}})$ communications for any $p\geq 1$. Subsequently, in Section~\ref{sec:dual}, we explain how to extend these rates to distributed resource allocation type problems in \eqref{eq:dual-implement-intro}; and finally, in Section~\ref{sec:numerics} we test the proposed algorithms for solving primal SVM problem in a decentralized manner. In the rest, we assume that the duality gap for \eqref{eq:central_problem} is zero, and a primal-dual solution exists. A sufficient condition for this is the existence of a Slater point, i.e., there exists $\bar{x}\in \mathbf{relint}(\dom \Phi)$ such that $A_i\bar{x}-b_i\in \mathbf{int}(\cK_i)$ for $i\in\cN$, where $\dom\Phi=\cap_{i\in\cN}\dom\Phi_i$.
\noindent \section{Static Network Topology}
\label{sec:static}
\medskip
Let $x_i\in\reals^n$ denote the \emph{local} decision vector of node $i\in\cN$. By taking advantage of the fact that $\cG$ is {\it connected}, we can reformulate \eqref{eq:central_problem} as the following {\it distributed} optimization problem:
{\small
\begin{align}\label{eq:dist_problem}
\min_{\substack{x_i\in\reals^n,\ i\in\cN}} \left\{\sum_{i\in \mathcal{N}}{\Phi_i(x_i)}:x_i=x_j:~\lambda_{ij},\ \forall (i,j)\in \mathcal{E} ,\  A_ix_i-b_i \in \mathcal{K}_i:\ \theta_i,\ \forall{i}\in\mathcal{N} \right\},
\end{align}
}%
where $\lambda_{ij}\in\reals^n$ and $\theta_i\in\reals^{m_i}$ are the corresponding dual variables. Let $\bx=[x_i]_{i\in\cN}\in\reals^{n|\cN|}$. The consensus constraints $x_i=x_j$ for $(i,j)\in \mathcal{E}$ can be formulated as $M{\bf x}=0$, where $M\in \mathbb{R}^{n|\mathcal{E}|\times n|\mathcal{N}|}$ is a block matrix such that $M=H\otimes \id_n$ where $H$ is the oriented edge-node incidence matrix, i.e., the entry $H_{(i,j),l}$, corresponding to edge $(i,j)\in \mathcal{E}$ and $l\in \mathcal{N}$, is equal to $1$ if $l=i$, $-1$ if $l=j$, and $0$ otherwise.
Note that $M^\T M=H^\T H\otimes \id_n=\Omega\otimes \id_n$, where $\Omega\in \mathbb{R}^{|\mathcal{N}|\times |\mathcal{N}|}$ denotes the graph Laplacian of $\cG$, i.e., $\Omega_{ii}=d_i$, $\Omega_{ij}=-1$ if $(i,j)\in\cE$ or $(j,i)\in\cE$, and equal to $0$ otherwise. 

For any closed convex set $\cS$, we have $\sigma^*_{\cS}(\cdot)=\ind{S}(\cdot)$; therefore, using the fact that $\sigma^*_{\cK_i}=\ind{\cK_i}$ for $i\in\cN$,
one can obtain the following saddle point problem corresponding to \eqref{eq:dist_problem},
{\small
\begin{align}\label{eq:static-saddle}
\min_{{\bf x}}\max_{\by}\mathcal{L}({\bf x}, \by)\triangleq\sum_{i\in\mathcal{N}}\bigg(\Phi_i(x_i)+\langle \theta_i,A_ix_i-b_i \rangle -\sigma_{\mathcal{K}_i}(\theta_i) \bigg)+\langle \blambda, M{\bf x}\rangle ,
\end{align}
}%
where $\by=[\btheta^\top~\blambda^\top]^\top$ for $\blambda=[\lambda_{ij}]_{(i,j)\in\mathcal{E}}\in\reals^{n|\cE|}$, $\btheta=[\theta_i]_{i\in \mathcal{N}}\in\reals^m$, 
and $m\triangleq\sum_{i\in\cN}{m_i}$. 

Next, we study the distributed implementation of PDA in \eqref{eq:PDA-x}-\eqref{eq:PDA-y} to solve \eqref{eq:static-saddle}. Let $\Phi(\bx)\triangleq\sum_{i\in\cN}\Phi_i(x_i)$, 
and $h(\by)\triangleq\sum_{i\in\cN}\sigma_{\mathcal{K}_i}(\theta_i)+\fprod{b_i,\theta_i}$. Define the block-diagonal matrix $A\triangleq\diag([A_i]_{i\in\mathcal{N}})\in\reals^{m \times n|\cN|}$ and $T=[A^\top M^\top]^\top$. Therefore, given the initial iterates $\bx^0,\btheta^0,\blambda^0$ and parameters $\gamma>0$, $\tau_i,\kappa_i>0$ for $i\in\cN$, choosing $D_x$ and $D_y$ as defined in Definition~\ref{def:bregman}, and setting $\nu_x=\nu_y=1$, PDA iterations in \eqref{eq:PDA-x}-\eqref{eq:PDA-y} take the following form:   
{\small
\begin{align}\label{eq:pock-pd}
{\bf x}^{k+1}&\gets\argmin_{{\bf x}} \langle \blambda^k, M{\bf x} \rangle + \sum_{i\in\mathcal{N}}\bigg[ \rho_i(x_i)+f_i(x_i^k)+\langle \nabla f(x_i^k),x_i-x_i^k\rangle +\langle A_ix_i-b_i,\theta_i^k\rangle+{1\over 2\tau_i}\|x_i-x_i^k\|^2\bigg], \ \ \ \ \nonumber \\
\theta_i^{k+1}&\gets\argmin_{\theta_i}\sigma_{\mathcal{K}_i}(\theta_i) -\langle A_i(2x_i^{k+1}-x_i^k)-b_i,\theta_i\rangle +{1\over 2\kappa_i}\|\theta_i-\theta_i^k\|^2, \ \ \ \ i\in\mathcal{N} \\
\lambda^{k+1}&\gets\argmin_{\blambda} \left\{-\langle M(2{\bf x}^{k+1}-{\bf x}^k),\blambda \rangle +{1\over 2\gamma}\|\blambda-\blambda^k\|^2\right\}=\blambda^k+\gamma M(2{\bf x}^{k+1}-{\bf x}^k) \nonumber
\end{align}
}%
Since $\cK_i$ is a cone, $\prox{\kappa_i\sigma_{\cK_i}}(.)=\cP_{\cK_i^\circ}(.)$; hence, $\theta_i^{k+1}$ can be written in closed form as   
{\small
\begin{align}
\theta_i^{k+1}\gets\cP_{\cK_i^\circ}\bigg(\theta_i^k+\kappa_i\big(A_i(2x_i^{k+1}-x_i^k)-b_i\big)\bigg), \ \ \ \ i\in\mathcal{N}. \nonumber
\end{align}
}%
Using recursion in $\blambda$ update rule in~\eqref{eq:pock-pd}, we can write $\blambda^{k+1}$ as a partial summation of previous primal variable ${\bf x}^k$ iterates, i.e., $\blambda^k=\blambda^0+\gamma\sum_{\ell=0}^{k-1} M(2{\bf x}^{\ell+1}-{\bf x}^\ell)$. Let $\blambda^0\gets\gamma M\bx^0$, $\bs^0\gets\bx^0$, and $\bs^k\triangleq {\bf x}^k+\sum_{\ell=1}^k {\bf x}^\ell$ for $k\geq 1$; since $M^\top M=\Omega\otimes \id_n$ we obtain that
{\small
\[\langle M{\bf x},\blambda^k \rangle=\gamma \langle {\bf x},~(\Omega \otimes \id_n)\bs^k\rangle=\gamma\sum_{i\in\mathcal{N}}\langle { x_i},~\sum_{j\in\mathcal{N}_i}(s^k_i-s^k_j)\rangle.\]
}%
Thus, PDA iterations given in~\eqref{eq:pock-pd} for the static graph $\cG$ can be computed in \emph{decentralized} way, via the node-specific computations as in Algorithm~DPDA-S displayed in Fig.~\ref{alg:PDS} below.
\begin{figure}[htpb]
\centering
\framebox{\parbox{\columnwidth}{
{\small
\textbf{Algorithm DPDA-S} ( $\bx^{0},\btheta^0,\gamma,\{\tau_i,\kappa_i\}_{i\in\cN}$ ) \\[1.5mm]
Initialization: $s_i^0\gets x_i^0$, \quad $i\in\cN$\\
Step $k$: ($k \geq 0$)\\
\text{ } 1. $x_i^{k+1}\gets\prox{\tau_i\rho_i}\left(x_i^k-\tau_i\bigg(\nabla f_i(x_i^k)+A_i^\top\theta^k_i+\gamma \sum_{j\in\mathcal{N}_i}(s^k_i-s^k_j)\bigg)\right), \quad i\in\cN$  \\
\text{ } 2. $s_i^{k+1}\gets x_i^{k+1}+\sum_{\ell=1}^{k+1} x_i^\ell, \quad i \in \cN$\\
\text{ } 3. $\theta_i^{k+1}\gets\cP_{\cK_i^\circ}\bigg(\theta_i^k+\kappa_i\big(A_i(2x_i^{k+1}-x_i^k)-b_i\big)\bigg), \quad i \in \cN$\\
 }}}
\caption{\small Distributed Primal Dual Algorithm for Static $\cG$ (DPDA-S)}
\label{alg:PDS}
\vspace*{-3mm}
\end{figure}

The convergence rate for DPDA-S, given in~\eqref{eq:DPA-rate}, follows from Theorem~\ref{thm:general-rate}
with the help of following technical lemma.
\begin{lemma}\label{lem:schur}
Given $\{\tau_i,\kappa_i\}_{i\in\mathcal{N}}$ and $\gamma$ such that $\gamma>0$, and $\tau_i>0$, $\kappa_i>0$ for $i\in\cN$, let 
$A_0=M$ and $A\triangleq\diag([A_i]_{i\in\mathcal{N}})$. Then $\mathbf{\bar{Q}}\triangleq\mathbf{\bar{Q}}(A,A_0)\succeq 0$ if $\{\tau_i,\kappa_i\}_{i\in\mathcal{N}}$ and $\gamma$ are chosen such that
{\small
\begin{align}\label{eq:schur-cond}
\left({1\over \tau_i}-L_i-2\gamma d_i\right){1\over \kappa_i} \geq \sigma^2_{\max}(A_i),\quad \forall~i\in\mathcal{N},
\end{align}
}%
where $\mathbf{\bar{Q}}(A,A_0)$ is defined in Theorem~\ref{thm:general-rate}.
\end{lemma}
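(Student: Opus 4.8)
The plan is to certify $\mathbf{\bar Q}\triangleq\mathbf{\bar Q}(A,A_0)\succeq 0$ by a Schur complement taken with respect to its lower-right $2\times2$ block $\diag(\mathbf{D}_\kappa,\mathbf{D}_\gamma)$. This block is positive definite, since $\kappa_i>0$ for $i\in\cN$ and $\gamma>0$, so the standard Schur complement criterion applies: $\mathbf{\bar Q}\succeq 0$ if and only if the Schur complement $S\triangleq\mathbf{\bar D}_\tau-A^\top\mathbf{D}_\kappa^{-1}A-A_0^\top\mathbf{D}_\gamma^{-1}A_0\succeq 0$. (The two off-diagonal blocks $-A^\top$ and $-A_0^\top$ pair with the inverse of the block-diagonal bottom-right block, so their contributions simply add up.)

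Next I would compute the two subtracted terms. Since $A=\diag([A_i]_{i\in\cN})$ and $\mathbf{D}_\kappa^{-1}=\diag([\kappa_i\id_{m_i}]_{i\in\cN})$, we get $A^\top\mathbf{D}_\kappa^{-1}A=\diag([\kappa_iA_i^\top A_i]_{i\in\cN})$, which is block diagonal over the nodes. For the consensus term, using $A_0=M$, $\mathbf{D}_\gamma^{-1}=\gamma\,\id_{m_0}$, and the identity $M^\top M=\Omega\otimes\id_n$ recorded above, $A_0^\top\mathbf{D}_\gamma^{-1}A_0=\gamma\,M^\top M=\gamma(\Omega\otimes\id_n)$. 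This term is not block diagonal, so the key step is to dominate it by a block-diagonal matrix that decouples across nodes.

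For this I would invoke the elementary bound $\Omega\preceq 2D$, where $D\triangleq\diag([d_i]_{i\in\cN})$ is the degree matrix: indeed $2D-\Omega=D+W$, with $W$ the adjacency matrix of $\cG$, and $D+W$ (the signless Laplacian) is positive semidefinite, being a sum of one rank-one positive semidefinite term per edge. Taking the Kronecker product with $\id_n$ preserves the Loewner ordering, hence $\gamma(\Omega\otimes\id_n)\preceq\gamma(2D\otimes\id_n)=\diag([2\gamma d_i\id_n]_{i\in\cN})$. Combining the three pieces,
\[
S\ \succeq\ \mathbf{\bar D}_\tau-\diag\big([\kappa_iA_i^\top A_i]_{i\in\cN}\big)-\diag\big([2\gamma d_i\id_n]_{i\in\cN}\big)=\diag\Big(\big[(\tfrac{1}{\tau_i}-L_i-2\gamma d_i)\id_n-\kappa_iA_i^\top A_i\big]_{i\in\cN}\Big).
\]
It therefore suffices that each diagonal block be positive semidefinite. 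Since $A_i^\top A_i\preceq\sigma_{\max}^2(A_i)\,\id_n$, the $i$-th block is positive semidefinite whenever $\tfrac{1}{\tau_i}-L_i-2\gamma d_i\geq\kappa_i\,\sigma_{\max}^2(A_i)$, which after multiplying by $1/\kappa_i>0$ is exactly condition \eqref{eq:schur-cond}; note this also forces $\tfrac{1}{\tau_i}-L_i-2\gamma d_i\geq 0$, so the requirement is consistent. Hence $S\succeq0$ and thus $\mathbf{\bar Q}\succeq0$.

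The only mildly delicate points are (i) the Schur complement step, which is routine once one observes that the bottom-right block is both positive definite and block diagonal, and (ii) replacing the non-block-diagonal Laplacian term $\gamma(\Omega\otimes\id_n)$ by the block-diagonal upper bound $\diag([2\gamma d_i\id_n]_{i\in\cN})$ — this is where the small amount of care is needed, but it is immediate from positive semidefiniteness of the signless Laplacian. Everything else is a direct computation.
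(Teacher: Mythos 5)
Your proposal is correct and follows essentially the same route as the paper's proof: reduce to the Schur complement $\mathbf{\bar{D}}_\tau-A^\top\mathbf{D}_\kappa^{-1}A-\gamma(\Omega\otimes\id_n)\succeq 0$ and then bound $\Omega\preceq 2\diag([d_i]_{i\in\cN})$ (your signless-Laplacian argument is the same fact the paper gets via diagonal dominance). The only cosmetic difference is that you take one Schur complement with respect to the combined block $\diag(\mathbf{D}_\kappa,\mathbf{D}_\gamma)$, whereas the paper eliminates $\mathbf{D}_\gamma$ and then $\mathbf{D}_\kappa$ in two successive steps.
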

\begin{proof}
Since $\mathbf{D}_\gamma\succ 0$, Schur complement condition implies that $\mathbf{Q}\succeq 0$ if and only if
{\small
\begin{align}\label{eq:schur-cond-2}
B-\gamma \begin{bmatrix} M^\top M & 0 \\ 0 & 0\end{bmatrix} \succeq 0,\quad \hbox{where}\quad B\triangleq\begin{bmatrix}\mathbf{\bar{D}}_\tau & -A^\top\\ -A & \mathbf{D}_\kappa \end{bmatrix}.
\end{align}}%
Moreover, since $\mathbf{D}_\kappa\succ 0$, again using Schur complement and the fact that $M^\top M=\Omega\otimes \id_n$, one can conclude that \eqref{eq:schur-cond-2} holds if and only if $\mathbf{\bar{D}}_\tau-\gamma(\Omega\otimes \id_n)-A^\top \mathbf{D}_\kappa^{-1}A\succeq 0$. By definition $\Omega=\diag([d_i]_{i\in\mathcal{N}})-E$, where $E_{ii}=0$ for all $i\in \mathcal{N}$ and $E_{ij}=E_{ji}=1$ if $(i,j)\in\mathcal{E}$ or $(j,i)\in \mathcal{E}$. Note that $\diag([d_i]_{i\in\mathcal{N}})+E \succeq 0$ since it is diagonally dominant. Therefore, $\Omega \preceq 2\diag([d_i]_{i\in\mathcal{N}})$. Hence, it is sufficient to have $\mathbf{\bar{D}}_\tau- 2\gamma\diag([d_i]_{i\in\mathcal{N}})\otimes \id_n-A^\top \mathbf{D}_\kappa^{-1}A\succeq 0$, and this condition holds if the condition in the statement of the lemma is true.
\end{proof}
\begin{remark}
Note that for all $i\in\cN$ by choosing $\tau_i= {1\over c_i+L_i+2\gamma d_i}$, $\kappa_i={c_i\over \sigma^2_{\max}(A_i)}$ for any $c_i>0$, the condition in Lemma~\ref{lem:schur} is satisfied.
\end{remark}
Next, we refine the error bound in~\eqref{eq:DPA-rate}, and quantify the suboptimality and infeasibility of the DPDA-S iterate sequence. 
\begin{theorem}\label{thm:static-error-bounds}
Let $({\bf x}^*,\btheta^*,\blambda^*)$ be an arbitrary saddle-point for $\cL$ in \eqref{eq:static-saddle}, and $\{\bx^k\}_{k\geq 0}$ be the iterate sequence generated using Algorithm DPDA-S, displayed in Fig.~\ref{alg:PDS}, initialized from an arbitrary $\bx^0$ and $\btheta^0=\mathbf{0}$. Let primal-dual step-sizes $\{\tau_i,\kappa_i\}_{i\in\cN}$ and $\gamma$ be chosen such that the condition \eqref{eq:schur-cond} in Lemma~\ref{lem:schur} holds.
Then, $\{\bar{\bx}^K\}$ has a limit point and every limit point has the form $\one_{|\cN|}\otimes x^*$ such that $x^*$ is an optimal solution to \eqref{eq:central_problem}. In particular, the following error bounds hold for all $K\geq 1$:
{\small
\begin{equation*}
\norm{\blambda^*}\|M\bar{\bf x}^K\|+\sum_{i\in\cN}\norm{\theta_i^*} d_{\mathcal{K}_i}(A_i\bar{\bf x}_i^K-b_i)\leq {\Theta_1\over K},\qquad |\Phi(\bar{\bx}^K)-\Phi({\bf x}^*)|\leq {\Theta_1\over K},
\end{equation*}
}%
where $\Theta_1\triangleq {2\over \gamma}\|\blambda^*\|^2-\frac{\gamma}{2}\norm{M\bx^0}^2+\sum_{i\in\mathcal{N}}\bigg[{1\over 2\tau_i}\|x^*_i-x_i^0\|^2+{4\over \kappa_i}\|\theta^*_i\|^2\bigg]$,
and $\bar{\bf x}^K\triangleq {1\over K}\sum_{k=1}^K\bx^k$.
\end{theorem}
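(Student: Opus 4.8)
The plan is to derive the stated error bounds from the ergodic inequality \eqref{eq:DPA-rate}, which applies here since Lemma~\ref{lem:schur} guarantees $\mathbf{\bar Q}(A,M)\succeq 0$ under \eqref{eq:schur-cond}, hence Theorem~\ref{thm:general-rate} gives us $\cL(\bar\bx^K,\by)-\cL(\bx,\bar\by^K)\leq \frac{1}{K}\big(D_x(\bx,\bx^0)+D_y(\by,\by^0)-\fprod{T(\bx-\bx^0),\by-\by^0}\big)$ for all $(\bx,\by)$. The key observation is to evaluate the saddle function $\cL$ from \eqref{eq:static-saddle} at the right test points. Writing $\cL(\bx,\by)=\Phi(\bx)+\fprod{\btheta,A\bx-b}-\sigma_{\cK}(\btheta)+\fprod{\blambda,M\bx}$ with $\sigma_\cK(\btheta)=\sum_i\sigma_{\cK_i}(\theta_i)=\ind{\cK^\circ}(\btheta)$, I would first plug in $\bx=\bx^*$ on the left side and $(\btheta,\blambda)$ free on the right; using $\cL(\bx^*,\bar\by^K)\leq \cL(\bx^*,\by^*)=\cL(\bx,\bar\by^K)$-type saddle inequalities together with the optimality/feasibility of $\bx^*$ (so $M\bx^*=0$, $A_i\bx_i^*-b_i\in\cK_i$, and $\fprod{\theta_i^*,A_i\bx_i^*-b_i}=0$ by complementary slackness), the left-hand side $\cL(\bar\bx^K,\by)-\cL(\bx^*,\bar\by^K)$ should reduce to a lower bound of the form $\Phi(\bar\bx^K)-\Phi(\bx^*)+\fprod{\btheta,A\bar\bx^K-b}+\fprod{\blambda,M\bar\bx^K}$ for any $\btheta\in\cK^\circ$, $\blambda$.

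Next I would exploit the freedom in $(\btheta,\blambda)$ on the right-hand side of \eqref{eq:DPA-rate}. Since $\btheta^0=\zero$ and $\blambda^0=\gamma M\bx^0$, the quadratic terms $D_y(\by,\by^0)$ become $\sum_i\frac{1}{2\kappa_i}\norm{\theta_i}^2+\frac{1}{2\gamma}\norm{\blambda-\gamma M\bx^0}^2$, and the cross term $-\fprod{T(\bx^*-\bx^0),\by-\by^0}$ expands as $-\fprod{A(\bx^*-\bx^0),\btheta}-\fprod{M(\bx^*-\bx^0),\blambda-\gamma M\bx^0}$. The standard trick is to choose $\btheta$ proportional to the infeasibility direction and $\blambda$ proportional to $M\bar\bx^K$: specifically, for each $i$ one takes $\theta_i$ in the polar cone $\cK_i^\circ$ so that $\fprod{\theta_i,A_i\bar\bx_i^K-b_i}=\norm{\theta_i}\, d_{\cK_i}(A_i\bar\bx_i^K-b_i)$ (achieved by a scaled projection residual, valid since $-\cP_{\cK_i}$ residuals lie in $\cK_i^\circ$), with $\norm{\theta_i}$ set to, say, $2\norm{\theta_i^*}$ or $\norm{\theta_i^*}$ plus a unit-vector perturbation, and similarly $\blambda$ chosen to extract $\norm{M\bar\bx^K}$ with magnitude comparable to $\norm{\blambda^*}$. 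Summing the contributions and bounding the cross terms via Young's inequality — absorbing $\frac{\gamma}{2}\norm{M\bx^0}^2$ and producing the factors $\frac{2}{\gamma}\norm{\blambda^*}^2$ and $\frac{4}{\kappa_i}\norm{\theta_i^*}^2$ in $\Theta_1$ — should yield both the combined infeasibility-plus-consensus bound $\norm{\blambda^*}\norm{M\bar\bx^K}+\sum_i\norm{\theta_i^*}d_{\cK_i}(A_i\bar\bx_i^K-b_i)\leq\Theta_1/K$ and, as a byproduct (taking $\btheta=\zero$, $\blambda=\zero$), the one-sided bound $\Phi(\bar\bx^K)-\Phi(\bx^*)\leq\Theta_1/K$. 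For the reverse inequality $\Phi(\bar\bx^K)-\Phi(\bx^*)\geq -\Theta_1/K$, I would invoke the saddle-point inequality $\cL(\bx^*,\by^*)\leq\cL(\bar\bx^K,\by^*)$ directly: this gives $\Phi(\bx^*)\leq\Phi(\bar\bx^K)+\fprod{\btheta^*,A\bar\bx^K-b}+\fprod{\blambda^*,M\bar\bx^K}$, and the last two terms are bounded in absolute value by the already-established infeasibility/consensus bounds (using $\fprod{\theta_i^*,A_i\bar\bx_i^K-b_i}\geq -\norm{\theta_i^*}d_{\cK_i}(A_i\bar\bx_i^K-b_i)$ since $\theta_i^*\in\cK_i^\circ$), closing the two-sided estimate.

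Finally, for the limit-point claim, I would argue that the bounds force $\norm{M\bar\bx^K}\to 0$ and $d_{\cK_i}(A_i\bar\bx_i^K-b_i)\to 0$ (when $\blambda^*,\theta_i^*\neq 0$; degenerate cases handled separately), so any limit point $\bar\bx^\infty$ satisfies $M\bar\bx^\infty=0$, i.e. it is of the form $\one_{|\cN|}\otimes x^*$, and is feasible; combined with $\Phi(\bar\bx^\infty)=\Phi(\bx^*)$ from the vanishing suboptimality bound, $x^*$ solves \eqref{eq:central_problem}. Boundedness of $\{\bar\bx^K\}$ — needed for a limit point to exist — would follow from the uniform bound on $\Phi(\bar\bx^K)$ together with coercivity-type consequences of feasibility in the limit, or more carefully from tracking that \eqref{eq:DPA-rate} with a suitable test point controls $\norm{\bar\bx^K-\bx^*}$; this boundedness argument, and the careful choice of the perturbed test vectors $\btheta,\blambda$ that simultaneously produces the clean constants in $\Theta_1$ without cross-term blow-up, is the part I expect to require the most care. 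The main obstacle is bookkeeping: matching the exact coefficients $\tfrac{2}{\gamma}$, $-\tfrac{\gamma}{2}$, $\tfrac{1}{2\tau_i}$, $\tfrac{4}{\kappa_i}$ in $\Theta_1$ demands choosing the magnitudes of the test-point perturbations precisely and applying Young's inequality with the right weights.
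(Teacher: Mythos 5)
Your plan is essentially the paper's own proof: the same ergodic bound from Theorem~\ref{thm:general-rate} (valid by Lemma~\ref{lem:schur}), the same test points ($\tilde{\theta}_i$ a polar-cone projection residual scaled to norm $2\|\theta_i^*\|$, and $\tilde{\blambda}$ a multiple of $M\bar{\bx}^K$ of norm $2\|\blambda^*\|$, with $\blambda^0=\gamma M\bx^0$, $\btheta^0=\zero$ producing the constants in $\Theta_1$), and the same saddle-point inequality at $(\btheta^*,\blambda^*)$ together with $\fprod{\theta_i^*,A_i\bar{\bx}_i^K-b_i}\leq\|\theta_i^*\|\,d_{\cK_i}(A_i\bar{\bx}_i^K-b_i)$ for the lower bound. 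The only adjustment needed in execution is the order of combination: the infeasibility/consensus bound is not a direct byproduct of the test-point evaluation alone (which yields $\Phi(\bar{\bx}^K)-\Phi(\bx^*)+2\big(\|\blambda^*\|\|M\bar{\bx}^K\|+\sum_{i\in\cN}\|\theta_i^*\|d_{\cK_i}(A_i\bar{\bx}_i^K-b_i)\big)\leq\Theta_1/K$), but follows only after adding the saddle-point lower bound, exactly as the paper does when combining \eqref{eq:upper-bound} and \eqref{eq:lower-bound}.
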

\begin{proof}
We start the proof with a simple observation. Every closed convex cone $\cC\in\reals^m$ induces a decomposition on $\reals^{m}$, i.e., according to Moreau decomposition, for any $y\in\reals^m$, there exist $y^1,y^2\in\reals^m$ such that $y^1\perp y^2$ and $y=y^1+y^2$; in particular, $y^1=\cP_{\mathcal{\cC}}(y)$ and $y^2=\cP_{\cC^\circ}(y)$ where $\mathcal{\cC}^\circ=-\cC^*$ is the polar cone of $\cC$. Hence, it follows from the definition of a support function and the fact that $\langle y, w\rangle \leq 0$ for any $y\in\cC^{\circ}$ and $w\in \cC$, one can conclude that
{\small
\begin{align}\label{eq:polar-support}
\sigma_{\cC}(y)=
\begin{cases}
0 & y\in \cC^{\circ} \\
+\infty & \hbox{o.w.}
\end{cases}
\end{align}
}%
Note the iterate sequence $\{\bx^k,\btheta^k\}_{k\geq 0}$ generated by Algorithm DPDA-S in Fig.~\ref{alg:PDS} is the same as the PDA iterate sequence $\{{\bf x}^k,\btheta^k,\blambda^k\}_{k\geq 0}$ computed according to \eqref{eq:pock-pd} for solving \eqref{eq:static-saddle} when $\blambda^0=\gamma M{\bf x}^0$. Since the step-size parameters $\{\tau_i,\kappa_i\}_{i\in\cN}$ and $\gamma$ are chosen satisfying the condition \eqref{eq:schur-cond} in Lemma~\ref{lem:schur}, the requirement $\mathbf{\bar{Q}}(A,A_0)\succeq 0$ in Theorem~\ref{thm:general-rate} is true, where $A_0=M$ for problem \eqref{eq:static-saddle}. Therefore, Theorem~\ref{thm:general-rate} implies that \eqref{eq:DPA-rate} holds for all $K\geq 1$ with $\nu_x=\nu_y=1$ and Bregman function $D_x$, $D_y$ defined as in Definition~\ref{def:bregman}. In particular, the result of Theorem~\ref{thm:general-rate} can be written more explicitly for \eqref{eq:static-saddle} as follows: let $\bar{\bf x}^K\triangleq {1\over K}\sum_{k=1}^K\bx_k$, $\bar{\btheta}^K\triangleq {1\over K}\sum_{k=1}^K\btheta^k$ and $\bar{\blambda}^K\triangleq {1\over K}\sum_{k=1}^K\blambda^k$, then for any ${\bf x}\in \mathbb{R}^{n|\mathcal{N}|}$, $\blambda\in \mathbb{R}^{n|\mathcal{E}|}$, $\btheta\in \mathbb{R}^m$ for $m=\sum_{i\in\cN}m_i$, and for all $K\geq 1$, we have
{\small
\begin{align}\label{eq:saddle-rate}
\mathcal{L}(\bar{\bf x}^K,\btheta,\blambda)-&\mathcal{L}({\bf x},\bar{\btheta}^K,\bar{\blambda}^K)\leq \Theta(\bx,\btheta,\blambda)/K,\\
\Theta(\bx,\btheta,\blambda)\triangleq&{1\over 2\gamma}\|\blambda-\blambda^0\|^2-\langle M({\bf x}-{\bf x}^0),\blambda-\blambda^0\rangle \nonumber \\
&+\sum_{i\in\mathcal{N}}\bigg[{1\over 2\tau_i}\|x_i-x_i^0\|^2+{1\over 2\kappa_i}\|\theta_i-\theta_i^0\|^2-\langle A_i(x_i-x_i^0),\theta_i-\theta_i^0\rangle \bigg].\nonumber
\end{align}
}%
Note that under the assumption in \eqref{eq:schur-cond}, Schur complement condition guarantees that
{\small
\begin{equation*}
\begin{bmatrix}
\frac{1}{\tau_i}\id_n &-A_i^\top\\
-A_i & \frac{1}{\kappa_i}\id_{m_i} \\
\end{bmatrix}
\preceq
\begin{bmatrix}
\frac{2}{\tau_i}\id_n & \mathbf{0}^\top\\
\mathbf{0} & \frac{2}{\kappa_i}\id_{m_i} \\
\end{bmatrix}.
\end{equation*}}%
Therefore,
{\small
\begin{eqnarray}
\label{eq:simple-C-bound}
\Theta(\bx,\btheta,\blambda)\leq \sum_{i\in\mathcal{N}}\bigg[{1\over\tau_i}\|x_i-x_i^0\|^2+{1\over \kappa_i}\|\theta_i-\theta_i^0\|^2\bigg]+{1\over 2\gamma}\|\blambda-\blambda^0\|^2-\langle M({\bf x}-{\bf x}^0),\blambda-\blambda^0\rangle.
\end{eqnarray}
}%
Let $({\bf x}^*,\btheta^*,\blambda^*)$ be an arbitrary saddle-point for $\cL$ in \eqref{eq:static-saddle}; hence, $\mathcal{L}( \bx^*,\btheta^*,\blambda^*)=\Phi(\bx^*)$ and $\theta_i^*\in\cK_i^\circ$ for $i\in\cN$.
Define $\tilde{\bw}=[\tilde{w}_i]_{i\in\cN}$ such that $\tilde{w}_i\triangleq A_i\bar{x}_i^K-b_i\in\reals^{m_i}$ for $i\in\cN$. Since $\cK_i$ is a closed convex cone, it induces a decomposition on $\reals^{m_i}$ for $i\in\cN$, i.e., consider $\tilde{w}_i^1=\cP_{\mathcal{K}_i}(\tilde{w}_i)$ and $\tilde{w}_i^2=\cP_{\mathcal{K}_i^\circ}(\tilde{w}_i)$. Note that since $\tilde{w}_i=\tilde{w}_i^1+\tilde{w}_i^2$, $\norm{\tilde{w}_i^2}=\norm{\cP_{\cK_i}(\tilde{w}_i)-\tilde{w}_i}=d_{\cK_i}(\tilde{w}_i)$. Define $\tilde{\btheta}=[\tilde{\theta}_i]_{i\in\cN}$ such that $\tilde{\theta}_i\triangleq 2\|\theta_i^*\|{ 1 \over \|\tilde{w}_i^2\|}\tilde{w}_i^2\in\cK_i^\circ$. Therefore,
{\small
\begin{equation}
\label{eq:tilde-theta}
\langle A_i\bar{ x}_i^K -b_i, \tilde{\theta}_i \rangle=2\frac{\|\theta_i^*\|}{\|\tilde{w}_i^2\|}\fprod{\tilde{w}_i^1+\tilde{w}_i^2,~\tilde{w}_i^2}=2\|\theta_i^*\| d_{\mathcal{K}_i}(A_i\bar{ x}_i^K -b_i),
\end{equation}}%
where the second equality follows from $\tilde{w}_i^1\perp \tilde{w}_i^2$. Similarly, define $\tilde{\blambda}\triangleq 2\|\blambda^*\|(M\bar{\bf x}^K/\norm{M\bar{\bf x}^K})$. Hence, $\langle M\bar{\bf x}^K, \tilde{\blambda}\rangle=2\|\blambda^*\| \|M\bar{\bf x}^K\|$. Therefore, together with \eqref{eq:tilde-theta}, we get
{\small
\begin{align*}
\mathcal{L}(\bar{\bf x}^K,\tilde{\btheta},\tilde{\blambda})-\mathcal{L}(\bx^*,\btheta^*,\blambda^*)= \Phi(\bar{\bx}^K)-\Phi({\bx}^*)+2\left(\|\blambda^*\| \|M\bar{\bx}^K\|+\sum_{i\in\cN} d_{\mathcal{K}_i}(A_i\bar{x}_i^K-b_i)\|\theta_i^*\|\right).
\end{align*}}%
Now we are going to upper bound $\Theta(\bx^*,\tilde{\btheta},\tilde{\blambda})$ using \eqref{eq:simple-C-bound}. Since $\blambda^0=\gamma M\bx^0$, we get
{\small
\begin{align}
\label{eq:tilde-lambda}
 {1\over 2\gamma}\|\tilde{\blambda}-\blambda^0\|^2-\langle M(\bx^*-{\bf x}^0),\tilde{\blambda}-\blambda^0\rangle
 &={1\over 2\gamma}\left(\|\tilde{\blambda}-\blambda^0-\gamma M(\bx^*-\bx^0)\|^2-\gamma^2\|M(\bx^*-\bx^0)\|^2\right) \nonumber \\
 &={2\over \gamma}\|\blambda^*\|^2-\frac{\gamma}{2}\norm{M\bx^0}^2,
\end{align}}%
where in the last equality follows from $M{\bf x}^*=0$. 
Since $\btheta^*$ and $\blambda^*$ maximize the Lagrangian function at $\bx^*$, and we set $\theta_i^0=\mathbf{0}$, the definitions of $\tilde{\theta}_i$, $\tilde{\blambda}$, and \eqref{eq:saddle-rate}, \eqref{eq:simple-C-bound} together imply that
{\small
\begin{equation*}
\label{eq:aux-upper-1}
\mathcal{L}(\bar{\bf x}^K,\tilde{\btheta},\tilde{\blambda})-\mathcal{L}(\bx^*,\btheta^*,\blambda^*)\leq \mathcal{L}(\bar{\bf x}^K,\tilde{\btheta},\tilde{\blambda})-\mathcal{L}(\bx^*,\bar{\btheta}^K,\bar{\blambda}^K)\leq \frac{1}{K}\Theta(\bx^*,\tilde{\btheta},\tilde{\blambda})\leq\frac{\Theta_1}{K}.
\end{equation*}}%
Therefore, we can conclude that
{\small
\begin{equation}
\label{eq:upper-bound}
\Phi(\bar{\bx}^K)-\Phi({\bx}^*)+2\left(\|\blambda^*\| \|M\bar{\bx}^K\|+\sum_{i\in\cN} d_{\mathcal{K}_i}(A_i\bar{x}_i^K-b_i)\|\theta_i^*\|\right)\leq \frac{\Theta_1}{K},
\end{equation}}%
where we use $\mathcal{L}(\bx^*,\btheta^*,\blambda^*)=\Phi({\bx}^*)$ and the fact that $\sigma_{\mathcal{K}_i}(\tilde{\theta}_i)=0$ due to \eqref{eq:polar-support} since $\tilde{\theta}_i\in\cK_i^\circ$ for $i\in\cN$. Moreover, since $({\bf x}^*,\btheta^*,\blambda^*)$ is a saddle-point for $\cL$ in \eqref{eq:static-saddle}, we clearly have $\mathcal{L}(\bar{\bf x}^K,\btheta^*,\blambda^*)-\mathcal{L}({\bf x}^*,\btheta^*,\blambda^*) \geq 0$; therefore,
{\small
\begin{align}\label{eq:aux-lower}
\Phi(\bar{\bf x}^K)-\Phi({\bf x}^*)+\langle \blambda^*,M\bar{\bf x}^K\rangle+\sum_{i\in\cN}\fprod{\theta_i^*,~A_i\bar{\bf x}_i^K-b_i}\geq 0.
\end{align}}%
Recall that $i\in\cN$ we defined $\tilde{w}_i^1=\cP_{\mathcal{K}_i}(\tilde{w}_i)$ and $\tilde{w}_i^2=\cP_{\mathcal{K}_i^\circ}(\tilde{w}_i)$, where $\tilde{w}_i\triangleq A_i\bar{x}_i^K-b_i\in\reals^{m_i}$.
For all $i\in\cN$, $\theta_i^*\in \mathcal{K}_i^{\circ}$ and $\tilde{w}_i^1\in \mathcal{K}_i$ imply $\langle \theta^*_i,\tilde{w}_i^1 \rangle \leq 0$; hence, for all $i\in\cN$,
{\small
$$\langle A_i\bar{\bf x}_i^K-b_i, \theta_i^*\rangle=\langle \tilde{w}_i-\tilde{w}_i^1+\tilde{w}_i^1,~\theta_i^* \rangle\leq \langle \tilde{w}_i-\tilde{w}_i^1,~\theta_i^* \rangle\leq \|\theta^*_i\| d_{\mathcal{K}_i}(A_i\bar{\bf x}_i^K-b_i).$$}%
Together with \eqref{eq:aux-lower}, we conclude that
{\small
\begin{equation}\label{eq:lower-bound}
\Phi(\bar{\bf x}^K)-\Phi({\bf x}^*)+\|\blambda^*\| \|M\bar{\bf x}^K\|+\sum_{i\in\cN}\|\theta^*_i\| d_{\mathcal{K}_i}(A_i\bar{\bf x}_i^K-b_i) \geq 0.
\end{equation}
}%
By combining inequalities \eqref{eq:upper-bound} and \eqref{eq:lower-bound} immediately implies the desired result.
\end{proof}
\section{Dynamic Network Topology}
\label{sec:dynamic}
In this section we develop a distributed primal-dual algorithm for solving \eqref{eq:central_problem} when the communication network topology is \emph{time-varying}. We assume a \emph{compact} domain, i.e., let $D_i\triangleq\max_{x_i,x'_i\in \dom \rho_i}\|x-x'\|$ and $B\triangleq\max_{i\in\mathcal{N}}D_i<\infty$. Let
{\small
\begin{equation*}
C\triangleq\{{\bf x}\in \mathbb{R}^{n|\mathcal{N}|}:\ \exists \bar{x}\in\mathbb{R}^n\quad \hbox{s.t.}\quad \ x_i=\bar{x},\ \forall i\in\mathcal{N},\quad \|\bar{x}\|\leq B \},
\end{equation*}}%
then one can reformulate \eqref{eq:central_problem} in a decentralized way as follows:
{\small
\begin{align}\label{eq:dynamic-saddle}
\min_{\bf x} \max_{\by} \cL(\bx,\by)\triangleq\sum_{i\in\mathcal{N}}\bigg(\Phi_i(x_i)+\langle \theta_i,A_ix_i-b_i\rangle-\sigma_{\mathcal{K}_i}(\theta_i)\bigg)+\fprod{\blambda,~\bx}-\sigma_C(\blambda),
\end{align}}%
where $\by=[\btheta^\top \blambda^\top]^\top$ such that $\blambda\in\reals^{n|\cN|}$, $\btheta=[\theta_i]_{i\in \mathcal{N}}\in\reals^m$, and $m\triangleq\sum_{i\in\cN}{m_i}$.

Next, we consider the implementation of PDA in \eqref{eq:PDA-x}-\eqref{eq:PDA-y} to solve \eqref{eq:dynamic-saddle}. Let $\Phi(\bx)\triangleq\sum_{i\in\cN}\Phi_i(x_i)$, and $h(\by)\triangleq\sigma_{\cC}(\blambda)+\sum_{i\in\cN}\sigma_{\mathcal{K}_i}(\theta_i)+\fprod{b_i,\theta_i}$. Define the block-diagonal matrix $A\triangleq\diag([A_i]_{i\in\mathcal{N}})\in\reals^{m \times n|\cN|}$ and $T=[A^\top~\id_{n|\cN|}]^\top$. Therefore, given the initial iterates $\bx^0,\btheta^0,\blambda^0$ and parameters $\gamma>0$, $\tau_i,\kappa_i>0$ for $i\in\cN$, choosing $D_x$ and $D_y$ as defined in Definition~\ref{def:bregman}, and setting $\nu_x=\nu_y=1$, PDA iterations given in \eqref{eq:PDA-x}-\eqref{eq:PDA-y} take the following form: Starting from $\bmu^0=\blambda^0$, compute for $i\in\cN$
{\small
\begin{align}\label{eq:pock-pd-2}
&x_i^{k+1}\gets\argmin_{{\bf x}} \rho_i(x_i)+f_i(x_i^k)+\langle \nabla f(x_i^k),x_i-x_i^k\rangle +\langle A_ix_i-b_i,\theta_i^k\rangle+\langle x_i,\mu_i^k \rangle +{1\over 2\tau_i}\|x_i-x_i^k\|^2_2,\nonumber \\
&\theta_i^{k+1}\gets\argmin_{\theta_i}\sigma_{\mathcal{K}_i}(\theta_i) -\langle A_i(2x_i^{k+1}-x_i^k)-b_i,~\theta_i\rangle +{1\over 2\kappa_i}\|\theta_i-\theta_i^k\|_2^2,\\
&\blambda^{k+1}\gets\argmin_{\bmu} \sigma_C(\bmu)-\langle 2{\bf x}^{k+1}-{\bf x}^k,\bmu \rangle +{1\over 2\gamma}\|\bmu-\bmu^k\|_2^2,\qquad \bmu^{k+1}\gets\blambda^{k+1}. \nonumber
\end{align}
}%
Using extended Moreau decomposition for proximal operators, $\blambda^{k+1}$ can be written as
{\small
\begin{align}\label{eq:extended-Moreau}
 \blambda^{k+1}&=\argmin_{\bmu}\sigma_C(\bmu)+{1\over 2\gamma}\|\bmu-(\bmu^k+\gamma(2{\bf x}^{k+1}-{\bf x}^k))\|^2 =\prox{\gamma\sigma_C}(\bmu^k+\gamma(2{\bf x}^{k+1}-{\bf x}^k)) \nonumber \\
 &=\bmu^k+\gamma(2{\bf x}^{k+1}-{\bf x}^k)-\gamma~\mathcal{P}_C\left( \frac{1}{\gamma}\bmu^k+2{\bf x}^{k+1}-{\bf x}^k\right).
\end{align}
}%
Let $\one\in \mathbb{R}^{|\mathcal{N}|}$ be the vector all ones, and $\cB_0\triangleq\{x\in\reals^n:\ \norm{x}\leq B\}$. Note that $\cP_{\cB_0}(x)=x\min\{1,\frac{B}{\norm{x}}\}$. For any $\bx=[x_i]_{i\in\cN}\in \mathbb{R}^{n|\mathcal{N}|}$, $\mathcal{P}_C(\bx)$ can be computed as
{\small
\begin{equation}
\mathcal{P}_C(\bx)=\one\otimes p(\bx),\quad\hbox{\normalsize where}\quad p(\bx)\triangleq \argmin_{\xi\in\cB_0} \sum_{i\in\mathcal{N}}\|\xi-x_i\|^2=\argmin_{\xi\in\cB_0}\|\xi-{1\over |\mathcal{N}|}\sum_{i\in\mathcal{N}}x_i\|^2.
\end{equation}
}%
Let $\mathcal{B}\triangleq\{\bx: \ \|x_i\|\leq B,\ i\in\cN\}=\Pi_{i\in\cN}\cB_0$. Hence, we can equivalently write
{\small
\begin{equation}
\label{eq:proj}
\mathcal{P}_C(\bx)=\mathcal{P}_{\mathcal{B}}\left((W\otimes\id_n)\bx\right),\quad \hbox{\normalsize where}\quad W\triangleq\frac{1}{|\cN|}\one\one^\top\in\reals^{|\cN|\times|\cN|}.
\end{equation}
}%
Equivalently,
{\small
\begin{equation}
\label{eq:proj-2}
\mathcal{P}_C(\bx)=\mathcal{P}_{\mathcal{B}}\left(\one\otimes\tilde{p}(\bx)\right),\quad \hbox{\normalsize where}\quad \tilde{p}(\bx)\triangleq{1\over |\mathcal{N}|}\sum_{i\in\mathcal{N}}x_i.
\end{equation}
}%
Although $\bx$-step and $\btheta$-step of the PDA implementation in \eqref{eq:pock-pd-2} can be computed locally at each node, computing $\blambda^{k+1}$ requires communication among the nodes. Indeed, evaluating the average operator $\tilde{p}(.)$ is \emph{not} a simple operation in a decentralized computational setting which only allows for communication among neighbors. In order to overcome this issue, we will approximate $\tilde{p}(.)$ operator using multi-consensus steps, and analyze the resulting iterations as an inexact primal-dual algorithm. In~\cite{chen2012fast}, this idea has been exploited within a distributed primal algorithm for unconstrained consensus optimization problems. We define the \emph{consensus step} as one time exchanging local variables among neighboring nodes -- the details of this operation will be discussed shortly. 
Since the connectivity network is dynamic, let $\cG^t=(\cN,\cE^t)$ be the connectivity network when consensus step-$t$ is realized for $t\in\integers_+$. We adopt the information exchange model 
in \cite{nedic2009distributed}.
\begin{assumption}\label{assum1}
Let $V^t\in\reals^{|\cN|\times|\cN|}$ be the weight matrix for consensus step-$t$ corresponding to $\cG^t=(\cN,\cE^t)$. 
Suppose $V^t$ satisfies following conditions for all $t\in\integers_+$: (\textbf{i}) $V^t$ is a doubly stochastic matrix; (\textbf{ii}) there exists $\zeta\in(0,1)$ such that for any $i\in\mathcal{N}$, $V^t_{ij}\geq \zeta$ if $j\in\mathcal{N}_i$, and $V^t_{ij}=0$ if $j\notin\mathcal{N}_i$; (\textbf{iii}) $\cE^{\infty}\triangleq\{(i,j)\in\cN\times\cN: (i,j)\in\cE^t\ \hbox{ for infinitely many }\ t\in\integers_+\}$ is connected, and there exists an integer $T>1$ such that if $(i,j)\in \cE^{\infty}$, then $(i,j)\in \cE^t\cup \cE^{t+1}\cup ...\cup \cE^{t+T-1}$ for all $t\geq 1$.
\end{assumption}
\begin{lemma}\label{lem:approximation}
~\cite{nedic2009distributed}~Let Assumption \ref{assum1} holds, and $W^{t,s}=V^tV^{t-1}...V^{s+1}$ for $t\geq s+1$. Given $s\geq 0$ the entries of $W^{t,s}$ converges to ${1\over N}$ as $t\rightarrow \infty$ with a geometric rate. In particular, for all $i,j\in\mathcal{N}$, one has $\left|W^{t,s}_{ij} -{1\over N}\right|\leq \Gamma \alpha^{t-s}$,
where $\Gamma=2(1+\zeta^{-\bar{T}})/(1-\zeta^{\bar{T}})$, $\alpha=(1-\zeta^{\bar{T}})^{1\over \bar{T}}$, and $\bar{T}=(N-1)T$.
\end{lemma}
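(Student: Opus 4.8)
The plan is to establish this along the classical lines for products of doubly stochastic matrices; this is, in essence, the ergodicity estimate of \cite{nedic2009distributed}, so we only sketch the argument. Write $P\triangleq\frac{1}{N}\one\one^\top$ for the target matrix. By Assumption~\ref{assum1}(i) every finite product $W^{t,s}=V^t\cdots V^{s+1}$ is again doubly stochastic, hence $PW^{t,s}=W^{t,s}P=P$ and the rows (and columns) of $W^{t,s}-P$ sum to zero; consequently it suffices to show that the rows of $W^{t,s}$ approach one another at a geometric rate. We quantify this through the Dobrushin-type coefficient $\tau(Q)\triangleq 1-\min_{i,i'}\sum_{j}\min\{Q_{ij},Q_{i'j}\}$, which for row-stochastic matrices satisfies $0\le\tau(Q)\le 1$ and is submultiplicative, $\tau(Q_1Q_2)\le\tau(Q_1)\tau(Q_2)$.

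The key step is a uniform strict positivity of the $\bar T$-step block products, with $\bar T=(N-1)T$: for every $s\ge 0$ and all $i,j\in\cN$ we claim $W^{s+\bar T,s}_{ij}\ge\zeta^{\bar T}$. Fix $i,j$. By Assumption~\ref{assum1}(iii), $(\cN,\cE^{\infty})$ is connected, so there is a path $i=i_0,i_1,\dots,i_L=j$ in $\cE^{\infty}$ with $L\le N-1$. Partition the window $\{s+1,\dots,s+\bar T\}$ into $N-1$ consecutive blocks of length $T$; by the bounded-intercommunication-interval property in (iii), the edge connecting $i_{\ell-1}$ and $i_\ell$ appears in $\cE^{t}$ for at least one step $t$ of the $\ell$-th block, and for that $t$ one has $V^{t}_{i_{\ell-1}i_\ell}\ge\zeta$ by (ii). Selecting, in the expansion of the matrix product $W^{s+\bar T,s}_{ij}$, the single path that ``parks'' at the current intermediate node via the self-loop weights $V^{t}_{ww}\ge\zeta$ (valid since $w\in\cN_w$) during all other steps and performs the $\ell$-th hop exactly when that edge is active, yields one nonnegative summand no smaller than $\zeta^{\bar T}$, proving the claim. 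A row-stochastic matrix all of whose entries are at least $\zeta^{\bar T}$ has $\sum_j\min\{Q_{ij},Q_{i'j}\}\ge N\zeta^{\bar T}\ge\zeta^{\bar T}$, so $\tau\!\left(W^{s+\bar T,s}\right)\le 1-\zeta^{\bar T}$.

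For general $t\ge s+1$, write $t-s=q\bar T+r$ with $q$ a nonnegative integer and $0\le r<\bar T$, and factor $W^{t,s}$ as a product of $q$ consecutive $\bar T$-step blocks times one (row-stochastic) remainder block; submultiplicativity together with $\tau(\cdot)\le 1$ on the remainder gives $\tau(W^{t,s})\le(1-\zeta^{\bar T})^{q}$. Since $q\ge(t-s)/\bar T-1$ and $\alpha=(1-\zeta^{\bar T})^{1/\bar T}$, this is at most $(1-\zeta^{\bar T})^{-1}\alpha^{\,t-s}$. Finally, applying the standard inequality $\max_i Q_{ij}-\min_i Q_{ij}\le\tau(Q)$ with $Q=W^{t,s}$ and using column-stochasticity (so that $\frac1N$ lies between $\min_i W^{t,s}_{ij}$ and $\max_i W^{t,s}_{ij}$) yields $|W^{t,s}_{ij}-\frac1N|\le\tau(W^{t,s})\le(1-\zeta^{\bar T})^{-1}\alpha^{\,t-s}$; a slightly cruder accounting of the remainder block and of the floor in $q$ lets one absorb all prefactors into the stated constant $\Gamma=2(1+\zeta^{-\bar T})/(1-\zeta^{\bar T})$. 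The main obstacle is the block-positivity step: it is the only place where Assumption~\ref{assum1}(ii)--(iii) genuinely interact, since one must route mass from each node to each other node along a length-$(N-1)$ path while holding it at intermediate nodes through self-loops until the needed edge becomes active, all inside a single window of $(N-1)T$ consensus steps; once this positivity is in hand, the geometric decay of $\tau$ and the passage to the entrywise bound are routine.
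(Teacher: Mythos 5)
Your argument is correct. Note, however, that the paper itself does not prove this lemma at all: it is quoted verbatim (constants included) from the cited reference \cite{nedic2009distributed}, so there is no in-paper proof to match. Your sketch is a legitimate self-contained derivation, and its two pillars are sound: (a) the uniform positivity $W^{s+\bar T,s}_{ij}\geq\zeta^{\bar T}$, obtained by routing along a path of length at most $N-1$ in $\cE^\infty$, parking at intermediate nodes via the self-loop weights ($V^t_{ii}\geq\zeta$ since $i\in\cN_i$) and hopping once per length-$T$ block where Assumption~\ref{assum1}(iii) guarantees the needed edge is active; and (b) submultiplicativity of the Dobrushin coefficient plus the column-stochasticity observation that $\tfrac1N$ lies between the min and max of each column, which converts the contraction of $\tau$ into the entrywise bound. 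This is the same mechanism as the original Ned\'ic--Ozdaglar proof (uniform lower bound on $(N-1)T$-step products, then geometric contraction of the column spread), only packaged through the ergodicity coefficient rather than their direct tracking of $\max_i-\min_i$ over windows; your route is arguably cleaner, and the constant you actually obtain, $(1-\zeta^{\bar T})^{-1}$, is smaller than the stated $\Gamma=2(1+\zeta^{-\bar T})/(1-\zeta^{\bar T})$, so the lemma follows a fortiori (the $q=0$ case, $t-s<\bar T$, is also covered since then the right-hand side exceeds $1$). Two details you glossed over but which are routine: the time-ordering of the hops within the product $V^t\cdots V^{s+1}$ (the path must be traversed consistently with the multiplication order, which the self-loops make possible), and the fact that the remainder block of length $r<\bar T$ is only bounded by $\tau\leq 1$ — both are handled exactly as you indicate.
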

Consider the $k$-th iteration of PDA as shown in \eqref{eq:pock-pd-2}. Instead of computing $\blambda^{k+1}$ exactly according to \eqref{eq:extended-Moreau}, we propose to approximate $\blambda^{k+1}$ with the help of Lemma~\ref{lem:approximation} and set $\bmu^{k+1}$ to this approximation. In particular, let $t_k$ be the total number of consensus steps done before $k$-th iteration of PDA, 
and let $q_k\geq 1$ be the number of consensus steps within iteration $k$. For $\bx=[x_i]_{i\in\cN}$, define
{\small
\begin{equation}
\label{eq:approx-average}
\cR^k(\bx)\triangleq\mathcal{P}_{\mathcal{B}}\left((W^{t_k+q_k,t_k}\otimes\id_n)~\bx\right)
\end{equation}
}%
to approximate $\mathcal{P}_C(\bx)$ in \eqref{eq:extended-Moreau}. Note that $\mathcal{R}^k(\cdot)$ can be computed in a distributed fashion requiring $q_k$ communications with the neighbors for each node. 
In particular, components of $\cR^k(\bx)$ can be computed at each node as follows:
{\small
\begin{equation}
\label{eq:component-approx}
\cR^k(\bx)=[\cR_i^k(\bx)]_{i\in\cN}\quad \hbox{\normalsize such that}\quad \cR_i^k(\bx)\triangleq\cP_{\cB_0}(\sum_{j\in\cN}W^{t_k+q_k,t_k}_{ij}x_j).
\end{equation}
}%
Moreover, the approximation error, 
$\mathcal{R}^k(\bx)-\mathcal{P}_C(\bx)$, for any $\bx$ can be bounded as in \eqref{eq:approx_error} due to non-expansive property of $\cP_{\cB}$ and using Lemma~\ref{lem:approximation}. 
From \eqref{eq:proj-2}, we get for all $i\in\cN$,
{\small
\begin{align}
\|\mathcal{R}_i^k(\bx)-\cP_{\cB_0}\big(\tilde{p}(\bx)\big)\|
&=\|\cP_{\cB_0}\big(\sum_{j\in\cN}W^{t_k+q_k,t_k}_{ij}x_j\big)-\cP_{\cB_0}\big(\tfrac{1}{N}\sum_{j\in\mathcal{N}}x_j\big)\| \nonumber\\
&\leq\|\sum_{j\in\cN}\big(W_{ij}^{t_k+q_k,t_k}-\tfrac{1}{N}\big)x_j\|\leq \sqrt{N}~\Gamma \alpha^{q_k}\norm{\bx}. \label{eq:approx_error}
\end{align}
}%
Thus, \eqref{eq:proj-2} implies that $\|\mathcal{R}^k(\bx)-\mathcal{P}_C(\bx)\| \leq N~\Gamma \alpha^{q_k}\norm{\bx}$.
Now, using \eqref{eq:approx-average}, we will approximate $\blambda^{k+1}$ computation in \eqref{eq:extended-Moreau} with the following update rule:
{\small
\begin{equation}\label{eq:inexact-rule}
{\bmu}^{k+1}\gets{\bmu}^k+\gamma(2{\bf x}^{k+1}-{\bf x}^k)-\gamma\cR^k\left(\tfrac{1}{\gamma}{\bmu}^k+2{\bf x}^{k+1}-{\bf x}^k\right).
\end{equation}
}%
and replace the exact computation in \eqref{eq:pock-pd-2}, $\bmu^{k+1}\gets\blambda^{k+1}$, with the inexact iteration rule in \eqref{eq:inexact-rule}. Thus, PDA iterations given in~\eqref{eq:pock-pd-2} can be computed inexactly, but in \emph{decentralized} way for dynamic connectivity, via the node-specific computations as in Algorithm~DPDA-D displayed in Fig.~\ref{alg:PDD} below.
\begin{figure}[htpb]
\centering
\framebox{\parbox{\columnwidth}{
{\small
\textbf{Algorithm DPDA-D} ( $\bx^{0},\btheta^0,\gamma,\{\tau_i,\kappa_i\}_{i\in\cN}$ ) \\[1.5mm]
Initialization: $\mu_i^0\gets\mathbf{0}$, \quad $i\in\cN$\\
Step $k$: ($k \geq 0$)\\
\text{ } 1. $x_i^{k+1}\gets\prox{\tau_i\rho_i}\left(x_i^k-\tau_i\bigg(\nabla f_i(x_i^k)+A_i^\top\theta^k_i+\mu_i^k\bigg)\right), \quad i\in\cN$  \\
\text{ } 2. ${\mu}_i^{k+1}\gets{\mu}_i^k+\gamma(2x_i^{k+1}-x_i^k)-\gamma\cR_i^k\left(\tfrac{1}{\gamma}{\bmu}^k+2{\bf x}^{k+1}-{\bf x}^k\right), \quad i \in \cN$\\
\text{ } 3. $\theta_i^{k+1}\gets\cP_{\cK_i^\circ}\bigg(\theta_i^k+\kappa_i\big(A_i(2x_i^{k+1}-x_i^k)-b_i\big)\bigg), \quad i \in \cN$\\
 }}}
\caption{\small Distributed Primal Dual Algorithm for Dynamic $\cG^t$ (DPDA-D)}
\label{alg:PDD}
\vspace*{-5mm}
\end{figure}

Next, we define the proximal error sequence $\{\be^k\}_{k\geq 1}$ as in \eqref{eq:prox-error-seq}, which will be used later for analyzing the convergence of Algorithm~DPDA-D displayed in Fig.~\ref{alg:PDD}.
{\small
\begin{equation}
\label{eq:prox-error-seq}
\be^{k+1} \triangleq \cP_{\cC}\left(\tfrac{1}{\gamma}{\bmu}^k+2{\bf x}^{k+1}-{\bf x}^k\right)-\cR^k\left(\tfrac{1}{\gamma}{\bmu}^k+2{\bf x}^{k+1}-{\bf x}^k\right);
\end{equation}
}%
hence, $\bmu^{k}=\blambda^{k}+\gamma \be^{k}$ for $k\geq 1$. In the rest, we assume that $\bmu^0=\mathbf{0}$. The following observation will also be useful to prove error bounds for DPDA-D iterate sequence. For each $i\in\cN$, the definition of $\cR_i^k$ in \eqref{eq:component-approx} implies that $\cR_i^k(\bx)\in\cB_0$ for all $\bx$; hence, from \eqref{eq:inexact-rule},
{\small
\begin{eqnarray*}
\|\mu_i^{k+1}\| \leq \|\mu_i^k+\gamma(2x_i^{k+1}-x_i^k)\|+\gamma\|\cR_i^k\big(\tfrac{1}{\gamma}{\mu}^k
+2{\bf x}^{k+1}-{\bf x}^k\big)\| \leq \|\mu_i^k\|+4\gamma B.
\end{eqnarray*}
}%
Thus, we trivially get the following bound on $\norm{\bmu^k}$:
{\small
\begin{equation}\label{eq:mu-bound}
\|\bmu^k\| \leq 4\gamma \sqrt{N}~B~k.   
\end{equation}
}%
Moreover, for any $\bmu$ and $\blambda$ we have that
{\small
\begin{equation} \label{eq:support-error}
\sigma_C({\bmu})=\sup_{{\bf x}\in C}\langle \blambda,{\bf x} \rangle+\langle \bmu-\blambda,{\bf x} \rangle \leq \sigma_C(\blambda)+\sqrt{N}~B~\|\bmu-\blambda\|.
\end{equation}
}%
\begin{theorem}\label{thm:dynamic-error-bounds}
Let $({\bf x}^*,\btheta^*,\blambda^*)$ be an arbitrary saddle-point for $\cL$ in \eqref{eq:dynamic-saddle}. Starting from $\bmu^0=\mathbf{0}$, $\btheta^0=\mathbf{0}$, and an arbitrary $\bx^0$, let $\{\bx^k,\bmu^k,\btheta^k\}_{k\geq 0}$ be the iterate sequence generated using Algorithm DPDA-D, displayed in Fig.~\ref{alg:PDD}, using $q_k=\sqrt[\leftroot{-3}\uproot{3}p]{k}$ consensus steps at the $k$-th iteration for all $k\geq 1$ for some rational $p\geq 1$. Let primal-dual step-sizes $\{\tau_i,\kappa_i\}_{i\in\cN}$ and $\gamma$ be chosen such that \eqref{eq:step-rule-dynamic} holds.
{\small
\begin{equation}\label{eq:step-rule-dynamic}
\Big(\frac{1}{\tau_i}-L_i-\gamma\Big)\frac{1}{\kappa_i} \geq \sigma^2_{\max}(A_i),\quad \forall\ i\in\mathcal{N}.
\end{equation}
}%
Then, the sequence $\{\bar{\bf x}^K\}$, where $\bar{\bf x}^K\triangleq {1\over K}\sum_{k=1}^K\bx^k$, has a limit point and every limit point has the form $\one_{|\cN|}\otimes x^*$ such that $x^*$ is an optimal solution to \eqref{eq:central_problem}. In particular,   
{\small
\begin{equation*}
\norm{\blambda^*}d_{\tilde{C}}(\bar{\bx}^K)+\sum_{i\in\cN}\norm{\theta_i^*} d_{\mathcal{K}_i}(A_i\bar{\bf x}_i^K-b_i)\leq \frac{\Theta_2+\Theta_3(K)}{K},\qquad |\Phi(\bar{\bx}^K)-\Phi({\bf x}^*)|\leq \frac{\Theta_2+\Theta_3(K)}{K},
\end{equation*}
}%
where 
$\Theta_2\triangleq 2\|\blambda^*\|\big({1\over \gamma}\|\blambda^*\|+\norm{\bx^0-\bx^*}\big)+\sum_{i\in\mathcal{N}}\bigg[{1\over \tau_i}\|x^*_i-x_i^0\|^2+{4\over \kappa_i}\|\theta^*_i\|^2\bigg]$, and $\Theta_3(K)\triangleq 
8 N^2B^2\Gamma~\sum_{k=1}^K\alpha^{q_k}\left[2\gamma k^2+\left(\gamma+\tfrac{\norm{\blambda^*}}{\sqrt{N}B}\right)k\right]$.
Moreover, $\sup_{K\in\integers_+}\Theta_3(K)<\infty$; hence, $\tfrac{1}{K}\Theta_3(K)=\cO(\tfrac{1}{K})$.
\end{theorem}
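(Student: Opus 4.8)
The plan is to treat Algorithm DPDA-D as PDA~\eqref{eq:PDA-x}--\eqref{eq:PDA-y} applied to the saddle-point reformulation~\eqref{eq:dynamic-saddle} with $T=[A^\top~\id_{n|\cN|}]^\top$, in which the dual proximal step associated with $\sigma_C$ is evaluated \emph{inexactly}: by~\eqref{eq:prox-error-seq} the stored vector satisfies $\bmu^{k}=\blambda^{k}+\gamma\be^{k}$, where $\blambda^{k}$ is the exact PDA iterate and $\be^{k}$ is the error of approximating $\cP_C$ by $\cR^{k-1}$. Before running the argument I would collect four facts: (i) since $A_0=\id_{n|\cN|}$, iterated Schur complements reduce $\mathbf{\bar{Q}}(A,\id_{n|\cN|})\succeq0$ to the block-diagonal condition $\mathbf{\bar{D}}_\tau-\gamma\,\id-A^\top\mathbf{D}_\kappa^{-1}A\succeq0$, i.e.\ exactly~\eqref{eq:step-rule-dynamic} (the analog of Lemma~\ref{lem:schur}); (ii) the crude growth bound $\|\bmu^k\|\le4\gamma\sqrt N\,B\,k$ of~\eqref{eq:mu-bound}; (iii) the estimate $\|\be^{k+1}\|\le N\Gamma\alpha^{q_k}\,\|\tfrac1\gamma\bmu^k+2\bx^{k+1}-\bx^k\|$ from~\eqref{eq:approx_error}, which with (ii) and the boundedness of $\bx^k$ (its blocks lie in the compact $\dom\rho_i$) gives $\|\be^{k+1}\|=\cO\!\big(N^{3/2}\Gamma B\,\alpha^{q_k}k\big)$; and (iv) the one-sided estimate~\eqref{eq:support-error}, $\sigma_C(\bmu)-\sigma_C(\blambda)\le\sqrt N\,B\,\|\bmu-\blambda\|$.

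Next I would rerun the telescoping proof of Theorem~\ref{thm:general-rate} (which follows~\cite{chambolle2015ergodic}) while carrying the error along. Substituting $\bmu^{k}=\blambda^{k}+\gamma\be^{k}$ into the optimality condition of the exact $\blambda^{k+1}$-subproblem, the usual three-point manipulations survive except that each step leaves a residual inner product $\tfrac1\gamma\langle\be^{k},\ \blambda^{k+1}-\blambda\rangle$ and, once~\eqref{eq:support-error} is used to pass from $\sigma_C(\bmu^{k})$ to $\sigma_C(\blambda^{k})$, an additive $\sqrt N\,B\,\|\be^{k}\|$. Summing over $k=1,\dots,K$ telescopes the Bregman terms into $\Theta(\bx,\btheta,\blambda)$ -- the same quadratic as in~\eqref{eq:saddle-rate}--\eqref{eq:simple-C-bound}, now with $A_0=\id$ and $\blambda^0=\bmu^0=\mathbf{0}$ -- and passing to ergodic averages by convexity in $\bx$ and concavity in $(\btheta,\blambda)$ yields
\[
\cL(\bar{\bx}^K,\btheta,\blambda)-\cL(\bx,\bar{\btheta}^K,\bar{\blambda}^K)\ \le\ \frac1K\Big(\Theta(\bx,\btheta,\blambda)+\sum_{k=1}^{K}c_k\,\|\be^{k}\|\Big),
\]
for coefficients $c_k$ (built from $\|\blambda\|$, $\|\bmu^k\|$ and $\sqrt N\,B$) that are $\cO(k)$; the dominant term, from the $\tfrac1\gamma\|\bmu^k\|$ piece via (ii)--(iii), is of order $N^2B^2\Gamma\,\alpha^{q_k}k^2$, and tracking constants for the eventual test point $(\tilde\btheta,\tilde\blambda)$ reproduces exactly $\Theta_3(K)=8N^2B^2\Gamma\sum_{k=1}^{K}\alpha^{q_k}\big[2\gamma k^2+(\gamma+\|\blambda^*\|/(\sqrt N\,B))\,k\big]$.

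From here the derivation mirrors the proof of Theorem~\ref{thm:static-error-bounds}. I would insert into the displayed inequality the dual test point $\tilde\theta_i\triangleq2\|\theta_i^*\|\,\tilde w_i^2/\|\tilde w_i^2\|\in\cK_i^\circ$, with $\tilde w_i\triangleq A_i\bar{\bx}_i^K-b_i$, $\tilde w_i^1\triangleq\cP_{\cK_i}(\tilde w_i)$, $\tilde w_i^2\triangleq\cP_{\cK_i^\circ}(\tilde w_i)$ (so $\sigma_{\cK_i}(\tilde\theta_i)=0$ and $\langle A_i\bar{\bx}_i^K-b_i,\tilde\theta_i\rangle=2\|\theta_i^*\|d_{\cK_i}(A_i\bar{\bx}_i^K-b_i)$), together with the normalized $\tilde\blambda$ aligned with the consensus residual of $\bar{\bx}^K$ and built exactly as $\tilde\blambda$ was built from $M\bar{\bx}^K$ in the static case; using the projection identity $\langle\cP_C(w),\,w-\cP_C(w)\rangle=\sigma_C(w-\cP_C(w))$ (the analog of the $\sigma_{\cK_i}(\tilde\theta_i)=0$ trick), the left-hand side then dominates $\Phi(\bar{\bx}^K)-\Phi(\bx^*)+2\big(\|\blambda^*\|d_{\tilde C}(\bar{\bx}^K)+\sum_{i\in\cN}\|\theta_i^*\|d_{\cK_i}(A_i\bar{\bx}_i^K-b_i)\big)$, while $\btheta^0=\bmu^0=\mathbf{0}$ and $\cP_C(\bx^*)=\bx^*$ collapse $\Theta(\bx^*,\tilde\btheta,\tilde\blambda)$ down to $\Theta_2$; this gives the upper bound $(\Theta_2+\Theta_3(K))/K$. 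Pairing it with the elementary lower bound from $\cL(\bar{\bx}^K,\btheta^*,\blambda^*)-\cL(\bx^*,\btheta^*,\blambda^*)\ge0$ (using $\theta_i^*\in\cK_i^\circ$, $\tilde w_i^1\in\cK_i$, $\sigma_C(\blambda^*)=\langle\blambda^*,\bx^*\rangle$) yields both stated estimates. Finally, for $q_k=\lceil k^{1/p}\rceil$ the series $\sum_{k\ge1}\alpha^{q_k}k^2$ converges -- substituting $k=u^p$ the summand behaves like $u^{2p}e^{-cu}$ with $c\triangleq-\ln\alpha>0$ -- so $\sup_K\Theta_3(K)<\infty$ and the bound is $\cO(1/K)$; the limit-point claim follows since $\{\bar{\bx}^K\}$ lies in the compact set $\dom\rho=\Pi_{i\in\cN}\dom\rho_i$ and, from the two estimates, any limit point has vanishing consensus residual, satisfies $A_ix^*-b_i\in\cK_i$ for all $i\in\cN$, and attains $\Phi(\bx^*)$, hence solves~\eqref{eq:central_problem}. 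The step I expect to be the main obstacle is the second -- carrying the error through the telescoping without degrading the $\cO(1/K)$ rate: one must check that the inexact dual prox contributes only the controllable $\tfrac1\gamma\langle\be^{k},\cdot\rangle$ terms, that~\eqref{eq:support-error} is applied in the right direction, and that every coefficient multiplying $\|\be^{k}\|$ is genuinely $\cO(k)$, so that $\Theta_3(K)$ remains summable in $K$.
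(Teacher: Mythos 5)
Your proposal follows essentially the same route as the paper: it reconstructs the paper's intermediate inexact saddle-point bound (Theorem~\ref{thm:dynamic-rate}, with the error term $S^k$ controlled via \eqref{eq:support-error} and the bounds \eqref{eq:mu-bound}, \eqref{eq:e-bound}), then mirrors the static proof with the same test points $\tilde\btheta$ and a $\tilde C^\circ$-aligned $\tilde\bmu$, and finishes with the summability of $\sum_k \alpha^{q_k}k^2$ exactly as in Lemma~\ref{lemsum}. Apart from minor cosmetic imprecisions (e.g.\ the unnecessary remark $\cP_C(\bx^*)=\bx^*$; the paper instead uses Cauchy--Schwarz with $\bmu^0=\mathbf{0}$, and the vanishing of $\sigma_C(\tilde\bmu)$ follows from $\tilde\bmu\in\tilde C^\circ$, $C\subseteq\tilde C$, $\zero\in C$), the argument is correct and matches the paper's proof.
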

\begin{remark}
Note that the suboptimality, infeasibility and consensus violation at the $K$-th iteration is $\cO(\Theta_3(K)/K)$, where $\Theta_3(K)$ denotes the error accumulation due to approximation errors, and $\Theta_3(K)$ can be bounded above for all $K\geq 1$ as $\Theta_3(K)\leq R \sum_{k=1}^K\alpha^{q_k} k^2$ for some constant $R>0$. Since $\sum_{k=1}^\infty\alpha^{\sqrt[\leftroot{-3}\uproot{3}p]{k}} k^2<\infty$ for any $p\geq 1$, if one chooses $q_k=\sqrt[\leftroot{-3}\uproot{3}p]{k}$ for $k\geq 1$, then the total number of communications per node until the end of $K$-th iteration can be bounded above by $\sum_{k=1}^K q_k=\cO(K^{1+1/p})$.
\end{remark}
In order to prove Theorem~\ref{thm:dynamic-error-bounds}, we first prove Theorem~\ref{thm:dynamic-rate} which help us to appropriately bound $\mathcal{L}(\bar{\bf x}^K,\by)-\mathcal{L}(\bx,\bar{\by}^K)$. Next we provide a technical result in Lemma~\ref{lemsum} to study the error accumulation.
\begin{theorem}\label{thm:dynamic-rate}
Let $\by=[\btheta^\top \bmu^\top]^\top$ such that $\bmu\in\reals^{n|\cN|}$, $\btheta=[\theta_i]_{i\in \mathcal{N}}\in\reals^m$, and $m\triangleq\sum_{i\in\cN}{m_i}$; and $\{\bx^k,\by^k\}_{k\geq 0}$ be the iterate sequence generated using Algorithm DPDA-D, displayed in Fig.~\ref{alg:PDD}, initialized from an arbitrary $\bx^0$ and $\by^0$; and $\{\be^k\}_{k\geq 1}$ be the proximal error sequence defined as in \eqref{eq:prox-error-seq}. Let $\bar{\bx}^{K}\triangleq\tfrac{1}{K}\sum_{k=1}^{K}\bx^k$, and $\bar{\by}^{K}\triangleq\tfrac{1}{K}\sum_{k=1}^{K}\by^k$ for $K\geq 1$. Suppose primal-dual step-sizes $\{\tau_i,\kappa_i\}_{i\in\cN}$ and $\gamma>0$ be chosen such that $\tau_i,\kappa_i>0$ and
{\small
\begin{align*}
\Big(\frac{1}{\tau_i}-L_i-\gamma\Big)\frac{1}{\kappa_i} \geq \sigma^2_{\max}(A_i),\quad \forall\ i\in\mathcal{N}.
\end{align*}
}%
Then, for any $\bx$ and $\by$, the following holds
{\small
\begin{align}\label{eq:dynamic-saddle-rate}
\mathcal{L}(\bar{\bf x}^K,\by)-\mathcal{L}(\bx,\bar{\by}^K) \leq &{1\over K} \big[D_x(\bx,\bx^0)+D_y(\by,\by^0)-\fprod{T(\bx-\bx^0),~\by-\by^0}\big] \nonumber \\
&+{1\over K}\sum_{k=0}^{K-1}\bigg[\|\be^{k+1}\| \big(2\gamma\sqrt{N}~B+\|\bmu-\bmu^{k+1}\|\big)\bigg],
\end{align}
}%
where $D_x$, $D_y$ are Bregman functions defined as in Definition~\ref{def:bregman}, $T=[A^\top A_0^\top]^\top$ for block-diagonal matrix $A\triangleq\diag([A_i]_{i\in\mathcal{N}})\in\reals^{m \times n|\cN|}$ and $A_0=\id_{n|\cN|}$.
\end{theorem}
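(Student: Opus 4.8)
The plan is to view Algorithm~DPDA-D as an \emph{inexact} instance of PDA applied to the saddle-point problem \eqref{eq:dynamic-saddle}, where the only source of inexactness is that the $\blambda$-update ($\bmu$-update) uses the approximate averaging operator $\cR^k$ in place of the exact projection $\cP_C$. Concretely, recall from \eqref{eq:prox-error-seq} that $\bmu^{k}=\blambda^{k}+\gamma\be^{k}$, where $\blambda^{k+1}=\prox{\gamma\sigma_C}(\bmu^k+\gamma(2\bx^{k+1}-\bx^k))$ is the \emph{exact} dual iterate. So the first step is to write the iterates of DPDA-D as the exact PDA iterates \eqref{eq:pock-pd-2} for \eqref{eq:dynamic-saddle} — whose $T=[A^\top~\id_{n|\cN|}]^\top$ and Bregman setup match Definition~\ref{def:bregman} — but with a perturbed $\blambda$-step. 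Since the step-size condition in the hypothesis is exactly \eqref{eq:step-rule-dynamic}, Lemma~\ref{lem:schur}-type reasoning (or a direct Schur-complement check, noting here $A_0^\top A_0 = \id$ so the ``$2\gamma d_i$'' term is replaced by ``$\gamma$'') gives $\mathbf{\bar Q}(A,\id_{n|\cN|})\succeq 0$; hence the machinery behind Theorem~\ref{thm:general-rate} is available.

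The core of the argument is to re-run the one-step descent inequality from the proof of Theorem~\ref{thm:general-rate} (as in \cite{chambolle2015ergodic}) but carrying the extra error term coming from the inexact prox. The standard telescoping argument produces, for the exact iterates, a bound of the form $\sum_{k=0}^{K-1}[\cL(\bx^{k+1},\by)-\cL(\bx,\by^{k+1})]\leq D_x(\bx,\bx^0)+D_y(\by,\by^0)-\fprod{T(\bx-\bx^0),\by-\by^0}$ after using $\mathbf{\bar Q}\succeq 0$ to absorb the cross terms. When the $\blambda$-step is inexact, the optimality condition for $\blambda^{k+1}$ that is normally used — namely $\fprod{\bmu^{k+1}-\bmu^k - \gamma(2\bx^{k+1}-\bx^k),~\blambda^{k+1}-\bmu} + \gamma(\sigma_C(\blambda^{k+1})-\sigma_C(\bmu))\le 0$ — must be replaced. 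Here the key substep is to estimate $\sigma_C(\bmu^{k+1})$: use \eqref{eq:support-error}, $\sigma_C(\bmu^{k+1})\le \sigma_C(\blambda^{k+1})+\sqrt N\,B\,\|\bmu^{k+1}-\blambda^{k+1}\| = \sigma_C(\blambda^{k+1})+\gamma\sqrt N\,B\,\|\be^{k+1}\|$, and track the discrepancy $\bmu^{k+1}-\blambda^{k+1}=\gamma\be^{k+1}$ through the inner products, producing terms of the form $\|\be^{k+1}\|(\gamma\sqrt N B + \|\bmu - \bmu^{k+1}\| + \text{const})$. Collecting constants and using that the factor multiplying $\|\be^{k+1}\|$ can be bounded by $2\gamma\sqrt N B + \|\bmu-\bmu^{k+1}\|$ gives the second line of \eqref{eq:dynamic-saddle-rate}; dividing by $K$ and invoking convexity of $\cL(\cdot,\by)$ and concavity of $\cL(\bx,\cdot)$ to pass to the ergodic averages $\bar\bx^K,\bar\by^K$ finishes it.

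The main obstacle I anticipate is bookkeeping the inexactness cleanly: one must identify \emph{exactly} which inequality in the Chambolle–Pock telescoping uses the prox-optimality of the dual step, substitute the approximate relation $\bmu^{k+1}=\blambda^{k+1}+\gamma\be^{k+1}$ there, and verify that all the newly-created error terms can be consolidated into the single clean expression $\|\be^{k+1}\|(2\gamma\sqrt N B+\|\bmu-\bmu^{k+1}\|)$ — in particular handling the term $\fprod{\be^{k+1},\bmu - \bmu^{k+1}}$ versus $\fprod{\be^{k+1},\bmu-\blambda^{k+1}}$ and the $\sigma_C$ mismatch via \eqref{eq:support-error} without losing a factor. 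A secondary care point is that $T=[A^\top~\id]^\top$ rather than $[A^\top~M^\top]^\top$, so the positive-semidefiniteness certificate and the cross-term $-\fprod{T(\bx-\bx^0),\by-\by^0}$ in \eqref{eq:dynamic-saddle-rate} are with respect to this $T$; everything else in the Theorem~\ref{thm:general-rate} template goes through verbatim. Note the bound \eqref{eq:mu-bound} on $\|\bmu^k\|$ and the error estimate \eqref{eq:approx_error}–\eqref{eq:prox-error-seq} are \emph{not} needed here — they are deferred to Theorem~\ref{thm:dynamic-error-bounds}, where \eqref{eq:dynamic-saddle-rate} is specialized at suitable $(\bx,\by)$ and the error sum $\Theta_3(K)$ is shown to be summable.
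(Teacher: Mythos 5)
Your proposal follows essentially the same route as the paper's proof: view DPDA-D as inexact PDA on \eqref{eq:dynamic-saddle} with $\bmu^{k+1}=\blambda^{k+1}+\gamma\be^{k+1}$, perturb the dual prox-optimality inequality and absorb the $\sigma_C$ mismatch via \eqref{eq:support-error}, certify $\mathbf{\bar{Q}}(A,\id_{n|\cN|})\succeq 0$ by the Schur-complement argument of Lemma~\ref{lem:schur} with $A_0=\id_{n|\cN|}$, then telescope and apply Jensen. The bookkeeping point you flag is settled in the paper exactly as you anticipate: writing $\blambda^{k+1}=\bmu^k+\gamma\bq^k-\gamma\bh^{k+1}$ with $\bh^{k+1}\triangleq\cP_C\big(\tfrac{1}{\gamma}\bmu^k+\bq^k\big)\in C$, so that $\|\bh^{k+1}\|\leq\sqrt{N}B$ supplies the second $\gamma\sqrt{N}B$ and Cauchy--Schwarz yields the consolidated factor $2\gamma\sqrt{N}B+\|\bmu-\bmu^{k+1}\|$.
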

\begin{proof}
For $k\geq 0$, let $\bq^k\triangleq 2{\bf x}^{k+1}-{\bf x}^k$. From strong convexity of $\sigma_C({\bmu})-\langle \bq^k,~{\bmu}\rangle+{1\over 2\gamma}\|{\bmu}-{\bmu}^k\|_2^2$ in $\bmu$ and the fact that ${\blambda}^{k+1}$ is its minimizer we conclude that
{\small
\begin{equation*}
\sigma_C(\bmu)-\langle \bq^k,~\bmu\rangle+\tfrac{1}{2\gamma}\|{\bmu}-{\bmu}^k\|^2 \geq \sigma_C({\blambda}^{k+1})-\langle \bq^k,~{\blambda}^{k+1}\rangle+\tfrac{1}{2\gamma}\|{\blambda}^{k+1}-{\bmu}^k\|^2+\tfrac{1}{2\gamma}\|\bmu-{\blambda}^{k+1}\|^2.
\end{equation*}}%
According to \eqref{eq:prox-error-seq}, ${\bmu}^{k}={\blambda}^{k}+\gamma \be^k$ for all $k\geq 1$; hence, from \eqref{eq:support-error} we have
{\small
\begin{eqnarray}
\lefteqn{\sigma_C({\bmu})-\langle \bq^k,~{\bmu}\rangle+\tfrac{1}{2\gamma}\|{\bmu}-{\bmu}^k\|_2^2 \geq}\nonumber\\
& & \sigma_C({\bmu}^{k+1})-\langle \bq^k,~{\bmu}^{k+1}\rangle +\tfrac{1}{2\gamma}\|{\bmu}^{k+1}-{\bmu}^k\|_2^2+\tfrac{1}{2\gamma}\|{\bmu}-{\bmu}^{k+1}\|_2^2- S^k,\label{eq:support-function-bound}
\end{eqnarray}
}%
where the error term $S^k$ is defined as
{\small
\begin{equation}
\label{eq:Sk}
S^k\triangleq\gamma \sqrt{N}~B \|\be^{k+1}\|-\gamma\|\be^{k+1}\|^2-\fprod{\be^{k+1},~\bmu-2\bmu^{k+1}+\bmu^k+\gamma\bq^k}.
\end{equation}
}%
If one customizes the steps of Lemma~\ref{lem:pda-lemma} for problem \eqref{eq:dynamic-saddle} using $\bmu^{k+1}$ instead of $\blambda^{k+1}$, it immediately follows from \eqref{eq:support-function-bound} that for all $k\geq 0$:
{\small
\begin{align}
\mathcal{L}&({\bf x}^{k+1},\by)-\mathcal{L}({\bf x},\by^{k+1})\leq S^k+ \left[D_x(\bx,\bx^k)+D_y(\by,\by^k)-\fprod{T(\bx-\bx^k),~\by-\by^k}\right] \label{eq:inexact-lagrangian-bound}\\
&-\left[D_x(\bx,\bx^{k+1})+D_y(\by,\by^{k+1})-\fprod{T(\bx-\bx^{k+1}),~\by-\by^{k+1}}\right]-\frac{1}{2}(\bz^{k+1}-\bz^k)^\top\mathbf{\bar{Q}}(\bz^{k+1}-\bz^k). \nonumber
\end{align}
}%
where $\bz^k=[{\bx^k}^\top {\by^k}^\top]^\top$, and $\mathbf{\bar{Q}}\triangleq\mathbf{\bar{Q}}(A,A_0)$ is defined as in Theorem~\ref{thm:general-rate} for $A_0=\id_{n|\cN|}$.

For $k\geq 0$, let $\bh^{k+1}\triangleq\mathcal{P}_C(\tfrac{1}{\gamma}\bmu^k+\bq^k)$; hence, $\blambda^{k+1}=\bmu^k+\gamma\bq^k-\gamma\bh^{k+1}$. Since $\bmu^{k+1}=\blambda^{k+1}+\gamma\be^{k+1}$, we have $\bmu^k+\gamma\bq^k-\bmu^{k+1}=\gamma(\bh^{k+1}-\be^{k+1})$; therefore, \eqref{eq:Sk} can be equivalently written as
\begin{equation}
\label{eq:Sk-bound}
S^k=\gamma\sqrt{N}~B\|\be^{k+1}\|-\fprod{\be^{k+1},~\bmu-\bmu^{k+1}+\gamma\bh^k}\leq\big(2\gamma\sqrt{N}~ B+\norm{\bmu-\bmu^{k+1}}\big)\norm{\be^{k+1}},
\end{equation}
where the inequality follows from Cauchy-Schwarz inequality and the fact that $\|\bh^{k+1}\|\leq \sqrt{N}~B$ since $\bh^{k+1}\in C$. Moreover, by setting $A_0=\id_{n|\cN|}$ instead of $M$ in the proof of Lemma~\ref{lem:schur}, one can easily show that $\mathbf{\bar{Q}}(A,A_0)\succeq 0$ when the condition in \eqref{eq:step-rule-dynamic} holds for all $i\in\cN$. Thus we can drop the last term in \eqref{eq:inexact-lagrangian-bound}. Therefore, similar to the proof of Theorem~\ref{thm:general-rate}, summing \eqref{eq:inexact-lagrangian-bound} over $k$ after bounding $S^k$ by \eqref{eq:Sk-bound}, dividing by $K$, and using Jensen's inequality gives the desired result. Indeed, since
$\mathbf{\bar{Q}}\succeq 0$, we can safely drop the last term, $D_x(\bx,\bx^{K})+D_y(\by,\by^{K})-\fprod{T(\bx-\bx^{K}),~\by-\by^{K}}\geq 0$, in the telescoping sum.
\end{proof}
The following lemma is slight extension of Proposition 3 in~\cite{chen2012fast}, where it is stated for $q=1$. The proof is omitted due to limited space.
\begin{lemma}\label{lemsum}
Let $\alpha\in(0,1)$, $q\geq 1$ is a rational number, and $d\in\integers_+$. Define $P(k,d)=\{\sum_{i=0}^d c_i k^i:\ c_i\in\reals\ i=1,\ldots,d \}$
denote the set of polynomials of $k$ with degree at most $d$. Suppose $p^{(k)}\in P(k,d)$ for $k\geq 1$, then
$\sum_{k=0}^{+\infty}p^{(k)}\alpha^{\sqrt[\leftroot{-3}\uproot{3}q]{k}}$ is finite.
\end{lemma}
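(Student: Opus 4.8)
The plan is to reduce the claim to a single monomial and then exploit that $k\mapsto\alpha^{\sqrt[q]{k}}$ decays faster than any negative power of $k$. Since $p^{(k)}=\sum_{i=0}^{d}c_i k^i$ for fixed coefficients $c_0,\dots,c_d$, for every $k\geq 1$ we have $|p^{(k)}|\leq \big(\sum_{i=0}^{d}|c_i|\big)\,k^{d}$; hence by the triangle inequality it is enough to show $\sum_{k=1}^{\infty}k^{d}\,\alpha^{\sqrt[q]{k}}<\infty$ for each fixed $d\in\integers_+$, the remaining term $k=0$ being the finite number $|c_0|$.

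Next I would write the general term as $k^{d}\alpha^{\sqrt[q]{k}}=\exp\!\big(d\ln k-c\,k^{1/q}\big)$ with $c\triangleq-\ln\alpha>0$. Because $q\geq 1$ forces $1/q>0$, the quantity $k^{1/q}$ tends to $+\infty$ faster than $\ln k$, i.e.\ $\ln k=o(k^{1/q})$ as $k\to\infty$; consequently there is an index $k_0$ with $c\,k^{1/q}\geq (d+2)\ln k$ for all $k\geq k_0$, which gives $k^{d}\alpha^{\sqrt[q]{k}}\leq k^{-2}$ for $k\geq k_0$. Comparing with the convergent series $\sum_k k^{-2}$ for the tail and adding the finitely many terms $k=1,\dots,k_0-1$ yields a finite total, which is the assertion.

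I do not expect a genuine obstacle here; the only points needing a little care are making the reduction to a monomial explicit and quantifying $\ln k=o(k^{1/q})$, both elementary. An alternative that sidesteps the asymptotic estimate is to group the indices by the value $n=\lfloor k^{1/q}\rfloor$: the block $\{k:\ \lfloor k^{1/q}\rfloor=n\}\subseteq[n^{q},(n+1)^{q})$ has at most $\cO(n^{q-1})$ elements by the mean value theorem, and on it $k^{d}\leq(n+1)^{qd}$ while $\alpha^{\sqrt[q]{k}}\leq\alpha^{n}$, so the whole sum is bounded by $\cO\big(\sum_{n\geq 1}n^{q(d+1)-1}\alpha^{n}\big)$, which converges by the ratio test. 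Either route settles the lemma; note that neither actually uses the rationality of $q$, a hypothesis only inherited from the application where $q_k=\sqrt[q]{k}$.
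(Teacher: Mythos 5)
Your argument is correct, and there is in fact nothing in the paper to compare it against: the authors explicitly omit the proof of this lemma, remarking only that it is a slight extension of Proposition~3 in~\cite{chen2012fast}, where the statement is given for $q=1$. Your reduction to the monomial case via $|p^{(k)}|\leq\big(\sum_{i=0}^d|c_i|\big)k^d$ and the comparison $k^d\alpha^{k^{1/q}}=\exp\big(d\ln k-c\,k^{1/q}\big)\leq k^{-2}$ for all large $k$ (using $\ln k=o(k^{1/q})$ with $c=-\ln\alpha>0$) is a complete and elementary proof; the alternative blocking argument by $n=\lfloor k^{1/q}\rfloor$, which bounds the sum by $\cO\big(\sum_{n\geq1}n^{q(d+1)-1}\alpha^n\big)$, is equally valid and handles the general $q\geq1$ just as cleanly. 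Your side remark is also accurate: rationality of $q$ plays no role in the summability claim; it only matters in the algorithmic context, where $q_k=\sqrt[p]{k}$ is meant to prescribe an (integer) number of consensus rounds, so in the application one would take, e.g., $\lceil k^{1/p}\rceil$, which only strengthens the bound since $\alpha\in(0,1)$.
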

Now we are ready to prove Theorem~\ref{thm:dynamic-error-bounds}.
\subsection{Proof of Theorem~\ref{thm:dynamic-error-bounds}}
Since \eqref{eq:approx_error} implies that $\|\mathcal{R}^k(\bx)-\mathcal{P}_C(\bx)\|\leq N~\Gamma \alpha^{q_k}\norm{\bx}$ for all $\bx$, and $\norm{\bx^k}\leq\sqrt{N}~B$ for $k\geq 1$, it follows from \eqref{eq:prox-error-seq} and \eqref{eq:mu-bound} that
{\small
\begin{align}
\label{eq:e-bound}
\|\be^{k+1}\|&=\|\cP_{\cC}\left(\tfrac{1}{\gamma}{\bmu}^k+2{\bf x}^{k+1}-{\bf x}^k\right)-\cR^k\left(\tfrac{1}{\gamma}{\bmu}^k+2{\bf x}^{k+1}-{\bf x}^k\right)\| \nonumber \\
&\leq N~\Gamma \alpha^{q_k}\norm{\tfrac{1}{\gamma}{\bmu}^k+2{\bf x}^{k+1}-{\bf x}^k}
\leq 4N^{\tfrac{3}{2}}B\Gamma \alpha^{q_k}(k+1).
\end{align}
}%

For $k\geq 1$, define $E^k(\bmu)\triangleq\|\be^{k}\| \big(2\gamma\sqrt{N}~B+\|\bmu-\bmu^{k}\|\big)$, which is the error term in \eqref{eq:dynamic-saddle-rate} due to approximating $\cP_{C}$ in the $k$-th iteration of the algorithm. The result of Theorem~\ref{thm:dynamic-rate} can be written more explicitly as follows: let $\bar{\bf x}^K\triangleq {1\over K}\sum_{k=1}^Kx_k$, $\bar{\bmu}^K\triangleq {1\over K}\sum_{k=1}^K\bmu^k$, and $\bar{\btheta}^K\triangleq {1\over K}\sum_{k=1}^K\btheta^k$, then for any ${\bf x},\bmu\in \mathbb{R}^{n|\mathcal{N}|}$, $\btheta\in \mathbb{R}^{m}$ for $m=\sum_{i\in\cN}m_i$, and for all $K\geq 1$, we have
{\small
\begin{align*} 
\mathcal{L}(\bar{\bf x}^K,\btheta,\bmu)-&\mathcal{L}({\bf x},\bar{\btheta}^K,\bar{\bmu}^K)\leq \Theta(\bx,\btheta,\bmu)/K,\\
\Theta(\bx,\btheta,\bmu)\triangleq&{1\over 2\gamma}\|\bmu-\bmu^0\|^2-\langle {\bf x}-{\bf x}^0,~\bmu-\bmu^0\rangle+\sum_{k=1}^{K}E^k(\bmu) \nonumber \\
&+\sum_{i\in\mathcal{N}}\bigg[{1\over 2\tau_i}\|x_i-x_i^0\|^2+{1\over 2\kappa_i}\|\theta_i-\theta_i^0\|^2-\langle A_i(x_i-x_i^0),\theta_i-\theta_i^0\rangle \bigg].\nonumber
\end{align*}
}%
Note that under the assumption in \eqref{eq:step-rule-dynamic}, Schur complement condition guarantees that
{\small
\begin{equation*}
\begin{bmatrix}
\frac{1}{\tau_i}\id_n &-A_i^\top\\
-A_i & \frac{1}{\kappa_i}\id_{m_i} \\
\end{bmatrix}
\preceq
\begin{bmatrix}
\frac{2}{\tau_i}\id_n & \mathbf{0}^\top\\
\mathbf{0} & \frac{2}{\kappa_i}\id_{m_i} \\
\end{bmatrix}.
\end{equation*}}%
Therefore,
{\small
\begin{eqnarray}
\label{eq:simple-C-bound-dynamic}
\Theta(\bx,\btheta,\bmu)\leq \sum_{i\in\mathcal{N}}\bigg[{1\over\tau_i}\|x_i-x_i^0\|^2+{1\over \kappa_i}\|\theta_i-\theta_i^0\|^2\bigg]+{1\over 2\gamma}\|\bmu-\bmu^0\|^2-\langle {\bf x}-{\bf x}^0,\bmu-\bmu^0\rangle+\sum_{k=1}^{K}E^k(\bmu).
\end{eqnarray}
}%
Let $({\bf x}^*,\btheta^*,\blambda^*)$ be an arbitrary saddle-point for $\cL$ in \eqref{eq:static-saddle}; hence, $\mathcal{L}(\bf x^*,\btheta^*,\blambda^*)=\Phi(\bx^*)$ and $\theta_i^*\in\cK_i^\circ$ for $i\in\cN$. As in the proof of Theorem~\ref{thm:static-error-bounds}, define $\tilde{\btheta}=[\tilde{\theta}_i]_{i\in\cN}$ such that $\tilde{\theta}_i\triangleq 2\|\theta_i^*\|
\big( \|\cP_{\mathcal{K}_i^\circ}(A_i\bar{x}_i^K-b_i)\|\big)^{-1}~\cP_{\mathcal{K}_i^\circ}(A_i\bar{x}_i^K-b_i)\in\cK_i^\circ$, which implies
{\small
\begin{equation}
\label{eq:tilde-theta-dynamic}
\langle A_i\bar{ x}_i^K -b_i, \tilde{\theta}_i \rangle=2\|\theta_i^*\| d_{\mathcal{K}_i}(A_i\bar{ x}_i^K -b_i).
\end{equation}}%

Define $\tilde{C}\triangleq\{{\bf x}\in \mathbb{R}^{n|\mathcal{N}|}: \exists \bar{x}\in\mathbb{R}^n \ \text{s.t.} \ x_i=\bar{x}, \forall i\in\mathcal{N} \}$. Note that $\tilde{C}$ is a closed convex cone, and the projection $\mathcal{P}_{\tilde{C}}(\bx)=\one\otimes \tilde{p}(\bx)$, where $\tilde{p}(\bx)$ is defined in \eqref{eq:proj-2}. Let $\tilde{\bmu}=2\norm{\blambda^*}{\mathcal{P}_{\tilde{C}^\circ}(\bar{\bf x}^k)\over \|\mathcal{P}_{\tilde{C}^\circ}(\bar{\bf x}^k)\|}\in\tilde{C}^\circ$, where $\tilde{C}^\circ$ denotes polar cone of $\tilde{C}$. Hence, it can be verified that $\langle \tilde{\bmu}, \bar{\bf x}^K\rangle=2\|\blambda^*\| d_{\tilde{C}}(\bar{\bf x}^K)$. 
Note that $\tilde{\bmu} \in \tilde{C}^\circ$ implies that $\sigma_{\tilde{C}}(\tilde{\bmu})=0$; moreover, we also have $C\subseteq \tilde{C}$; hence, $\sigma_C(\tilde{\bmu})\leq\sigma_{\tilde{C}}(\tilde{\bmu})=0$. Therefore, we can conclude that $\sigma_{C}(\tilde{\bmu})=0$ since $\zero\in C$. Together with \eqref{eq:tilde-theta-dynamic}, we get
{\small
\begin{equation}
\mathcal{L}(\bar{\bf x}^K,\tilde{\btheta},\tilde{\bmu})-\mathcal{L}(\bx^*,\btheta^*,\blambda^*)= \Phi(\bar{\bx}^K)-\Phi({\bx}^*)+2\left(\|\blambda^*\| d_{\tilde{C}}(\bar{\bf x}^K)+\sum_{i\in\cN} d_{\mathcal{K}_i}(A_i\bar{x}_i^K-b_i)\|\theta_i^*\|\right).
\end{equation}}%
Now we are going to upper bound $\Theta(\bx^*,\tilde{\btheta},\tilde{\bmu})$ using \eqref{eq:simple-C-bound-dynamic}. Since $\bmu^0=\mathbf{0}$, from Cauchy-Schwarz inequality,
{\small
\begin{equation}\label{eq:mu-x-product}
|\langle {\bf x}^*-{\bf x}^0,\tilde{\bmu}-\bmu^0\rangle| \leq 2\|\blambda^*\| \|{\bf x}^*-{\bf x}^0\|.
\end{equation}}%
Since $\btheta^*$ and $\blambda^*$ maximize the Lagrangian function at $\bx^*$, and $\btheta^0=\mathbf{0}$, it follows from \eqref{eq:tilde-theta-dynamic}, \eqref{eq:mu-x-product}, and \eqref{eq:simple-C-bound-dynamic} that
{\small
\begin{align*}
\label{eq:aux-upper-2}
\mathcal{L}(\bar{\bf x}^K,\tilde{\btheta},\tilde{\bmu})-\mathcal{L}(\bx^*,\btheta^*,\blambda^*)
&\leq \mathcal{L}(\bar{\bf x}^K,\tilde{\btheta},\tilde{\bmu})-\mathcal{L}(\bx^*,\bar{\btheta}^K,\bar{\bmu}^K)\\
&\leq \frac{1}{K}\Theta(\bx^*,\tilde{\btheta},\tilde{\bmu})\leq\frac{1}{K}\bigg(\Theta_2+\sum_{k=1}^{K}E^k(\tilde{\bmu})\bigg).
\end{align*}
}%
Therefore, we can conclude that
{\small
\begin{equation}
\label{eq:upper-bound-dynamic}
\Phi(\bar{\bx}^K)-\Phi({\bx}^*)+2\left(\|\blambda^*\| d_{\tilde{C}}(\bar{\bf x}^K)+\sum_{i\in\cN} d_{\mathcal{K}_i}(A_i\bar{x}_i^K-b_i)\|\theta_i^*\|\right)\leq \frac{1}{K}\bigg(\Theta_2+\sum_{k=1}^{K}E^k(\tilde{\bmu})\bigg),
\end{equation}}%
where we use $\mathcal{L}(\bx^*,\btheta^*,\blambda^*)=\Phi({\bx}^*)$ and the fact that $\sigma_{\mathcal{K}_i}(\tilde{\theta}_i)=0$ due to \eqref{eq:polar-support} since $\tilde{\theta}_i\in\cK_i^\circ$. Moreover, using \eqref{eq:e-bound} and \eqref{eq:mu-bound} we obtain that
{\small
\begin{align}
\sum_{k=1}^{K}E^k(\tilde{\bmu})&=\sum_{k=1}^K\|\be^{k}\| \big(2\gamma\sqrt{N}~B+\|\tilde{\bmu}-\bmu^{k}\|\big)
\leq\sum_{k=1}^K 4N^{\tfrac{3}{2}}B\Gamma \alpha^{q_k}k\big(2\gamma\sqrt{N}~B+\|\tilde{\bmu}-\bmu^{k}\|\big)\nonumber\\
&\leq 8N^2B^2\Gamma~\sum_{k=1}^K\alpha^{q_k}\left[2\gamma k^2
+\bigg(\gamma+\tfrac{\norm{\blambda^*}}{\sqrt{N}B}\bigg)k\right]=\Theta_3(K).\label{eq:E_mu_bound}
\end{align}}%
From Lemma~\ref{lemsum}, it follows that $\sup_{K\in\integers_+}\Theta_3(K)<\infty$.

Since $({\bf x}^*,\btheta^*,\blambda^*)$ is a saddle-point for $\cL$ in \eqref{eq:dynamic-saddle}, we clearly have $\mathcal{L}(\bar{\bf x}^K,\btheta^*,\blambda^*)-\mathcal{L}({\bf x}^*,\btheta^*,\blambda^*) \geq 0$; therefore,
{\small
\begin{equation}\label{eq:aux-lower-dynamic}
\Phi(\bar{\bf x}^K)-\Phi({\bf x}^*)+\langle \blambda^*,~\bar{\bf x}^K\rangle+\sum_{i\in\cN}\fprod{\theta_i^*,~A_i\bar{\bf x}_i^K-b_i}\geq 0.
\end{equation}
}%
As shown in the proof of Theorem~\ref{thm:static-error-bounds}, for all $i\in\cN$, we have
{\small
$$\langle A_i\bar{\bf x}_i^K-b_i, \theta_i^*\rangle\leq \|\theta^*_i\| d_{\mathcal{K}_i}(A_i\bar{\bf x}_i^K-b_i).$$}%
Similarly, we can also show that $\langle \blambda^*,~\bar{\bf x}^K\rangle\leq \|\blambda^*\| d_{\tilde{C}}(\bar{\bf x}^K)$.
Together with \eqref{eq:aux-lower-dynamic}, we conclude that
{\small
\begin{equation}\label{eq:lower-bound-dynamic}
\Phi(\bar{\bf x}^K)-\Phi({\bf x}^*)+\|\blambda^*\| d_{\tilde{C}}(\bar{\bf x}^K)+\sum_{i\in\cN}\|\theta^*_i\| d_{\mathcal{K}_i}(A_i\bar{\bf x}_i^K-b_i) \geq 0.
\end{equation}}%
By combining inequalities \eqref{eq:upper-bound-dynamic}, \eqref{eq:E_mu_bound} and \eqref{eq:lower-bound-dynamic} immediately implies the desired result.
\section{Dual consensus implementation}
\label{sec:dual}
In this section we study how to deal with resource allocation type problems of the following form:
{\small
\begin{equation}\label{eq:dual-implement}
\min_{\bxi} \bar{\Phi}(\bxi)\triangleq\sum_{i\in \cN}{\Phi_i(\xi_i)}\quad\hbox{s.t.}\quad\sum_{i\in\cN}R_i\xi_i-r_i \in \cK:\ y,
\end{equation}}%
where $\cK\subseteq \reals^m$ is a proper cone, $R_i\in\reals^{m\times n_i}$, and $r_i\in \reals^{m}$ are the problem data such that each node $i\in\cN$ only have access to $R_i$, $r_i$ and $\cK$ along with its objective $\Phi_i$ as defined in \eqref{eq:F_i}; and $\bxi=[\xi_i]_{i\in\cN}\in\reals^n$ is a long vector formed by local decisions $\xi_i\in\reals^{n_i}$ for each node $i\in\cN$, i.e., $n=\sum_{i\in\cN}n_i$, and $y\in\cK^\circ$ denotes the dual variable. One can reformulate $\eqref{eq:dual-implement}$ as the following saddle point problem:
{\small
\begin{equation}\label{eq:dual-saddle}
\min_{\substack{\bxi}}\max_{\substack{y}}\left\{\sum_{i\in \cN}{\Phi_i(\xi_i)}+\langle \sum_{i\in\cN}R_i\xi_i-r_i,~y \rangle-\sigma_{\cK}(y) \right\}.
\end{equation}}%
We assume that a dual optimal solution $y^*\in\cK^\circ$ exists and the duality gap is 0 for \eqref{eq:dual-implement}. Clearly these assumptions hold if \eqref{eq:dual-implement} satisfies Slater condition, i.e., there exists some $\bar{\bxi}\in\relint(\dom \bar{\Phi})$ such that $\sum_{i\in\cN}R_i\bar{\xi}_i-r_i \in \intr(\cK)$.
Suppose each node $i\in\cN$ has its own estimate $\theta_i\in\reals^m$ of a dual optimal solution. Let ${\by}=[y_i]_{i\in\cN}$; and given a bound $B_d>0$ such that $\norm{y^*}< B_d$, we define
{\small
\begin{equation}
\label{eq:Cd}
C_d\triangleq\{{\by}\in \mathbb{R}^{m|\mathcal{N}|}: \exists \bar{y}\in\mathbb{R}^m \ \mbox{ s.t. } \ y_i=\bar{y},\ \forall i\in\mathcal{N}, \ \|\bar{y}\|\leq B_d \}
\end{equation}}%
similarly as we defined $C$ in Section~\ref{sec:dynamic}. Since $\cG$ is {\it connected}, we can reformulate $\eqref{eq:dual-saddle}$ as a dual consensus formation problem:
{\small
\begin{equation}\label{eq:dual-saddle-dist}
\min_{\substack{\bxi}}\max_{\substack{\by\in C_d}}\ L(\bxi,\by)\triangleq\sum_{i\in \cN}\Big({\Phi_i(\xi_i)}+\langle R_i\xi_i-r_i,~y_i \rangle-\sigma_{\cK}(y_i) \Big),
\end{equation}}%
which follows from the definitions of $\sigma_{\cK}(\cdot)$ and $C_d$. Define $\cL:\reals^n\times\reals^{m|\cN|}\times\reals^{m|\cN|}\rightarrow\reals\cup\{+\infty\}$ such that
{\small
\begin{equation}
\label{eq:lagrangian-dual-implementation}
\cL(\bxi,\bw,\by)\triangleq\sum_{i\in \cN}\Big({\Phi_i(\xi_i)}+\langle R_i\xi_i-r_i,~y_i \rangle-\sigma_{\cK}(y_i)\Big)-\langle \bw,{\by} \rangle +\sigma_{C_d}(\bw).
\end{equation}}%
Note that for any $\bxi\in\dom\Phi$, we have $\max_{\by\in C_d}L(\bxi,\by)=\max_{\by}\min_{\bw}\cL(\bxi,\bw,\by)$; hence, $\eqref{eq:dual-saddle-dist}$ can be equivalently written as follows:
{\small
\begin{equation}\label{eq:dual-dist-problem}
\min_{\substack{\bxi}}\left\{\max_{\substack{\by}}\min_{\substack{\bw}}\cL(\bxi,\bw,\by)\right\}\
=\ \min_{\substack{\bxi,\bw}}\max_{\by}\cL(\bxi,\bw,\by),
\end{equation}}%
where the equality directly follows from Fenchel duality; indeed, interchanging inner $\max_{\btheta}$ and $\min_{\bw}$ is justified since $\intr(\cK)\neq\emptyset$ (see Theorem~3.3.5 in~\cite{borwein2010convex} -- second condition there holds because $C_d\cap \Pi_{i\in\cN}\intr(\cK)\neq\emptyset$). 

To solve \eqref{eq:dual-implement}, we can equivalently solve 
$\min_{\substack{\bxi,\bw}}\max_{\by}\cL(\bxi,\bw,\by)$, which is almost in the same form stated in Theorem~\ref{thm:general-rate}. Indeed, for $\bx=[\bxi^\top~\bw^\top]^\top$, 
let $\Phi(\bx)\triangleq\bar{\Phi}(\bxi)+\sigma_{C_d}(\bw)$, and $h(\by)\triangleq\sum_{i\in\cN}\sigma_{\mathcal{K}_i}(y_i)+\fprod{r_i,y_i}$; and define the block-diagonal matrix $R\triangleq\diag([R_i]_{i\in\mathcal{N}})\in\reals^{m|\cN|\times n}$ and
$T=
\begin{bmatrix}
R & -\id_{m|\cN|}
\end{bmatrix}
$. Moreover, given parameters $\gamma>0$, $\kappa_i,\tau_i>0$ for $i\in\cN$, similar to Definition~\ref{def:bregman}, let $\mathbf{D}_\tau=\diag([\frac{1}{\tau_i}\id_{n_i}]_{i\in\cN})$, $\mathbf{D}_\gamma=\frac{1}{\gamma}\id_{m|\cN|}$, and $\mathbf{D}_\kappa=\diag([\frac{1}{\kappa_i}\id_{m_i}]_{i\in\cN})$.
Defining $\psi_x(\bx)=\frac{1}{2}\bxi^\top\mathbf{D}_\tau\bxi+\frac{1}{2}\bw^\top\mathbf{D}_\gamma\bw$ and $\psi_y(\by)=\frac{1}{2}\by^\top\mathbf{D}_\kappa\by$ leads to the following Bregman distance functions: $D_x(\bx,\bar{\bx})=\frac{1}{2}\norm{\bxi-\bar{\bxi}}_{\mathbf{D}_\tau}^2+\frac{1}{2}\norm{\bw-\bar{\bw}}_{\mathbf{D}_\gamma}^2$, and $D_y(\by,\bar{\by})=\frac{1}{2}\norm{\by-\bar{\by}}_{\mathbf{D}_\kappa}^2$.

Therefore, given the initial iterates $\bx^0,\bw^0,\by^0$ and parameters $\gamma>0$, $\tau_i,\kappa_i>0$ for $i\in\cN$, choosing Bregman functions $D_x$ and $D_y$ as defined above, and setting $\nu_x=\nu_y=1$, PDA iterations given in \eqref{eq:PDA-x}-\eqref{eq:PDA-y} can be written explicitly as follows: Setting $\bv^0\gets\bw^0$, for $i\in\cN$ compute
{\small
\begin{align}\label{eq:pock-pd-3}
&\xi_i^{k+1}\gets\argmin_{\xi_i} \rho_i(\xi_i) +f_i(\xi_i^k)+\langle\nabla f_i(\xi_i^k),\xi_i-\xi_i^k \rangle + \langle R_i\xi_i-r_i,y_i^k\rangle +{1\over 2\tau_i}\|\xi_i-\xi_i^k\|^2_2,\nonumber \\
&\bw^{k+1}\gets\argmin_{\bw} \sigma_{C_d}(\bw)-\langle {\by}^k,~\bw \rangle +{1\over 2\gamma}\|\bw-\bv^k\|_2^2,\qquad \bv^{k+1}\gets\bw^{k+1},\\
&y_i^{k+1}\gets\argmin_{y_i}\sigma_{\mathcal{K}}(y_i) -\langle R_i(2\xi_i^{k+1}-\xi_i^k)-r_i,~y_i\rangle +\langle 2v_i^{k+1}-v_i^k,~y_i\rangle+{1\over 2\kappa_i}\|y_i-y_i^k\|_2^2.\nonumber
\end{align}
}
Using arguments similar to those in Section~\ref{sec:dynamic}, we can equivalently write $\bw$-update as follows:
{\small
\begin{equation}
\label{eq:exact-w-computation}
\bw^{k+1}\gets\bv^k+\gamma\by^k-\gamma~\mathcal{P}_{C_d}\left( \frac{1}{\gamma}\bv^k+\by^k\right).
\end{equation}}%
As in Section~\ref{sec:dynamic}, define $\cB_0\triangleq\{y\in\reals^m:\ \norm{y}\leq B_d\}$, and let $\mathcal{B}\triangleq\{\by: \ \|y_i\|\leq B_d,\ i\in\cN\}=\Pi_{i\in\cN}\cB_0$. Consider the $k$-th iteration of PDA as shown in \eqref{eq:pock-pd-3}. Instead of computing $\bw^{k+1}$ exactly according to \eqref{eq:exact-w-computation}, we approximate $\bw^{k+1}$ with the help of Lemma~\ref{lem:approximation} and set $\bv^{k+1}$ to this approximation. In particular, let $t_k$ be the total number of consensus steps done before iteration $k$ of PDA shown in \eqref{eq:pock-pd-3}, and let $q_k\geq 1$ be the number of consensus steps within iteration $k$. For $\bw=[w_i]_{i\in\cN}$, define
{\small
\begin{equation}
\label{eq:approx-average-dual}
\cR^k(\bw)\triangleq\mathcal{P}_{\mathcal{B}}\left((W^{t_k+q_k,t_k}\otimes\id_m)~\bw\right)
\end{equation}
}%
to approximate $\mathcal{P}_{C_d}(\bx)$ in \eqref{eq:exact-w-computation}. Thus, $\mathcal{R}^k(.)$ can be computed in a distributed fashion requiring $q_k$ communications with the neighbors for each node. 
In particular, components of $\cR^k(\bw)$ can be computed at each node as follows:
{\small
\begin{equation}
\label{eq:component-approx-dual}
\cR^k(\bw)=[\cR_i^k(\bw)]_{i\in\cN}\quad \hbox{such that}\quad \cR_i^k(\bw)\triangleq\cP_{\cB_0}(\sum_{j\in\cN}W^{t_k+q_k,t_k}_{ij}w_j).
\end{equation}
}%
Now, using \eqref{eq:approx-average-dual}, we approximate $\bw^{k+1}$ computation in \eqref{eq:exact-w-computation} with the following update rule:
{\small
\begin{equation}\label{eq:inexact-rule-dual}
{\bv}^{k+1}\gets{\bv}^k+\gamma\by^k-\gamma\cR^k\left(\tfrac{1}{\gamma}{\bv}^k+\by^k\right),
\end{equation}
}%
and replace the exact computation $\bv^{k+1}\gets\bw^{k+1}$ in \eqref{eq:pock-pd-3} with the inexact iteration rule in \eqref{eq:inexact-rule-dual}. Thus, PDA iterations given in~\eqref{eq:pock-pd-3}, for solving distributed resource allocation problem in \eqref{eq:dual-dist-problem}, can be computed inexactly, but in \emph{decentralized} way for dynamic connectivity, via the node-specific computations as in Algorithm~DPDA-R displayed in Fig.~\ref{alg:PDDual} below.
\begin{figure}[htpb]
\centering
\framebox{\parbox{\columnwidth}{
{\small
\textbf{Algorithm DPDA-R} ( $\bxi^{0},\by^0,\gamma,\{\tau_i,\kappa_i\}_{i\in\cN}$ ) \\[1.5mm]
Initialization: $v_i^0\gets\mathbf{0}$, \quad $i\in\cN$\\
Step $k$: ($k \geq 0$)\\
\text{ } 1. $\xi_i^{k+1}\gets\prox{\tau_i\rho_i}\bigg(\xi_i^k-\tau_i\bigg(\nabla f_i(\xi_i^k)+R_i^\top y^k_i\bigg)\bigg), \quad i\in\cN$  \\
\text{ } 2. $v_i^{k+1}\gets v_i^k+\gamma y_i^k-\gamma\cR_i^k\left(\tfrac{1}{\gamma}\bv^k+\by^k\right), \quad i \in \cN$\\
\text{ } 3. $y_i^{k+1}\gets\cP_{\cK_i^\circ}\bigg(\theta_i^k+\kappa_i\Big(R_i(2x_i^{k+1}-x_i^k)-(2v_i^{k+1}+v_i^k)-b_i\Big)\bigg), \quad i \in \cN$\\
 }}}
\caption{\small Distributed Primal Dual Algorithm for dual implemtation}
\label{alg:PDDual}

\end{figure}

\begin{theorem}\label{col:dual-error-bound}
Let $({\bf \bxi}^*,\bw^*,\by^*)$ be an arbitrary saddle-point for $L$ in \eqref{eq:lagrangian-dual-implementation}. Starting from $\bv^0=\mathbf{0}$, $\by^0=\mathbf{0}$, and an arbitrary $\bxi^0$, let $\{\bxi^k,\bv^k,\by^k\}_{k\geq 0}$ be the iterate sequence generated using Algorithm DPDA-R, displayed in Fig.~\ref{alg:PDDual}, using $q_k=\sqrt[\leftroot{-3}\uproot{3}p]{k}$ consensus steps at the $k$-th iteration for all $k\geq 1$ for some rational $p\geq 1$. Let primal-dual step-sizes $\{\tau_i,\kappa_i\}_{i\in\cN}$ and $\gamma$ be chosen such that $\tau_i>\frac{1}{L_i}$, $\kappa_i>\frac{1}{\gamma}$, and \vspace*{-3mm}
{\small
\begin{equation}\label{eq:step-rule-dual}
\left({1\over \tau_i}-L_i\right)\left( {1\over \kappa_i}-\gamma\right) \geq \sigma_{\max}^2(R_i),\quad \forall\ i\in\mathcal{N}.
\end{equation}
}%
Then, the sequence $\{\bar{\bxi}^K\}$, where $\bar{\bxi}^K\triangleq {1\over K}\sum_{k=1}^K\bxi^k$, has a limit point $\bxi^*$ which is an optimal solution to \eqref{eq:dual-implement}. In particular, the following holds
{\small
\begin{equation*}
\norm{\by^*}d_{\mathcal{K}}\left( \sum_{i\in\cN}R_i\bar{\bxi}_i^K-r_i\right)\leq \frac{\Theta_4+\Theta_5(K)}{K},\qquad |\Phi(\bar{\bxi}^k)-\Phi({\bxi}^*)|\leq \frac{\Theta_4+\Theta_5(K)}{K},
\end{equation*}
}%
where 
$\Theta_4\triangleq\|\bw^*\|\big({1\over 2\gamma}\|\bw^*\|+2\norm{\by^*}\big)+\sum_{i\in\mathcal{N}}\bigg[{1\over \tau_i}\|\xi^*_i-\xi_i^0\|^2+{4\over \kappa_i}\|y^*_i\|^2\bigg]$, and 
$\Theta_5(K)\triangleq 2 N^2B^2_d\Gamma \sum_{k=1}^K \alpha^{q_k}k \left(2\gamma (k+1)+{\norm{\bw^*}\over \sqrt{N}B_d}\right)$.
Moreover, $\sup_{K\in\integers_+}\Theta_5(K)<\infty$; hence, $\tfrac{1}{K}\Theta_5(K)=\cO(\tfrac{1}{K})$.
\end{theorem}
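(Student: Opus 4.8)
The plan is to transcribe the argument for Algorithm DPDA-D (Theorem~\ref{thm:dynamic-error-bounds}), the only structural difference being that the auxiliary consensus variable $\bw$ is now a \emph{primal} block (paired with $\sigma_{C_d}$) rather than a dual one, so the roles of $\bw$ and $\by$ are swapped relative to $\blambda$ and $\btheta$ there. First I would note that $\{\bxi^k,\bv^k,\by^k\}$ generated by DPDA-R is exactly the inexact PDA sequence for $\min_{\bxi,\bw}\max_{\by}\cL(\bxi,\bw,\by)$ in \eqref{eq:lagrangian-dual-implementation} with $T=[R,\,-\id_{m|\cN|}]$, the inexactness sitting only in the $\bw$-prox, with proximal error $\be^{k+1}=\cP_{C_d}(\tfrac1\gamma\bv^k+\by^k)-\cR^k(\tfrac1\gamma\bv^k+\by^k)$ and $\bv^k=\bw^k+\gamma\be^k$. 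Then, as in Lemma~\ref{lem:schur}, I would check that \eqref{eq:step-rule-dual} is exactly the condition making the associated matrix
\[
\begin{bmatrix}\mathbf{\bar{D}}_\tau & \zero & -R^\top\\ \zero & \mathbf{D}_\gamma & \id\\ -R & \id & \mathbf{D}_\kappa\end{bmatrix}
\]
positive semidefinite; minimizing its quadratic form over the middle ($\bw$) coordinate first removes the $\mathbf{D}_\gamma$ block while replacing $\tfrac1{\kappa_i}$ by $\tfrac1{\kappa_i}-\gamma$, and a block-wise Schur complement then gives $(\tfrac1{\tau_i}-L_i)(\tfrac1{\kappa_i}-\gamma)\ge\sigma_{\max}^2(R_i)$ for all $i\in\cN$.

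Next I would establish the analogue of Theorem~\ref{thm:dynamic-rate}: using strong convexity of the $\bw$-subproblem, the identity $\bv^k=\bw^k+\gamma\be^k$, the estimate $\sigma_{C_d}(\bw')\le\sigma_{C_d}(\bw)+\sqrt N\,B_d\,\norm{\bw'-\bw}$ (the $C_d$-analogue of \eqref{eq:support-error}, since $C_d$ lies in the ball of radius $\sqrt N B_d$), the customized one-step PDA inequality (Lemma~\ref{lem:pda-lemma}), and the positive semidefiniteness just established to discard the telescoping remainder, one obtains for all $\bxi,\bw,\by$
\[
\cL(\bar{\bxi}^K,\bar{\bw}^K,\by)-\cL(\bxi,\bw,\bar{\by}^K)\le\tfrac1K\big[D_x(\bx,\bx^0)+D_y(\by,\by^0)-\fprod{T(\bx-\bx^0),\by-\by^0}\big]+\tfrac1K\sum_{k=0}^{K-1}\norm{\be^{k+1}}\big(2\gamma\sqrt N B_d+\norm{\bw-\bw^{k+1}}\big).
\]
The error sum is then controlled as in \eqref{eq:e-bound}--\eqref{eq:E_mu_bound}: \eqref{eq:approx_error} (with $\cB_0$ the ball of radius $B_d$) gives $\norm{\be^{k+1}}\le N\Gamma\alpha^{q_k}\norm{\tfrac1\gamma\bv^k+\by^k}$, the crude recursion for $\bv^{k+1}$ (using $\cR^k_i(\cdot)\in\cB_0$ and boundedness of the dual iterates) gives $\norm{\bv^k}=\cO(\gamma\sqrt N B_d\,k)$, hence $\norm{\be^{k+1}}=\cO(N^{3/2}B_d\Gamma\alpha^{q_k}k)$; with the certificate $\bw=\bw^*$ this makes $\sum_{k=1}^K\norm{\be^k}(2\gamma\sqrt N B_d+\norm{\bw^*-\bv^k})\le\Theta_5(K)$, and $\sup_K\Theta_5(K)<\infty$ by Lemma~\ref{lemsum} since $q_k=\sqrt[p]{k}$ and $\Theta_5(K)$ is a geometrically weighted sum of a degree-two polynomial in $k$.

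Finally, I would refine this into the stated bounds exactly as in Theorem~\ref{thm:static-error-bounds}. Let $\tilde w\triangleq\sum_{i\in\cN}(R_i\bar{\bxi}_i^K-r_i)$, and build from the Moreau decomposition of $\cK$ a dual certificate $\tilde\by=[\tilde y_i]$ with each $\tilde y_i\in\cK^\circ$ proportional to $\cP_{\cK^\circ}(\tilde w)$, normalized so that $\sum_i\fprod{R_i\bar{\bxi}_i^K-r_i,\tilde y_i}=2\norm{\by^*}\,d_{\cK}(\tilde w)$ and $\sigma_{\cK}(\tilde y_i)=0$; then evaluate the rate bound at $\bxi=\bxi^*$, $\bw=\bw^*$, $\by=\tilde\by$. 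For the upper estimate I would use $\cL(\bxi^*,\bw^*,\bar{\by}^K)\le\cL(\bxi^*,\bw^*,\by^*)=\Phi(\bxi^*)$ (the saddle inequality plus complementary slackness, the latter also giving $\sigma_{C_d}(\bw^*)=\fprod{\bw^*,\by^*}$), bound $D_x$, $D_y$ and the $R_i$-cross term by the Schur trick of \eqref{eq:simple-C-bound} to assemble $\Theta_4$, and use $\sigma_{C_d}(\bar{\bw}^K)-\fprod{\bar{\bw}^K,\tilde\by}\ge 0$ (mirroring the handling of $\sigma_C(\tilde\bmu)$ in Theorem~\ref{thm:dynamic-error-bounds}, valid once $B_d$ dominates $\norm{\tilde\by}/\sqrt N$). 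For the lower estimate I would use $\cL(\bar{\bxi}^K,\bar{\bw}^K,\by^*)\ge\cL(\bxi^*,\bw^*,\by^*)$ together with $\fprod{\bar{\bw}^K,\by^*}\le\sigma_{C_d}(\bar{\bw}^K)$ (as $\by^*\in C_d$) and the cone inequality for the coupling term, giving $\Phi(\bar{\bxi}^K)-\Phi(\bxi^*)+\norm{\by^*}d_{\cK}(\tilde w)\ge 0$. Combining the two inequalities yields both claimed bounds, and the limit-point assertion follows as in Theorem~\ref{thm:dynamic-error-bounds}. I expect the main obstacle to be the bookkeeping in this last step under the swapped primal/dual roles --- certifying the conic infeasibility through $\by$ while $\bw$ enters only via $\sigma_{C_d}$ --- together with, in the previous step, guaranteeing that the dual iterates $\by^k$ (hence the errors $\be^k$) stay bounded at exactly the rate that keeps $\Theta_5(K)$ summable.
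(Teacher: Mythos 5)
Your route is the one the paper itself intends: the paper offers no standalone proof of this theorem beyond the remark that it follows by repeating the Section~\ref{sec:dynamic} analysis with the consensus block moved to the primal side, together with Lemma~\ref{lem:schur-dual} establishing $\mathbf{\bar{Q}}\succeq 0$. Your transcription of Theorem~\ref{thm:dynamic-rate} with the proximal error sitting in the $\bw$-prox, your verification of positive semidefiniteness by partially minimizing the quadratic form over the $\bw$-block (equivalent to the paper's nested Schur complements in Lemma~\ref{lem:schur-dual}), the certificate construction via the Moreau decomposition of $\cK$, the use of $\by^*\in C_d$ for the lower bound, and the appeal to Lemma~\ref{lemsum} all match the paper's template from Theorems~\ref{thm:static-error-bounds} and~\ref{thm:dynamic-error-bounds}.

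There is, however, one genuine gap, exactly at the point you yourself flag as the anticipated obstacle: your error accumulation needs $\norm{\bv^k}=\cO(\gamma\sqrt{N}B_d\,k)$ and $\norm{\tfrac{1}{\gamma}\bv^k+\by^k}=\cO(\sqrt{N}B_d\,k)$, and you obtain these ``using $\cR^k_i(\cdot)\in\cB_0$ and boundedness of the dual iterates.'' That boundedness is asserted, not proved, and it does not transfer from Section~\ref{sec:dynamic}: there, the argument of $\cR^k$ was $\tfrac{1}{\gamma}\bmu^k+2\bx^{k+1}-\bx^k$, and the linear growth \eqref{eq:mu-bound} of $\norm{\bmu^k}$ followed from compactness of $\dom\rho_i$ plus $\cR^k(\cdot)\in\cB$. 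Here the recursion $\bv^{k+1}=\bv^k+\gamma\by^k-\gamma\cR^k(\tfrac{1}{\gamma}\bv^k+\by^k)$ involves $\by^k$, whose update is a projection onto the \emph{unbounded} cone $\cK^\circ$; the set $C_d$ constrains the auxiliary variable $\bw$ through $\sigma_{C_d}$, not the iterates $y_i^k$, so nothing confines $y_i^k$ to the $B_d$-ball. The difficulty is also circular: bounding $\be^k$ requires controlling the iterates, while any Fej\'er-type boundedness argument for the inexact PDA requires controlling $\be^k$. Unless you close this loop --- e.g., by additionally projecting $y_i^{k+1}$ onto the $B_d$-ball (legitimate in view of $\norm{y^*}<B_d$, but it modifies $h$ and hence the algorithm), or by separately establishing at most polynomial growth of $\norm{\by^k}$ (which, via Lemma~\ref{lemsum}, would still give $\sup_{K\in\integers_+}$ of the error sum finite, but with a constant different from the stated $\Theta_5(K)$) --- the specific bound $\Theta_5(K)$ in the statement is not justified by your argument.
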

\begin{remark}
The proof of this result can be given using similar arguments as in Section~\ref{sec:dynamic} after a few modifications. Indeed, recall that to prove Theorem~\ref{thm:dynamic-error-bounds}, we first prove Theorem~\ref{thm:dynamic-rate}. Similarly, we need to prove an analogous result to Theorem~\ref{thm:dynamic-rate} which help us to appropriately bound $\mathcal{L}(\bar{\bxi}^K,\bar{\bv}^K,\by)-\mathcal{L}(\bx,\bv,\bar{\by}^K)$. This analogous result requires
{\small $\mathbf{\bar{Q}}\triangleq
\begin{bmatrix}
\mathbf{\bar{D}}_\tau & 0& -R^\top\\
0 & \mathbf{D}_\gamma & \bI\\
-R & \bI & \mathbf{D}_\kappa
\end{bmatrix}\succeq 0$}, where $\mathbf{\bar{D}}_\tau\triangleq\mathbf{D}_\tau-\diag([L_i\id_{n_i}]_{i\in\cN})=\diag([({1\over \tau_i} -L_i)\id_{n_i}]_{i\in \mathcal{N}})$.
\end{remark}
In the next lemma, we show that if $\{\tau_i,\kappa_i\}_{i\in\cN}$ and $\gamma$ are chosen satisfying the condition \eqref{eq:step-rule-dual} in Theorem~\ref{col:dual-error-bound}, then $\mathbf{\bar{Q}}\succeq 0$ holds.
\begin{lemma}\label{lem:schur-dual}
Given $\{\tau_i,\kappa_i\}_{i\in\mathcal{N}}$ and $\gamma$ such that $\gamma>0$, and $\tau_i>0$, $\kappa_i>0$ for $i\in\cN$. Then $\mathbf{\bar{Q}}\succeq 0$ if $\{\tau_i,\kappa_i\}_{i\in\mathcal{N}}$ and $\gamma$ are chosen such that $\tau_i>\frac{1}{L_i}$,  $\kappa_i>\frac{1}{\gamma}$, and \eqref{eq:step-rule-dual} holds. \vspace*{-2mm}
\end{lemma}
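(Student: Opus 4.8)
The plan is to verify $\mathbf{\bar{Q}}\succeq0$ by two successive Schur-complement reductions, exactly in the spirit of the proof of Lemma~\ref{lem:schur}; the only structural difference is the pair of off-diagonal identity blocks coupling the $\bw$- and $\by$-coordinates, and the one thing to be careful about is the order in which the two pivot blocks are eliminated. I would first record that the hypotheses of the lemma guarantee $\tfrac1{\tau_i}-L_i>0$ and $\tfrac1{\kappa_i}-\gamma>0$ for every $i\in\cN$; consequently $\mathbf{\bar{D}}_\tau\succ0$, $\mathbf{D}_\gamma=\tfrac1\gamma\id\succ0$, and $\mathbf{D}_\kappa-\gamma\id=\diag\big([(\tfrac1{\kappa_i}-\gamma)\id]_{i\in\cN}\big)\succ0$, and these three blocks are the pivots I will use.

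First I would permute the rows and columns of $\mathbf{\bar{Q}}$ into the block order $(\bxi,\by,\bw)$, so that $\mathbf{D}_\gamma$ sits in the lower-right corner, and eliminate it. Since $\mathbf{D}_\gamma\succ0$, the Schur-complement criterion gives that $\mathbf{\bar{Q}}\succeq0$ if and only if
\begin{equation*}
\begin{bmatrix}\mathbf{\bar{D}}_\tau & -R^\top\\ -R & \mathbf{D}_\kappa\end{bmatrix}-\begin{bmatrix}\mathbf{0}\\ \id\end{bmatrix}\gamma\begin{bmatrix}\mathbf{0}&\id\end{bmatrix}=\begin{bmatrix}\mathbf{\bar{D}}_\tau & -R^\top\\ -R & \mathbf{D}_\kappa-\gamma\id\end{bmatrix}\succeq0 .
\end{equation*}
In other words, eliminating the $\bw$-block merely shifts $\mathbf{D}_\kappa$ down to $\mathbf{D}_\kappa-\gamma\id$; this is precisely what turns the $\tfrac1{\kappa_i}$ appearing in Lemma~\ref{lem:schur} into the combination $\tfrac1{\kappa_i}-\gamma$ appearing in \eqref{eq:step-rule-dual}.

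Next I would eliminate $\mathbf{D}_\kappa-\gamma\id$, which is positive definite by the above. The Schur-complement criterion applied once more shows that the displayed $2\times2$ matrix is positive semidefinite if and only if $\mathbf{\bar{D}}_\tau-R^\top(\mathbf{D}_\kappa-\gamma\id)^{-1}R\succeq0$. Because $R=\diag([R_i]_{i\in\cN})$ and $\mathbf{D}_\kappa-\gamma\id$ are block diagonal, so is this matrix, with $i$-th diagonal block $\big(\tfrac1{\tau_i}-L_i\big)\id_{n_i}-\big(\tfrac1{\kappa_i}-\gamma\big)^{-1}R_i^\top R_i$. Using $R_i^\top R_i\preceq\sigma^2_{\max}(R_i)\,\id_{n_i}$, each such block is positive semidefinite provided $\big(\tfrac1{\tau_i}-L_i\big)\big(\tfrac1{\kappa_i}-\gamma\big)\geq\sigma^2_{\max}(R_i)$, i.e. provided \eqref{eq:step-rule-dual} holds. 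Assembling these equivalences yields $\mathbf{\bar{Q}}\succeq0$.

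I do not anticipate a genuine obstacle here — the whole argument is routine linear algebra once the pivot order is fixed. The only subtlety worth flagging is that each Schur pivot must be checked to be positive definite before it is used, and it is exactly this requirement that forces the auxiliary conditions on $\tau_i$ and $\kappa_i$ (equivalently $\tfrac1{\tau_i}-L_i>0$ and $\tfrac1{\kappa_i}-\gamma>0$) rather than merely the product inequality \eqref{eq:step-rule-dual}, which on its own would also hold in the regime where both factors are negative — a regime in which $\mathbf{\bar{Q}}$ is in fact indefinite.
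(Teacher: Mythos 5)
Your proof is correct and takes essentially the same route as the paper's: two successive Schur-complement reductions combined with the block-diagonal structure of $R$, $\mathbf{\bar{D}}_\tau$, $\mathbf{D}_\kappa$, the only difference being the elimination order — you pivot on $\mathbf{D}_\gamma$ first, which makes $\mathbf{D}_\kappa-\gamma\id$ and hence the factor $\tfrac{1}{\kappa_i}-\gamma$ appear immediately, whereas the paper pivots on $\mathbf{D}_\kappa$ first and then on $\mathbf{D}_\gamma-\mathbf{D}_\kappa^{-1}$, reaching the same blockwise condition after simplifying $\kappa_i+\kappa_i^2(\tfrac{1}{\gamma}-\kappa_i)^{-1}=(\tfrac{1}{\kappa_i}-\gamma)^{-1}$. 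Your reading of the step-size hypotheses as $\tfrac{1}{\tau_i}-L_i>0$ and $\tfrac{1}{\kappa_i}-\gamma>0$ is the one the paper's own argument actually uses (its second pivot $\mathbf{D}_\gamma-\mathbf{D}_\kappa^{-1}\succ 0$ is exactly $\tfrac{1}{\kappa_i}>\gamma$), so the literal inequalities ``$\tau_i>\tfrac{1}{L_i}$, $\kappa_i>\tfrac{1}{\gamma}$'' in the statement should be read as typos for these conditions — consistent with your closing observation that \eqref{eq:step-rule-dual} alone, with both factors negative, would not yield $\mathbf{\bar{Q}}\succeq 0$.
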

\begin{proof}
Since $\mathbf{D}_\kappa\succ 0$, Schur complement condition implies that $\bar{\bQ}\succeq 0$ if and only if
{\small
\begin{equation*}
B-\begin{bmatrix}-R^\top \\ \bI_{n|\cN|} \end{bmatrix}\bD_\kappa^{-1} \begin{bmatrix} -R & \bI_{n|\cN|}\end{bmatrix}\succeq 0,\quad \hbox{where}\quad B=\begin{bmatrix} \bar{\bD}_\tau & 0 \\ 0 & \bD_\gamma\end{bmatrix};
\end{equation*}}%
hence, the above condition can be equivalently written as
{\small
\begin{equation}\label{matrix-dual}
\begin{bmatrix} \bar{\bD}_\tau-R^\top\bD^{-1}_\kappa R & R^\top \bD_\kappa^{-1} \\ \bD^{-1}_\kappa R & \bD_\gamma-\bD_\kappa^{-1} & \end{bmatrix}\succeq 0.
\end{equation}}%
Again using Schur complement one more time, one can conclude that \eqref{matrix-dual} holds if $\bar{\bD}_\tau-R^\top\bD^{-1}_\kappa R-R^\top \bD^{-1}_\kappa (\bD_\gamma -\bD_\kappa^{-1})^{-1}\bD^{-1}_\kappa R\succeq 0$ and $\bD_\tau-\bD_\kappa^{-1}\succ 0$. Since the matrices are block diagonal, we obtain the desired result immediately.
\end{proof}

Recall that the definition of $C_d$ in \eqref{eq:Cd} involves a bound $B_d$ such that $\norm{y^*}< B_d$ for some dual optimal solution $y^*$. Next, we show that given a Slater point we can find a ball containing the optimal dual set for problem \eqref{eq:dual-implement}. To this end, we will prove this result for a more general case where the feasible set can be described by a general convex function $g(\bxi)\triangleq\sum_{i\in\cN}g_i(\xi_i)\in {\cK}$, where $g_i(\xi_i)$ is private convex function of $i\in\cN$, and in particular $g_i(\xi_i)=R_i\xi_i-r_i$ in \eqref{eq:dual-implement}.

Let $\Phi:\reals^n\rightarrow\reals^n\cup\{+\infty\}$ and $g:\reals^n\rightarrow\reals^m$ be arbitrary functions of $\bx$, and $\cK\subset\reals^m$ is a cone. We do not assume convexity for $\Phi$, $g$, and $\cK$, which are the components of the following generic problem
{\small
\begin{equation}
\label{eq:generic problem}
\Phi^*\triangleq\min_\bx \Phi(\bxi)\quad \hbox{s.t.}\quad g(\bxi)\in\cK\ :\ y\in\cK^\circ,
\end{equation}}%
where $y\in\reals^m$ denotes the vector of dual variables. Let $q$ denote the dual function, i.e.,
{\small
\begin{equation}
q(y)=\left\{
       \begin{array}{ll}
         \inf_{\bxi}\Phi(\bxi)+y^\top g(\bxi), & \hbox{if $y\in\cK^\circ$;} \\
         -\infty, & \hbox{o.w.}
       \end{array}
     \right.
\end{equation}}%
We assume that there exists $\hat{y}\in\cK^\circ$ such that $q(\hat{y})>-\infty$. Since $q$ is a closed concave function, this assumption implies that $-q$ is a proper closed convex function. Next we show that for any $\bar{y}\in\dom q=\{y\in\reals^m:\ q(y)>-\infty\}$, the superlevel set $Q_{\bar{y}}\triangleq\{y \in \dom q:\ q(y)\geq q(\bar{y})\}$ is contained in a Euclidean ball centered at the origin, of which radius can be computed efficiently. A special case of this dual boundedness result is well known when $\cK=\reals^m_{+}$~\cite{uzawa58}, and its proof is very simple and based on exploiting the componentwise separable structure of $\cK=\reals^m_{+}$ -- see Lemma~1.1 in~\cite{Nedic08_1J}; however, it is not trivial to extend this result to our setting where $\cK$ is an \emph{arbitrary} cone with $\intr(\cK)\neq\emptyset$.
\begin{lemma}\label{dual-bound}
Let $\bar{\bxi}$ be a Slater point for \eqref{eq:generic problem}, i.e., $\bar{\bxi}\in\relint(\dom \Phi)$ such that $g(\bar{\bxi})\in\intr(\cK)$. Then for all $\bar{y}\in \dom q$, the superlevel set $Q_{\bar{y}}$ is bounded. In particular,
{\small
\begin{equation}
\label{eq:dual-bound-radius}
\norm{y}\leq {\Phi(\bar{\bxi})-q(\bar{y}) \over r^*},\quad \forall y \in Q_{\bar{y}},
\end{equation}}%
where $0<r^*=\min_w\{w^\top g(\bar{\bxi}):\ \|w\|= 1,\ w\in \cK^*\}$.
Note that this is not a convex problem; instead, one can upper bound \eqref{eq:dual-bound-radius} using $0<\tilde{r}\leq r^*$, which can be efficiently computed by solving a convex problem
{\small
\begin{equation}\label{dual-bound-problem-tight}
\tilde{r}\triangleq\min_w\{w^\top g(\bar{\bxi}):\ \|w\|_1= 1,\ w\in \cK^*\}.
\end{equation}}%
\end{lemma}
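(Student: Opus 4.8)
The plan is to deduce the bound from two ingredients: a quantitative version of the Slater condition stating that $\bar{\bxi}$ sits at a uniform positive distance inside $\cK$ along every direction of $\cK^*$, together with the elementary weak‑duality estimate $q(y)\le\Phi(\bar{\bxi})+y^\top g(\bar{\bxi})$. No convexity of $\Phi$, $g$, or $\cK$ will be used.

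\textbf{Step 1: $r^*$ is well defined and positive.} First I would argue that the minimum defining $r^*$ is attained and strictly positive. Attainment holds because $\cK^*$ is closed, so $\{w:\ \norm{w}=1,\ w\in\cK^*\}$ is compact; it is nonempty as long as $\cK^*\neq\{0\}$ (and if $\cK^*=\{0\}$ then $\cK^\circ=\{0\}$, $\dom q=\{0\}$, and the assertion is vacuous). Positivity is where the Slater hypothesis enters: since $g(\bar{\bxi})\in\intr(\cK)$, there is $\delta>0$ with $\{v:\ \norm{v-g(\bar{\bxi})}\le\delta\}\subseteq\cK$, so for any $w\in\cK^*\setminus\{0\}$ the vector $v_w\triangleq g(\bar{\bxi})-\delta\,w/\norm{w}$ lies in $\cK$; hence $\fprod{w,v_w}\ge0$, which rearranges to $\fprod{w,g(\bar{\bxi})}\ge\delta\norm{w}$. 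Restricting to $\norm{w}=1$ gives $r^*\ge\delta>0$. I would highlight that this argument, valid for \emph{any} cone with nonempty interior, is the substitute for the elementary componentwise bound that is available only when $\cK=\reals^m_+$.

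\textbf{Step 2: the radius bound.} Fix $\bar{y}\in\dom q$ and $y\in Q_{\bar{y}}$; then $y\in\cK^\circ$, so $-y\in\cK^*$, and $q(y)\ge q(\bar{y})$. Evaluating the infimum defining $q(y)$ at the feasible point $\bar{\bxi}$,
\[
q(y)=\inf_{\bxi}\ \Phi(\bxi)+y^\top g(\bxi)\ \le\ \Phi(\bar{\bxi})+y^\top g(\bar{\bxi}),
\]
hence $-y^\top g(\bar{\bxi})\le\Phi(\bar{\bxi})-q(y)\le\Phi(\bar{\bxi})-q(\bar{y})$. If $y=0$ the claimed inequality holds trivially, its right‑hand side being nonnegative because $\bar{y}\in\cK^\circ$ and $g(\bar{\bxi})\in\cK$ force $\bar{y}^\top g(\bar{\bxi})\le0$ and thus $q(\bar{y})\le\Phi(\bar{\bxi})$. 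If $y\neq0$, apply Step 1 with $w=-y/\norm{y}\in\cK^*$ to obtain $-y^\top g(\bar{\bxi})=\norm{y}\,\fprod{w,g(\bar{\bxi})}\ge r^*\norm{y}$. Combining the last two displays gives $r^*\norm{y}\le\Phi(\bar{\bxi})-q(\bar{y})$, which is \eqref{eq:dual-bound-radius}; in particular $Q_{\bar{y}}$ lies in a ball of radius $(\Phi(\bar{\bxi})-q(\bar{y}))/r^*$, so it is bounded.

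\textbf{Step 3: the computable surrogate $\tilde{r}$, and the main obstacle.} It remains to verify $0<\tilde{r}\le r^*$, so that replacing $r^*$ by $\tilde{r}$ in \eqref{eq:dual-bound-radius} yields a valid (weaker) estimate. Attainment and positivity of $\tilde{r}$ follow verbatim from Step 1 with $\norm{\cdot}_1$ in place of $\norm{\cdot}$. For $\tilde{r}\le r^*$, let $w^\star$ attain $r^*$, so $\norm{w^\star}=1$, $w^\star\in\cK^*$, $\fprod{w^\star,g(\bar{\bxi})}=r^*$; then $\hat{w}\triangleq w^\star/\norm{w^\star}_1$ satisfies $\hat{w}\in\cK^*$, $\norm{\hat{w}}_1=1$, and $\fprod{\hat{w},g(\bar{\bxi})}=r^*/\norm{w^\star}_1\le r^*$ since $\norm{w^\star}_1\ge\norm{w^\star}=1$ and $r^*>0$; hence $\tilde{r}\le\fprod{\hat{w},g(\bar{\bxi})}\le r^*$. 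The remaining claim — that \eqref{dual-bound-problem-tight} is solvable by convex optimization — I would treat as a remark rather than part of the bound, noting that minimizing the linear functional $w\mapsto\fprod{w,g(\bar{\bxi})}$ over $\cK^*$ under the normalization in \eqref{dual-bound-problem-tight} is a standard conic program (a linear program when $\cK$ is polyhedral). The genuinely delicate point in the whole proof is Step 1: converting the qualitative condition $g(\bar{\bxi})\in\intr\cK$ into the quantitative, dimension‑independent margin $r^*>0$ for an arbitrary cone; once this is in hand, the rest is bookkeeping with the definition of $q$ and the relation $\cK^\circ=-\cK^*$.
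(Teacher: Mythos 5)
Your proof is correct, and it reaches the bound by a more direct route than the paper. Both arguments share the same two ingredients: evaluating the infimum defining $q(y)$ at the Slater point to get $-y^\top g(\bar{\bxi})\leq \Phi(\bar{\bxi})-q(\bar{y})$, and using an inscribed ball $\cB(g(\bar{\bxi}),\delta)\subseteq\cK$ to produce a positive margin. The difference is in how $r^*$ enters. The paper first proves $r\norm{y}\leq -y^\top g(\bar{\bxi})$ for an arbitrary inscribed radius $r$, and then spends the bulk of the proof characterizing the \emph{largest} such radius: it sets up $\gamma(r)=\sup_{\norm{u}\leq 1}d_{\cK}(g(\bar{\bxi})+ru)$, interchanges an inf and a sup, and shows the optimal radius equals $\min\{w^\top g(\bar{\bxi}):\norm{w}=1,\ w\in\cK^*\}$, i.e., problem $(P_1)$. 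You bypass that minimax computation entirely: you take $r^*$ as defined (a minimum over the compact set $\{w\in\cK^*:\norm{w}=1\}$), prove $r^*\geq\delta>0$ by testing the dual-cone inequality against $v_w=g(\bar{\bxi})-\delta w/\norm{w}\in\cK$, and then obtain $-y^\top g(\bar{\bxi})\geq r^*\norm{y}$ in one line because $-y/\norm{y}\in\cK^*$ is itself feasible for the minimization defining $r^*$ (your phrasing ``apply Step 1'' is slightly loose here — the inequality with $r^*$ comes from feasibility in the min, while Step 1 supplies only $r^*>0$ — but the logic is sound). What the paper's longer detour buys is the geometric interpretation of $r^*$ as the exact largest inscribed-ball radius, which is not needed for the stated bound; your version is shorter and equally general (no convexity of $\Phi$, $g$, or $\cK$ is used), and you additionally handle the degenerate case $\cK^*=\{0\}$ and attainment/positivity of $\tilde{r}$ explicitly. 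The comparison $\tilde{r}\leq r^*$ via rescaling the optimizer of $(P_1)$ by its $\ell_1$-norm is identical to the paper's.
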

\begin{proof}
For any $y\in Q_{\bar{y}}\subset\cK^\circ$ we have that
{\small
\begin{equation}
q(\bar{y})\leq q(y)=\inf_{\bxi} \{\Phi(\bxi)+y^\top g(\bxi)\} \leq \Phi(\bar{\bxi})+y^\top g(\bar{\bxi}) \label{eq3},
\end{equation}}%
which implies that $-y^\top g(\bar{\bxi})\leq \Phi(\bar{\bxi})-q(\bar{y})$.
Since $g(\bar{\bxi})\in\intr(\cK)$ and $y\in\cK^\circ$, we clearly have $-y^\top g(\bar{\bxi})>0$ whenever $y\neq\zero$. Indeed, since $g(\bar{\bxi})\in\intr(\cK)$, there exist a radius $r>0$ such that $g(\bar{\bxi})+ru\in \cK$ for all $\|u\|\leq 1$. Hence, for $y\neq\zero$, by choosing $u=y/\|y\|$ and using the fact that $y\in\cK^\circ$, we get that $0\geq(g(\bar{\bxi})+r y/\|y\|)^\top y$. Therefore, \eqref{eq3} implies that for all $y\in Q_{\bar{y}}$ we have
{\small
\begin{equation}
r\|y\| \leq -y^\top g(\bar{\bxi})\leq \Phi(\bar{\bxi})-q(\bar{y})\quad \Rightarrow\quad \|y\| \leq {\Phi(\bar{\bxi})-q(\bar{y}) \over r}.
\end{equation}}%
Now, we will characterize the largest radius $r^*>0$ such that $\cB(g(\bar{\bxi}),r^*)\triangleq\{g(\bar{\bxi})+r^*u:\ \norm{u}\leq 1\}\subset\cK$. Note that $r^*>0$ can be written explicitly as the optimal value of the following optimization problem:
{\small
\begin{equation}
r^*=\max\{r:\ d_\cK\big(g(\bar{\bx})+ru\big)\leq 0,\quad \forall u:~ \|u\| \leq 1\} \label{radius-problem}.
\end{equation}}%
Let $\gamma(r)\triangleq\sup\{d_\cK\big(g(\bar{\bxi})+ru\big):\ \norm{u}\leq 1\}$; hence, $r^*=\max\{r:\ \gamma(r)\leq 0\}$. Note that for any fixed $u\in\reals^m$, $d_\cK\big(g(\bar{\bxi})+ru\big)$ as a function of $r$ is a composition of convex function $d_\cK(.)$ with affine function in $r$; hence, it is convex in $r\in\reals$ for all $u\in\reals^m$. Moreover, since supremum of convex functions is also convex, $\gamma(r)$ is convex in $r$. From the definition of $d_{\cK}(\cdot)$, we have
{\small
\begin{equation}
\gamma(r)=\sup_{\|u\|\leq 1}\inf_{\bxi\in \cK}\|\bxi-\big(g(\bar{\bxi})+ru\big)\|
=\sup_{\|u\|\leq 1}\inf_{\bxi\in\cK} \sup_{\|w\|\leq 1}w^\top \Big(\bxi-\big(g(\bar{\bxi})+ru\big)\Big) \label{eq5}.
\end{equation}}%
Since $\{w\in\reals^m:\ \|w\|\leq 1\}$ is a compact set, and the function in \eqref{eq5} is a bilinear function of $w$ and $\bxi$ for each fixed $u$, we can interchange $\inf$ and $\sup$, and obtain
{\small
\begin{align}
\gamma(r)=&\sup_{\|u\|\leq 1}\sup_{\|w\|\leq 1} \inf_{\bxi\in \cK} w^\top \Big(\bxi-\big(g(\bar{\bxi})+ru\big)\Big) \nonumber \\
= &\sup\limits_{\substack{\|u\|\leq 1 \\ \|w\| \leq 1\\ w\in \cK^*}} -w^\top \big(g(\bar{\bx})+ru\big) = \sup\limits_{\substack{\|w\| \leq 1\\ w\in \cK^*}} -w^\top g(\bar{\bxi})+r\|w\|. \label{gamma-radius-problem}
\end{align}}%
Let $w^*(r)$ be an $\argmax$ of \eqref{gamma-radius-problem}. It is easy to see that $\norm{w^*(r)}=1$, since the supremum of a convex function over a convex set is attained on the boundary of the set. Therefore,
{\small
\begin{equation}
\gamma(r)=\sup\limits_{\substack{\|w\|= 1\\ w\in \cK^*}} -w^\top g(\bar{\bxi})+r. \label{eq:gamma-explicit}
\end{equation}}%
Since $r^*=\max\{r:\ \gamma(r)\leq 0\}$, from \eqref{eq:gamma-explicit} it follows that
{\small
\begin{equation*}
(P_1)\qquad r^*=\max\Big\{r:\ r\leq -\sup\{ -w^\top g(\bar{\bxi}):\ \|w\|= 1,\ w\in \cK^*\}\Big\}=
\min\limits_{\substack{\|w\|= 1\\ w\in \cK^*}} w^\top g(\bar{\bxi}).
\end{equation*}}%
Note that $(P_1)$ is not a convex problem due to boundary constraint, $\|w\|= 1$. Next, we define a convex problem $(P_2)$ to lowerbound $r^*$ so that we can upper bound the right hand side of \eqref{eq:dual-bound-radius}.
{\small
\begin{equation*}
(P_2)\qquad \min\limits_{\substack{\|w\|_1= 1\\ w\in \cK^*}} w^\top g(\bar{\bxi}) \leq r^*= \min\limits_{\substack{\|w\|= 1\\ w\in \cK^*}} w^\top g(\bar{\bxi}). %
\end{equation*}}%
Let $w^*$ be an optimal solution to $(P_1)$ and define $\bar{w}=w^*/\|w^*\|_1$. Clearly, $\|\bar{w}\|_1=1$ and $\bar{w}\in \cK^*$. Moreover, since $\|w^*\|_1\geq \|w^*\|=1$ we have that
{\small
\begin{equation*}
\tilde{r}=\min\limits_{\substack{\|w\|_1= 1\\ w\in \cK^*}} w^\top g(\bar{\bxi}) \leq\bar{w}^\top g(\bar{\bxi})=\frac{1}{\|w^*\|_1} {w^{*}}^\top g(\bar{\bx}) \leq {w^{*}}^\top g(\bar{\bxi})=\min\limits_{\substack{\|w\|= 1\\ w\in \cK^*}} w^\top g(\bar{\bxi}).
\end{equation*}}%
\end{proof}
\begin{remark}
Consider the problem in \eqref{eq:generic problem}. If we further assume that $\Phi$ is convex, $-g$ is $\cK$-convex, and $\cK$ is a proper cone, and $\Phi^*>-\infty$, i.e., \eqref{eq:generic problem} is a convex problem with a finite optimal value, then it is known that the Slater condition in Lemma~\ref{dual-bound} is sufficient for the existence of a dual optimal solution, $y^*\in\cK^\circ$, and for zero duality gap. Hence, the dual optimal solution set $Q^*\triangleq\{y\in\cK^\circ:\ q(y)\geq \Phi^*\}$ can be bounded using Lemma~\ref{dual-bound}. In particular, $\norm{y}\leq\big(\Phi(\bar{\bxi})-\Phi^*\big)/r^*$ for all $y\in Q^*$.
\end{remark}
\begin{remark}
Let $g_j:\reals^n\rightarrow\reals$ be the components of $g:\reals^n\rightarrow\reals^m$ for $j=1,\ldots,m$, i.e., $g(\bxi)=[g_j(\bxi)]_{j=1}^m$. When $\cK=\reals^m_+$, Lemma~1.1 in~\cite{Nedic08_1J} implies that for any $\bar{y}\in\dom q$ and $\bar{\bxi}$ such that $g(\bar{\bxi})\in\intr(\cK)$, i.e., $g_j(\bar{\bxi})>0$ for all $j=1,\ldots,m$, the superlevel set $Q_{\bar{y}}$ can be bounded as follows $\norm{y}\leq \big(\Phi(\bar{\bxi})-q(\bar{y})\big)/\bar{r}$ for all $y \in Q_{\bar{y}}$, where $\bar{r}\triangleq\min\{g_j(\bar{\bxi}):\ j=1,\ldots,m\}$. Note our result in Lemma~\ref{dual-bound} gives the same bound since $r^*=\min_w\{w^\top g(\bar{\bxi}):\ \|w\|= 1,\ w\in\reals^m_+\}=\bar{r}$.
\end{remark}
\section{Numerical Section}
\label{sec:numerics}
We test DPDA-S and DPDA-D on a primal linear SVM problem where the data is distributed among computing nodes.
Consider a random connected graph $G=(\mathcal{N},\mathcal{E})$ and $N\triangleq |\mathcal{N}|$. Let $\cS\triangleq\{1,2,..,s\}$ and $\cD\triangleq\{(x_\ell,y_\ell)\in\reals^n\times\{-1,+1\}:\ \ell\in\cS\}$ be a set of feature vector and label pairs. Suppose $\cS$ is partitioned into $\cS_\mathrm{test}$ and $\cS_\mathrm{train}$, i.e., the index sets for the test and training data; let $\{\cS_i\}_{i\in\cN}$ be a partition of $\cS_\mathrm{train}$ among the nodes $\cN$. Let $\bw=[w_i]_{i\in\cN}$, $\bb=[b_i]_{i\in\cN}$, and $\xi\in\reals^{|\cS_\mathrm{train}|}$ such that $w_i\in\reals^n$ and $b_i\in\reals$ for $i\in\cN$.

Consider the following distributed SVM problem:
{\small
\begin{align}
\min_{\bw,\bb,\xi}\  &{1\over 2}\sum_{i\in \mathcal{N}}\|w_i\|^2+NC\sum_{i\in\mathcal{N}}\sum_{\ell\in\cS_i}\xi_\ell \nonumber \\
\hbox{s.t.} \ \ &y_\ell(w_i^Tx_\ell+b_i) \geq 1-\xi_\ell,\quad \xi_\ell\geq 0,\quad \ell\in\cS_i,\ i\in\mathcal{N},\nonumber \\
&w_i=w_j,\quad b_i=b_j \quad (i,j)\in\cE. \nonumber
\end{align}
}%
Similar to \cite{forero2010consensus}, $\{x_\ell\}_{\ell\in\cS}$ is generated from two-dimensional multivariate Gaussian distribution with covariance matrix $\Sigma=[1,0;0,2]$ and with mean vector either $m_1=[-1,-1]^T$ or $m_2=[1,1]^T$ with equal probability. The experiment was performed for $s=900$ such that $|\cS_\mathrm{train}|=300$ and $|\cS_\mathrm{train}|=600$. We examine both DPDA-S and DPDA-D in four cases depending on $(i)$ parameter $C\in\{2,10\}$; and $(ii)$ algebraic connectivity of the network graph to be 0.05 and 1. For each of these situations the five replications was performed with the same data set, statistics from each replication and their average over replications are plotted. In particular, for each case, the corresponding suboptimality, feasibility and consensus violation is plotted against iteration counter, where consensus violation is defined as $\max_{(i,j)\in\mathcal{E}}\|[w_i^\top b_i]^\top-[w_j^\top b_j]^\top\|$. Fig.~\ref{Static-Alg0.05}, corresponds to increasing $C$ from $C=2$ to $C=10$ for static network topology with $N=10$, and the algebraic connectivity is 0.05. 
The other figures corresponding to the rest of the test cases are given in Fig~\ref{Static-Alg0.05}, Fig.~\ref{Dynamic-Alg0.05}, Fig~\ref{Static-Alg1}, and Fig.~\ref{Dynamic-Alg1}. Furthermore, visual comparison between DPDA-S, local SVM and centralized SVM for the same data set with $C=10$ is given in Fig.~\ref{SVM}.

\section{Conclusion}
We propose primal-dual algorithms for distributed optimization subject to agent specific conic constraints and/or global conic constraints with separable local components. By assuming composite convex structure on the primal functions, we show that our proposed algorithms converge with $\mathcal{O}(1/{k})$ rate where $k$ is the number of consensus iterations. To the best of our knowledge, this is the best rate result for our setting. We would like to emphasize that using these techniques, we can solve more general distributed optimization problems when there both local and global decisions subject to both local constraints and global resource sharing constraints, i.e.,
{\small
\begin{equation}\label{eq:extension-problem}
\min_{\substack{x,\bxi}}\left\{\sum_{i\in \cN}{\Phi_i(x,\xi_i)}:\ \sum_{i\in\cN}Q_ix+ R_i\xi_i-r_i \in \cK,\  \sum_{i\in\cN}A_ix+B_i\xi_i-b_i \in \cK_i,\quad i\in\cN, \right\}
\end{equation}}%
where $x\in\reals^n$ denotes the global variable and $\xi_i$ denotes the local variable, $\cK\subseteq \reals^m$ and $\cK_i\in\reals^{m_i}$ are proper cones, $Q_i\in\reals^{m\times n}$, $R_i\in\reals^{m\times n_i}$, $r_i\in \reals^{m}$, $A_i\in\reals^{m_i\times n}$, $B_i\in\reals^{m_i\times n}$, $b_i\in\reals^{m_i}$ are the problem data such that each node $i\in\cN$ only have access to $Q_i$, $R_i$, $r_i$, $A_i$, $B_i$, $b_i$, $\cK_i$ and $\cK$ along with its objective $\Phi_i$ as defined in \eqref{eq:F_i}.

\begin{figure}[h]
\begin{subfigure}[b]{0.33\textwidth}
\includegraphics[scale=0.23]{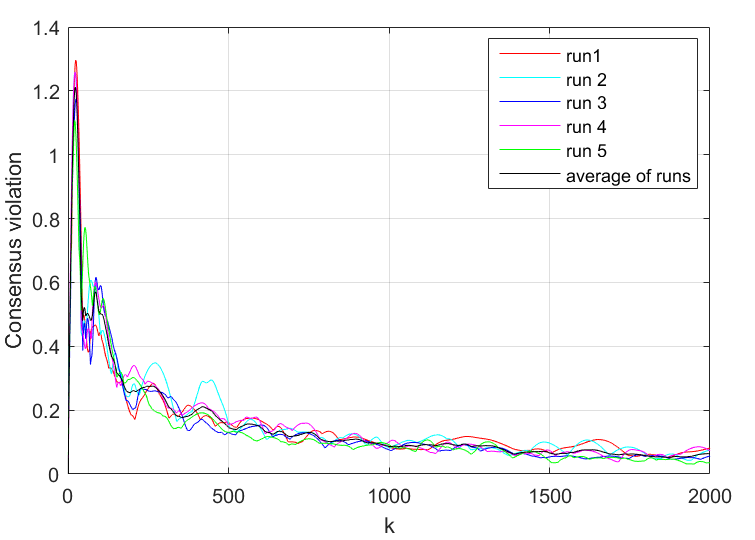}
\end{subfigure}
\begin{subfigure}[b]{0.33\textwidth}
\includegraphics[scale=0.235]{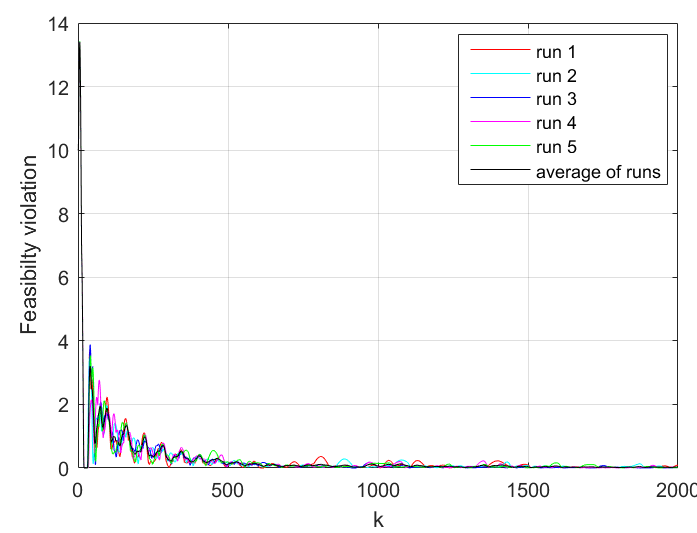}
\end{subfigure}
\begin{subfigure}[b]{0.33\textwidth}
\includegraphics[scale=0.235]{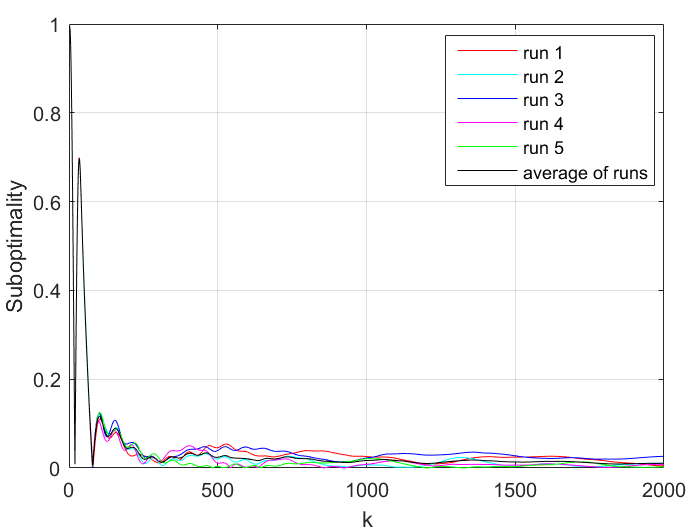}
\end{subfigure}
\\
\begin{subfigure}[b]{0.33\textwidth}
\includegraphics[scale=0.23]{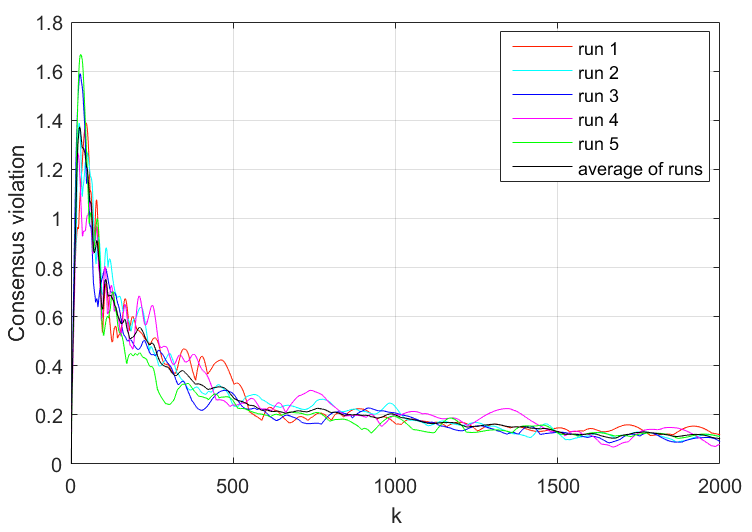}
\end{subfigure}
\begin{subfigure}[b]{0.33\textwidth}
\includegraphics[scale=0.23]{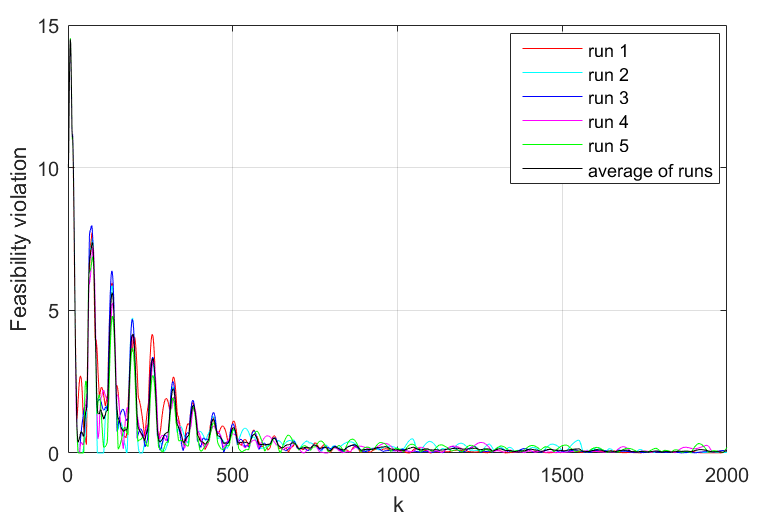}
\end{subfigure}
\begin{subfigure}[b]{0.33\textwidth}
\includegraphics[scale=0.23]{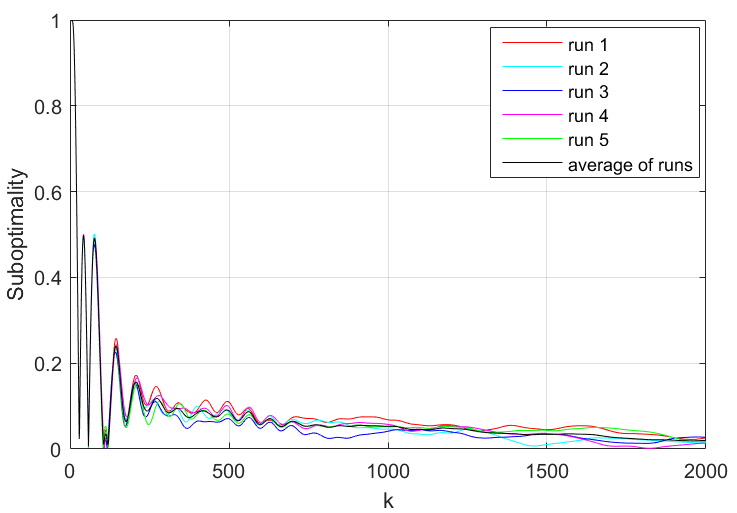}
\end{subfigure}
\vspace*{-3mm}
\caption{Static network topology with algebraic connectivity 0.05 where the first row corresponds to $C=2$ and the second row corresponds to $C=10$.}
\label{Static-Alg0.05}
\end{figure}

\begin{figure}[h]
\begin{subfigure}[b]{0.33\textwidth}
\includegraphics[scale=0.24]{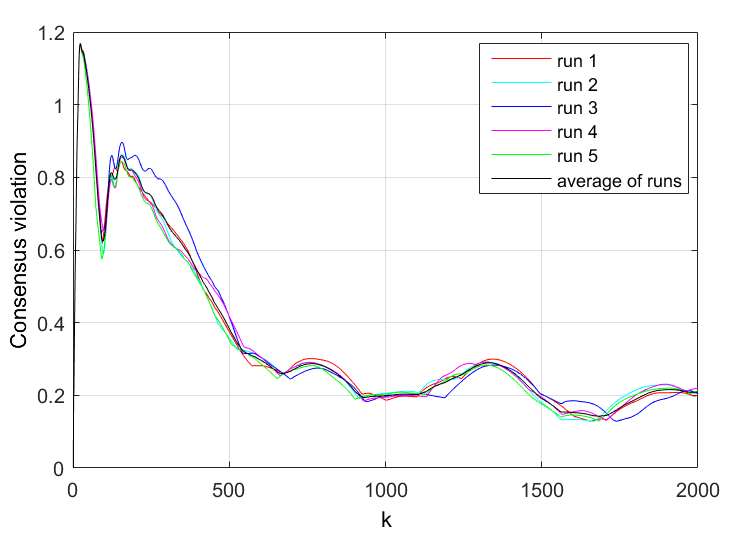}
\end{subfigure}
\begin{subfigure}[b]{0.33\textwidth}
\includegraphics[scale=0.24]{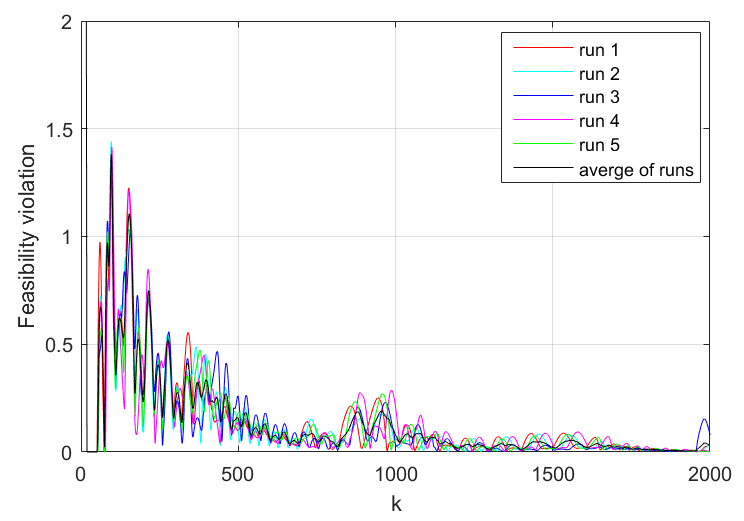}
\end{subfigure}
\begin{subfigure}[b]{0.33\textwidth}
\includegraphics[scale=0.245]{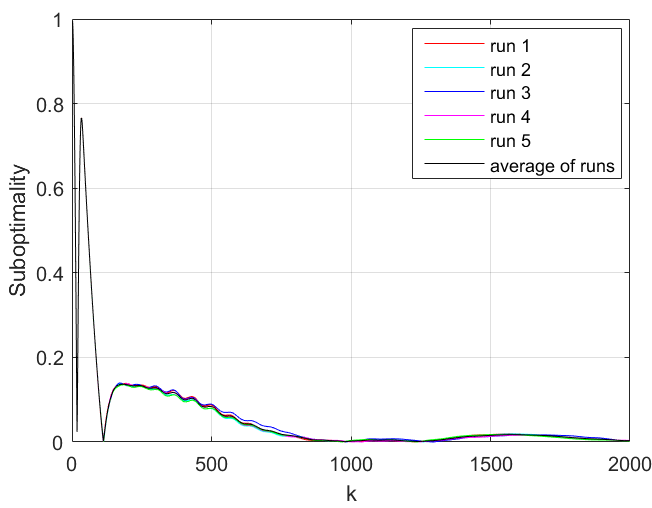}
\end{subfigure}
\\
\begin{subfigure}[b]{0.33\textwidth}
\includegraphics[scale=0.24]{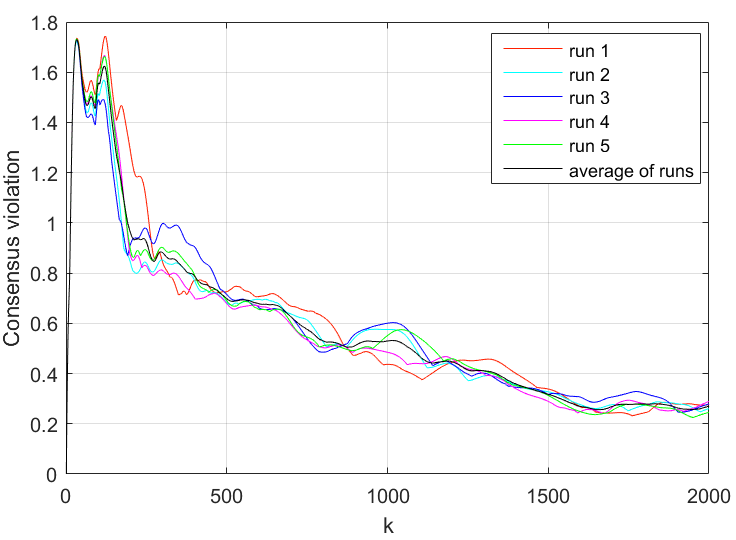}
\end{subfigure}
\begin{subfigure}[b]{0.33\textwidth}
\includegraphics[scale=0.24]{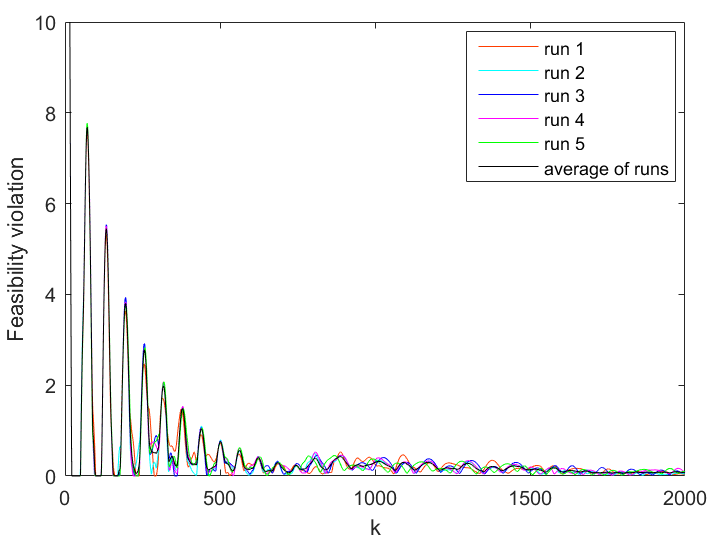}
\end{subfigure}
\begin{subfigure}[b]{0.33\textwidth}
\includegraphics[scale=0.24]{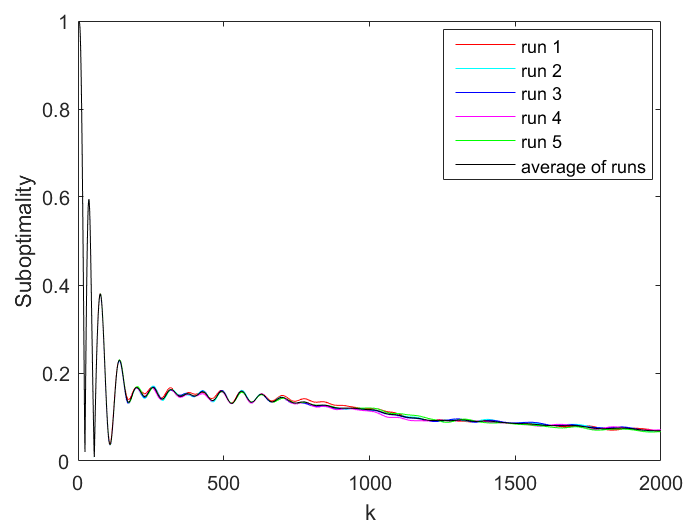}
\end{subfigure}
\caption{Dynamic network topology with algebraic connectivity 0.05 where the first row corresponds to $C=2$ and the second row corresponds to $C=10$.}
\label{Dynamic-Alg0.05}
\end{figure}

\begin{figure}
\begin{subfigure}[b]{0.33\textwidth}
\includegraphics[scale=0.23]{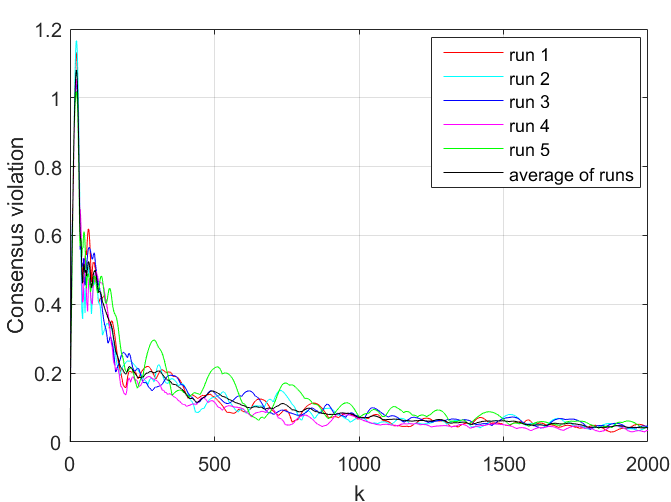}
\end{subfigure}
\begin{subfigure}[b]{0.33\textwidth}
\includegraphics[scale=0.35]{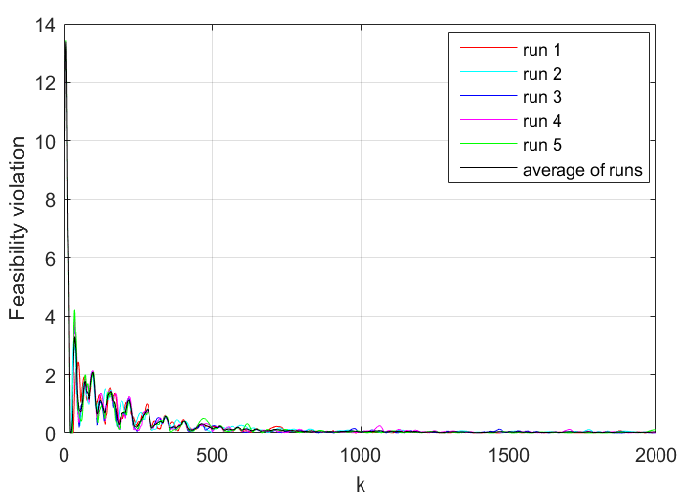}
\end{subfigure}
\begin{subfigure}[b]{0.33\textwidth}
\includegraphics[scale=0.235]{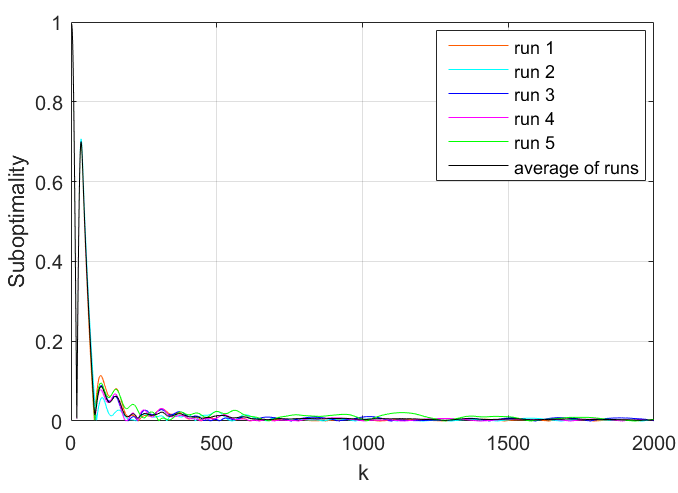}
\end{subfigure}
\\
\begin{subfigure}[b]{0.33\textwidth}
\includegraphics[scale=0.26]{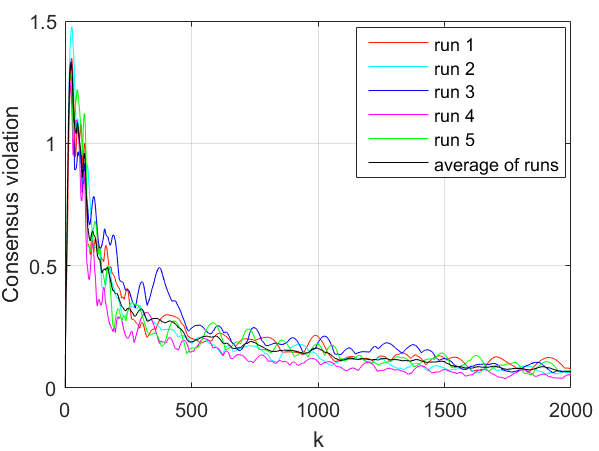}
\end{subfigure}
\begin{subfigure}[b]{0.33\textwidth}
\includegraphics[scale=0.27]{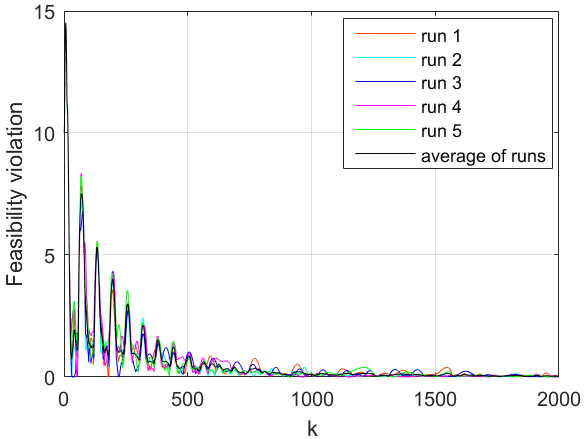}
\end{subfigure}
\begin{subfigure}[b]{0.33\textwidth}
\includegraphics[scale=0.265]{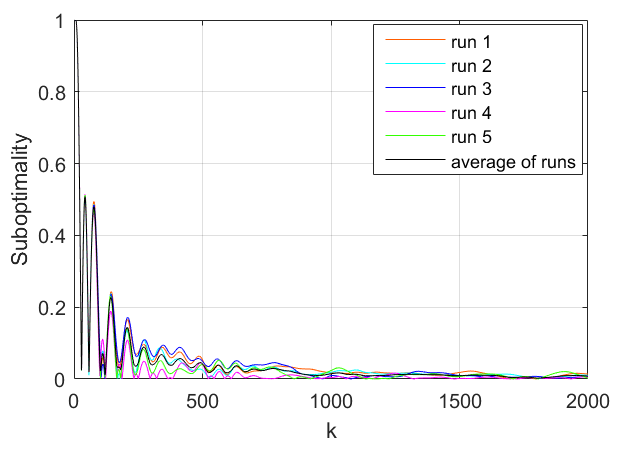}
\end{subfigure}
\caption{Static network topology with algebraic connectivity 1 where the first row corresponds to $C=2$ and the second row corresponds to $C=10$.}
\label{Static-Alg1}
\end{figure}

\begin{figure}
\begin{subfigure}[b]{0.33\textwidth}
\includegraphics[scale=0.25]{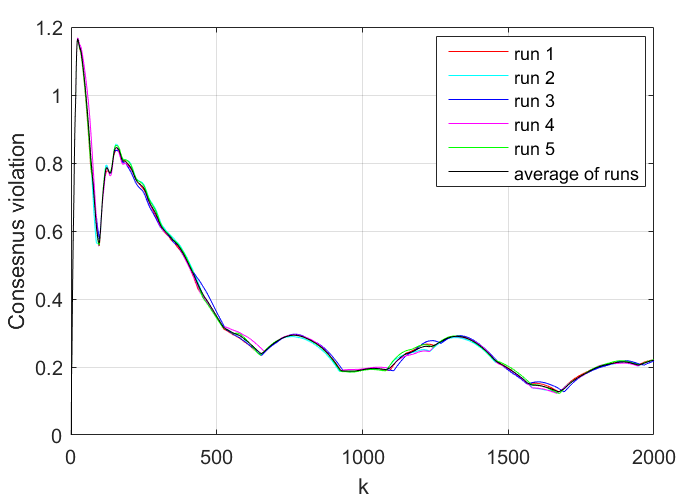}
\end{subfigure}
\begin{subfigure}[b]{0.33\textwidth}
\includegraphics[scale=0.245]{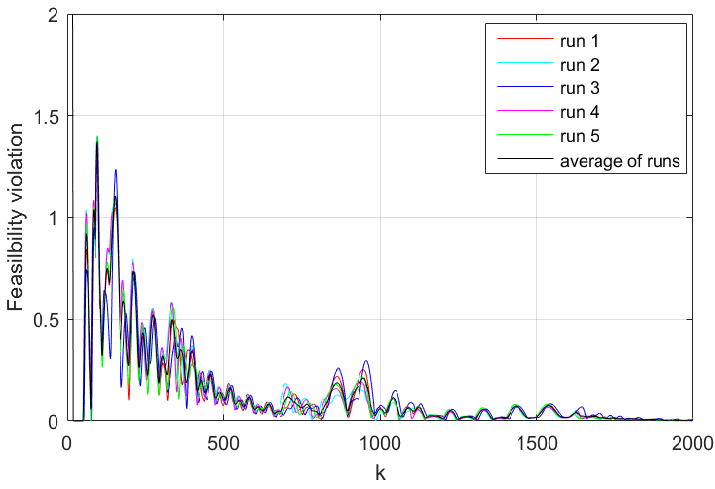}
\end{subfigure}
\begin{subfigure}[b]{0.33\textwidth}
\includegraphics[scale=0.25]{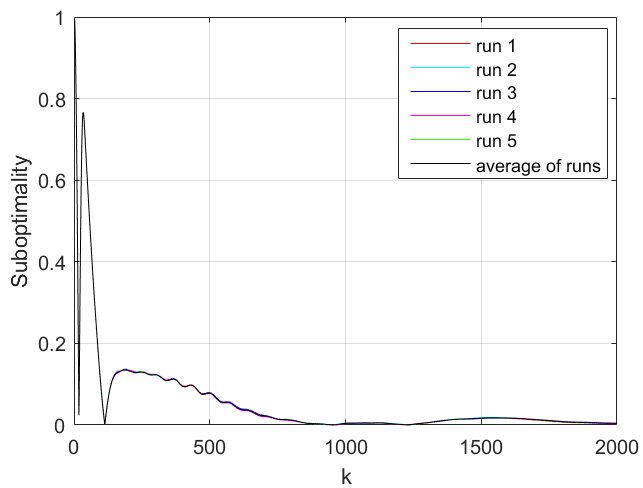}
\end{subfigure}
\\
\begin{subfigure}[b]{0.33\textwidth}
\includegraphics[scale=0.24]{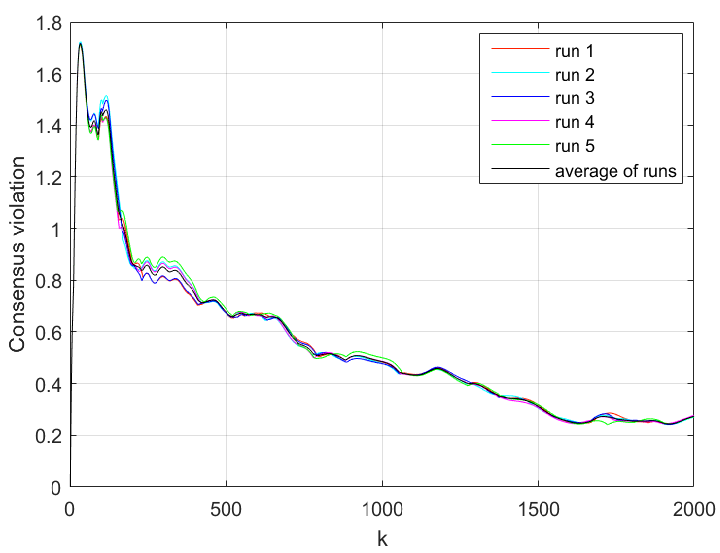}
\end{subfigure}
\begin{subfigure}[b]{0.33\textwidth}
\includegraphics[scale=0.24]{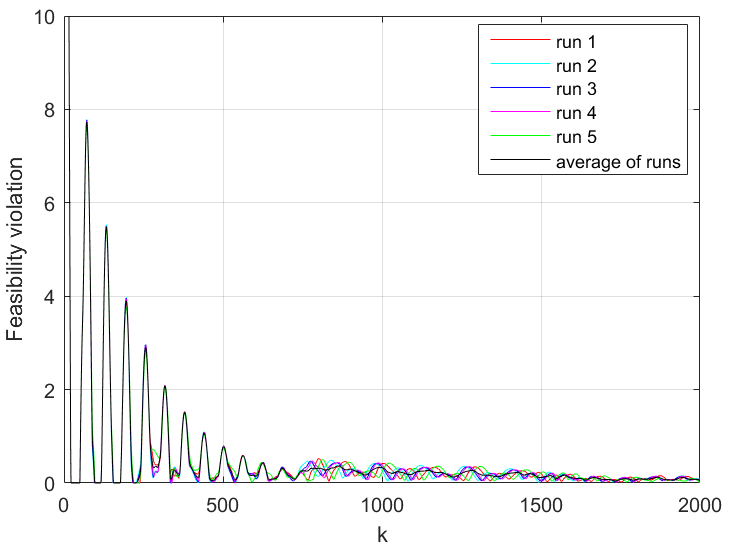}
\end{subfigure}
\begin{subfigure}[b]{0.33\textwidth}
\includegraphics[scale=0.24]{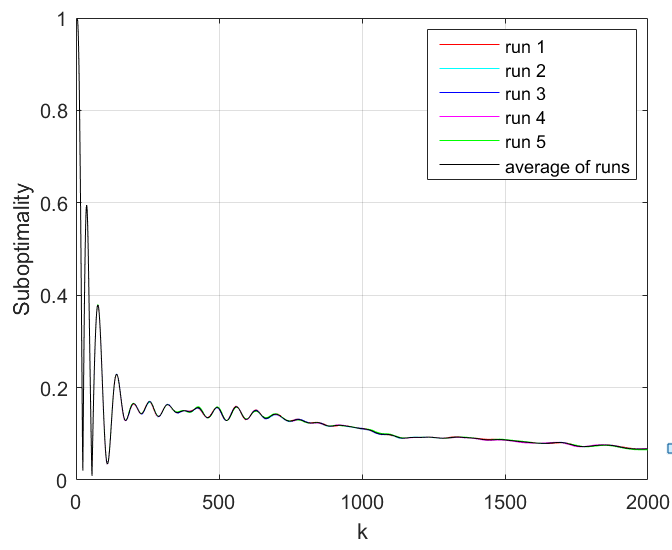}
\end{subfigure}
\caption{Dynamic network topology with algebraic connectivity 1 where the first row corresponds to $C=2$ and the second row corresponds to $C=10$.}
\label{Dynamic-Alg1}
\end{figure}
\begin{figure}[h]
\centering
\includegraphics[scale=0.3]{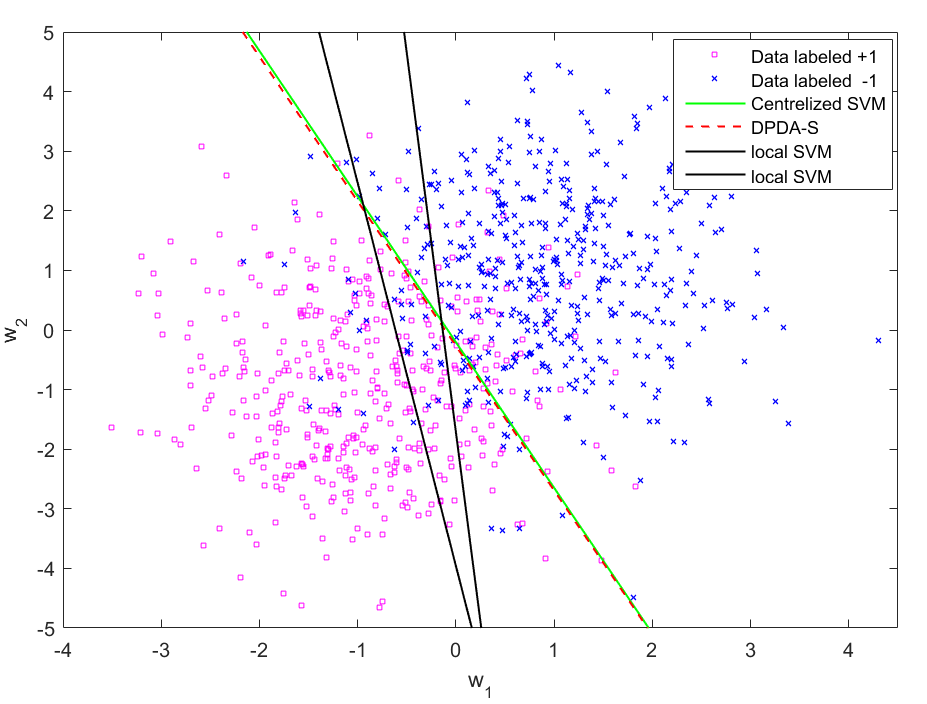}
\caption{Comparison among DPDA-S, local SVM, and central SVM for $C=10$ and algebraic connectivity $0.05$.}
\label{SVM}
\end{figure}
\clearpage
{\small
\singlespacing
\bibliographystyle{unsrt}
\bibliography{papers}

\begin{thebibliography}{10}

\bibitem{ling2010decentralized}
Qing Ling and Zhi Tian.
\newblock Decentralized sparse signal recovery for compressive sleeping
  wireless sensor networks.
\newblock {\em Signal Processing, IEEE Transactions on}, 58(7):3816--3827,
  2010.

\bibitem{schizas2008consensus}
Ioannis~D Schizas, Alejandro Ribeiro, and Georgios~B Giannakis.
\newblock Consensus in ad hoc {WSN}s with noisy links - {Part I}: Distributed
  estimation of deterministic signals.
\newblock {\em Signal Processing, IEEE Transactions on}, 56(1):350--364, 2008.

\bibitem{forero2010consensus}
Pedro~A Forero, Alfonso Cano, and Georgios~B Giannakis.
\newblock Consensus-based distributed support vector machines.
\newblock {\em The Journal of Machine Learning Research}, 11:1663--1707, 2010.

\bibitem{mcdonald2010distributed}
Ryan McDonald, Keith Hall, and Gideon Mann.
\newblock Distributed training strategies for the structured perceptron.
\newblock In {\em Human Language Technologies: The 2010 Annual Conference of
  the North American Chapter of the Association for Computational Linguistics},
  pages 456--464. Association for Computational Linguistics, 2010.

\bibitem{yan2013distributed}
F.~Yan, S.~Sundaram, S.~Vishwanathan, and Y.~Qi.
\newblock Distributed autonomous online learning: Regrets and intrinsic
  privacy-preserving properties.
\newblock {\em Knowledge and Data Engineering, IEEE Transactions on},
  25(11):2483--2493, 2013.

\bibitem{mateos2010distributed}
Gonzalo Mateos, Juan~Andr{\'e}s Bazerque, and Georgios~B Giannakis.
\newblock Distributed sparse linear regression.
\newblock {\em Signal Processing, IEEE Transactions on}, 58(10):5262--5276,
  2010.

\bibitem{bach2004multiple}
Francis~R Bach, Gert~RG Lanckriet, and Michael~I Jordan.
\newblock Multiple kernel learning, conic duality, and the smo algorithm.
\newblock In {\em Proceedings of the twenty-first international conference on
  Machine learning}, page~6. ACM, 2004.

\bibitem{nedic2009subgradient}
Angelia Nedi{\'c} and Asuman Ozdaglar.
\newblock Subgradient methods for saddle-point problems.
\newblock {\em Journal of optimization theory and applications},
  142(1):205--228, 2009.

\bibitem{chambolle2011first}
Antonin Chambolle and Thomas Pock.
\newblock A first-order primal-dual algorithm for convex problems with
  applications to imaging.
\newblock {\em Journal of Mathematical Imaging and Vision}, 40(1):120--145,
  2011.

\bibitem{he2012convergence}
Bingsheng He and Xiaoming Yuan.
\newblock Convergence analysis of primal-dual algorithms for a saddle-point
  problem: from contraction perspective.
\newblock {\em SIAM Journal on Imaging Sciences}, 5(1):119--149, 2012.

\bibitem{chambolle2015ergodic}
Antonin Chambolle and Thomas Pock.
\newblock On the ergodic convergence rates of a first-order primal--dual
  algorithm.
\newblock {\em Mathematical Programming}, pages 1--35, 2015.

\bibitem{chen2014optimal}
Yunmei Chen, Guanghui Lan, and Yuyuan Ouyang.
\newblock Optimal primal-dual methods for a class of saddle point problems.
\newblock {\em SIAM Journal on Optimization}, 24(4):1779--1814, 2014.

\bibitem{chen2013primal}
Peijun Chen, Jianguo Huang, and Xiaoqun Zhang.
\newblock A primal--dual fixed point algorithm for convex separable
  minimization with applications to image restoration.
\newblock {\em Inverse Problems}, 29(2):025011, 2013.

\bibitem{condat2013primal}
Laurent Condat.
\newblock A primal--dual splitting method for convex optimization involving
  lipschitzian, proximable and linear composite terms.
\newblock {\em Journal of Optimization Theory and Applications},
  158(2):460--479, 2013.

\bibitem{Nedic08_1J}
A.~Nedic and A.~Ozdaglar.
\newblock {\em Convex Optimization in Signal Processing and Communications},
  chapter Cooperative Distributed Multi-agent Optimization, pages 340--385.
\newblock Cambridge University Press, 2010.

\bibitem{nedic2014distributed}
A.~Nedi{\'c}.
\newblock Distributed optimization.
\newblock In {\em Encyclopedia of Systems and Control}, pages 1--12. Springer,
  2014.

\bibitem{lee2013optimization}
S.~Lee.
\newblock {\em Optimization over networks: efficient algorithms and analysis}.
\newblock PhD thesis, University of Illinois at Urbana-Champaign, 2013.

\bibitem{srivastava2012distributed}
K.~Srivastava.
\newblock {\em Distributed optimization with applications to sensor networks
  and machine learning}.
\newblock PhD thesis, University of Illinois at Urbana-Champaign, 2012.

\bibitem{chang2014distributed}
Tsung-Hui Chang, Angelia Nedic, and Anna Scaglione.
\newblock Distributed constrained optimization by consensus-based primal-dual
  perturbation method.
\newblock {\em Automatic Control, IEEE Transactions on}, 59(6):1524--1538,
  2014.

\bibitem{lee2013distributed}
Soomin Lee and Angelia Nedic.
\newblock Distributed random projection algorithm for convex optimization.
\newblock {\em Selected Topics in Signal Processing, IEEE Journal of},
  7(2):221--229, 2013.

\bibitem{mateos2015distributed}
David Mateos-N{\'u}{\~n}ez and Jorge Cort{\'e}s.
\newblock Distributed subgradient methods for saddle-point problems.
\newblock In {\em IEEE Conf. on Decision and Control. Osaka, Japan. Submitted},
  2015.

\bibitem{yuan2015regularized}
Deming Yuan, Daniel~WC Ho, and Shengyuan Xu.
\newblock Regularized primal-dual subgradient method for distributed
  constrained optimization.
\newblock {\em IEEE Transactions on Cybernetics}, PP(99):1--1, 2015.

\bibitem{yuan2011distributed}
Deming Yuan, Shengyuan Xu, and Huanyu Zhao.
\newblock Distributed primal--dual subgradient method for multiagent
  optimization via consensus algorithms.
\newblock {\em Systems, Man, and Cybernetics, Part B: Cybernetics, IEEE
  Transactions on}, 41(6):1715--1724, 2011.

\bibitem{zhu2012distributed}
Minghui Zhu and Sonia Mart{\'\i}nez.
\newblock On distributed convex optimization under inequality and equality
  constraints.
\newblock {\em Automatic Control, IEEE Transactions on}, 57(1):151--164, 2012.

\bibitem{nedic2010constrained}
Angelia Nedi{\'c}, Asuman Ozdaglar, and Pablo~A Parrilo.
\newblock Constrained consensus and optimization in multi-agent networks.
\newblock {\em Automatic Control, IEEE Transactions on}, 55(4):922--938, 2010.

\bibitem{srivastava2010distributed}
Kunal Srivastava, Angelia Nedi{\'c}, and Du{\v{s}}an~M Stipanovi{\'c}.
\newblock Distributed constrained optimization over noisy networks.
\newblock In {\em Decision and Control (CDC), 2010 49th IEEE Conference on},
  pages 1945--1950. IEEE, 2010.

\bibitem{liu2016collective}
Qingshan Liu, Shaofu Yang, and Jun Wang.
\newblock A collective neurodynamic approach to distributed constrained
  optimization.
\newblock 2016.

\bibitem{chen2012fast}
Albert~I Chen and Asuman Ozdaglar.
\newblock A fast distributed proximal-gradient method.
\newblock In {\em Communication, Control, and Computing (Allerton), 2012 50th
  Annual Allerton Conference on}, pages 601--608. IEEE, 2012.

\bibitem{nedic2009distributed}
Angelia Nedi{\'c} and Asuman Ozdaglar.
\newblock Distributed subgradient methods for multi-agent optimization.
\newblock {\em Automatic Control, IEEE Transactions on}, 54(1):48--61, 2009.

\bibitem{borwein2010convex}
Jonathan~M Borwein and Adrian~S Lewis.
\newblock {\em Convex analysis and nonlinear optimization: theory and
  examples}.
\newblock Springer Science \& Business Media, 2010.

\bibitem{uzawa58}
H.~Uzawa.
\newblock {\em Studies in Linear and Nonlinear Programming}, chapter Iterative
  methods in concave programming, pages 154--165.
\newblock Stanford University Press, 1958.

\end{thebibliography}
}
\newpage
\section{Appendix}
\subsection{Proof of Theorem~\ref{thm:general-rate}}
\begin{lemma}
\label{lem:pda-lemma}
Let $\mathbf{\bar{Q}}\triangleq\mathbf{\bar{Q}}(A,A_0)$, and $\bz=[\bx^\top \by^\top]^\top$ for $\bx\in\cX$, $\by\in\cY$. For any ${\bf x}\in \cX$, and $\by\in\cY$, the iterate sequence $\{\bz^k\}_{k\geq 1}$ defined as in the statement of Theorem~\ref{thm:general-rate} satisfies for all $k\geq 0$
{\small
\begin{align*}
\mathcal{L}&({\bf x}^{k+1},\by)-\mathcal{L}({\bf x},\by^{k+1})\leq\left[D_x(\bx,\bx^k)+D_y(\by,\by^k)-\fprod{T(\bx-\bx^k),~\by-\by^k}\right]\\
&-\left[D_x(\bx,\bx^{k+1})+D_y(\by,\by^{k+1})-\fprod{T(\bx-\bx^{k+1}),~\by-\by^{k+1}}\right]-\frac{1}{2}(\bz^{k+1}-\bz^k)^\top\mathbf{\bar{Q}}(\bz^{k+1}-\bz^k).
\end{align*}}%
\end{lemma}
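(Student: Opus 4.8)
Our plan is to transcribe, in the notation of this paper, the one-step estimate underlying the ergodic analysis of Chambolle and Pock~\cite{chambolle2015ergodic}: we will extract an inequality from the optimality condition of each of the two proximal subproblems \eqref{eq:PDA-x}--\eqref{eq:PDA-y}, use the composite structure $\Phi=\rho+f$ to control the smooth part, and finally rearrange the resulting bilinear and quadratic terms into the telescoping form claimed in the statement. Throughout, recall that $D_x(\bx,\bar{\bx})=\tfrac12\norm{\bx-\bar{\bx}}_{\mathbf{D}_\tau}^2$ is $1$-strongly convex in $\bx$ with respect to the $\mathbf{D}_\tau$-norm (likewise for $D_y$), which is what makes the three-point estimates exact.

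First, with $\nu_x=\nu_y=1$, the subproblem defining $\bx^{k+1}$ minimizes a convex function plus $D_x(\cdot,\bx^k)$, so the standard three-point (prox) inequality gives, for every $\bx\in\cX$,
\[
\rho(\bx^{k+1})-\rho(\bx)+\fprod{\nabla f(\bx^k),~\bx^{k+1}-\bx}+\fprod{T(\bx^{k+1}-\bx),~\by^k}\leq D_x(\bx,\bx^k)-D_x(\bx,\bx^{k+1})-D_x(\bx^{k+1},\bx^k),
\]
and, applied to the $\by$-update with the extrapolated point $\bq^k\triangleq 2\bx^{k+1}-\bx^k$, for every $\by\in\cY$,
\[
h(\by^{k+1})-h(\by)-\fprod{T\bq^k,~\by^{k+1}-\by}\leq D_y(\by,\by^k)-D_y(\by,\by^{k+1})-D_y(\by^{k+1},\by^k).
\]
Since each $f_i$ is convex with $L_i$-Lipschitz gradient, combining the descent lemma at $\bx^k$ with the gradient inequality yields $f(\bx^{k+1})-f(\bx)\leq\fprod{\nabla f(\bx^k),~\bx^{k+1}-\bx}+\tfrac12\norm{\bx^{k+1}-\bx^k}_{\mathbf{D}_\tau-\mathbf{\bar{D}}_\tau}^2$, where $\mathbf{D}_\tau-\mathbf{\bar{D}}_\tau=\diag([L_i\id_n]_{i\in\cN})$.

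Next I would expand $\mathcal{L}(\bx^{k+1},\by)-\mathcal{L}(\bx,\by^{k+1})=(\rho(\bx^{k+1})-\rho(\bx))+(f(\bx^{k+1})-f(\bx))+\fprod{T\bx^{k+1},~\by}-\fprod{T\bx,~\by^{k+1}}+(h(\by^{k+1})-h(\by))$ and substitute the three bounds above for the $\rho$-, $f$-, and $h$-differences; the two $\fprod{\nabla f(\bx^k),~\bx^{k+1}-\bx}$ terms cancel. The ``anchor'' Bregman terms collect into $D_x(\bx,\bx^k)-D_x(\bx,\bx^{k+1})+D_y(\by,\by^k)-D_y(\by,\by^{k+1})$. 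Writing $\mathbf{\bar{Q}}$ in $2\times2$ block form with diagonal blocks $\mathbf{\bar{D}}_\tau$ and $\diag(\mathbf{D}_\kappa,\mathbf{D}_\gamma)$ and off-diagonal blocks $-T^\top,-T$, and using $D_x(\bx^{k+1},\bx^k)=\tfrac12\norm{\bx^{k+1}-\bx^k}_{\mathbf{D}_\tau}^2$, $D_y(\by^{k+1},\by^k)=\tfrac12\norm{\by^{k+1}-\by^k}_{\diag(\mathbf{D}_\kappa,\mathbf{D}_\gamma)}^2$, one checks that $-D_x(\bx^{k+1},\bx^k)-D_y(\by^{k+1},\by^k)+\tfrac12\norm{\bx^{k+1}-\bx^k}_{\mathbf{D}_\tau-\mathbf{\bar{D}}_\tau}^2$ equals $-\tfrac12(\bz^{k+1}-\bz^k)^\top\mathbf{\bar{Q}}(\bz^{k+1}-\bz^k)-\fprod{T(\bx^{k+1}-\bx^k),~\by^{k+1}-\by^k}$. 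It then remains to verify that the leftover bilinear terms, namely $-\fprod{T(\bx^{k+1}-\bx),\by^k}+\fprod{T\bq^k,\by^{k+1}-\by}+\fprod{T\bx^{k+1},\by}-\fprod{T\bx,\by^{k+1}}-\fprod{T(\bx^{k+1}-\bx^k),\by^{k+1}-\by^k}$, sum exactly to $-\fprod{T(\bx-\bx^k),~\by-\by^k}+\fprod{T(\bx-\bx^{k+1}),~\by-\by^{k+1}}$; expanding both sides over the monomials $\fprod{T\bx^a,\by^b}$ with $\bx^a\in\{\bx,\bx^k,\bx^{k+1}\}$, $\by^b\in\{\by,\by^k,\by^{k+1}\}$ shows all coefficients match (the choice $\bq^k=2\bx^{k+1}-\bx^k$ is precisely what cancels the $\bx^{k+1}$-monomials). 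Combining these rearrangements delivers the stated inequality.

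I expect the only genuinely delicate part to be this last bilinear bookkeeping: there is no conceptual difficulty, but the several $\fprod{T\cdot,\cdot}$ terms coming from the two prox inequalities, from $\mathcal{L}$, and from the residual pulled into $\mathbf{\bar{Q}}$ must be tracked monomial by monomial to be sure every cancellation is as claimed; everything else is a direct transcription of the argument in~\cite{chambolle2015ergodic}.
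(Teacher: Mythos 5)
Your proposal is correct and follows essentially the same route as the paper's proof: the paper derives the same three-point (strong convexity) inequalities for the prox subproblems — written node-by-node and then summed, since the updates are separable — combines them with the convexity-plus-Lipschitz-gradient bound on $f$, and performs the identical bilinear rearrangement into the $\mathbf{\bar{Q}}$ quadratic and telescoping terms (a rearrangement the paper compresses into ``summing \eqref{eq:lemma-x} and \eqref{eq:lemma-y} gives the desired result''). Your monomial-by-monomial bookkeeping of the $\fprod{T\cdot,\cdot}$ terms checks out, so no gaps.
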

\begin{proof}
Note that x-subproblem in \eqref{eq:PDA-x} is separable in local decisions $\{x_i\}_{i\in\cN}$; for each $i\in\cN$ the local subproblem over $x_i$ is strongly convex with constant $1/\tau_i$. Indeed, let $\bp^k=T^\top \by^k$ and define $\{p_i^k\}_{i\in\cN}$ such that $p_i^k$ is the subvector corresponding to the components of $x_i$, i.e., $\bp^k=[p_i^k]_{i\in\cN}$. Thus, the definitions of $\rho$, $f$ and $D_x$, $\nu_x=1$, and \eqref{eq:PDA-x} imply that for all $i\in\cN$
{\small
\begin{equation}
\label{eq:x-subproblem}
x_i^{k+1}=\argmin_{x_i}\rho_i(x_i)+f_i(x_i^k)+\fprod{\grad f_i(x_i^k),~x_i-x_i^k}+\fprod{p_i^k,x_i}+\frac{1}{2\tau_i}\norm{x_i-x_i^k}^2.
\end{equation}}%
Therefore, the strong convexity of the objective in local subproblem \eqref{eq:x-subproblem} for $i\in\cN$ implies
{\small
\begin{eqnarray*}
\lefteqn{\rho_i(x_i)+\fprod{\grad f_i(x_i^k),~x_i}+\fprod{p_i^k,x_i}+\frac{1}{2\tau_i}\norm{x_i-x_i^k}^2\geq}\nonumber\\
& & \rho_i(x_i^{k+1})+\fprod{\grad f_i(x_i^k),~x_i^{k+1}}+\fprod{p_i^k,x_i^{k+1}}+\frac{1}{2\tau_i}\norm{x_i^{k+1}-x_i^k}^2+\frac{1}{2\tau_i}\norm{x_i-x_i^{k+1}}^2.
\label{eq:triangular-ineq-Bregman}
\end{eqnarray*}}%
Convexity of $f_i$ and Lipschitz continuity of $\grad f_i$ implies that
{\small
\begin{equation*}
f_i(x_i)\geq f_i(x_i^k)+\fprod{\grad f_i(x_i^k),~x_i-x_i^k}\geq f_i(x_i^{k+1})+\fprod{\grad f_i(x_i^{k+1}),~x_i-x_i^{k+1}}-\frac{L_i}{2}\norm{x_i^{k+1}-x_i^k}^2.
\end{equation*}}%
Since $\sum_{i\in\cN}\fprod{p_i^k,~x_i}=\fprod{T\bx,~\by^k}$ for all $\bx$, summing these two inequalities 
for each $i\in\cN$, and then summing the resulting inequalities over $i\in\cN$, we get
{\small
\begin{eqnarray}
\lefteqn{\Phi(\bx)+D_x(\bx,\bx^{k})\geq}\label{eq:lemma-x}\\
& & \Phi(\bx^{k+1})+\fprod{T(\bx^{k+1}-\bx),~\by^k}+D_x(\bx,\bx^{k+1})+\tfrac{1}{2}(\bx^{k+1}-\bx^k)^\top\mathbf{\bar{D}}_\tau(\bx^{k+1}-\bx^k).
\nonumber
\end{eqnarray}}%
Similarly, let $\bq^k=T(2\bx^{k+1}-\bx^k)$ and define $q_0^k\in\reals^{m_0}$ and $q_i^k\in\reals^{m_i}$ for $i\in\cN$ such that $q_0^k$ is the subvector corresponding to the components of $\blambda$, and $q_i^k$ is the subvector corresponding to the components of $\theta_i$ for $i\in\cN$, i.e., $\bp^k=[{p_1^k}^\top \ldots {p_N^k}^\top {p_0^k}^\top]^\top$. Thus, the definitions of $h$ and $D_y$, and $\nu_y=1$ imply that according to \eqref{eq:PDA-x}
{\small
\begin{align*}
\blambda^{k+1}&=\argmin_{\blambda} h_0(\blambda)-\fprod{q_0^k,\blambda}+\frac{1}{2\gamma}\norm{\blambda-\blambda^k}^2,\\
\theta_i^{k+1}&=\argmin_{\theta_i}h_i(\theta_i)-\fprod{q_i^k,\theta_i}+\frac{1}{2\kappa_i}\norm{\theta_i-\theta_i^k}^2,\quad \forall\ i\in\cN.
\end{align*}}%
Therefore, the strong convexity of the objectives in these subproblems implies that
{\small
\begin{align*}
h_0(\blambda)-\fprod{q_0^k,\blambda}+\frac{1}{2\gamma}\norm{\blambda-\blambda^k}^2
&\geq h_0(\blambda^{k+1})-\fprod{q_0^k,\blambda^{k+1}}+\frac{1}{2\gamma}\norm{\blambda^{k+1}-\blambda^k}^2+\frac{1}{2\gamma}\norm{\blambda-\blambda^{k+1}}^2,\\
h_i(\theta_i)-\fprod{q_i^k,\theta_i}+\frac{1}{2\kappa_i}\norm{\theta_i-\theta_i^k}^2
&\geq h_i(\theta_i^{k+1})-\fprod{q_i^k,\theta_i^{k+1}}+\frac{1}{2\kappa_i}\norm{\theta_i^{k+1}-\theta_i^k}^2+\frac{1}{2\kappa_i}\norm{\theta_i-\theta_i^{k+1}}^2.
\end{align*}}%
Since $\fprod{q_0^k,~\blambda}+\sum_{i\in\cN}\fprod{q_i^k,~\theta_i}=\fprod{T(2\bx^{k+1}-\bx^k),~\by}$ for all $\by$, summing these the second inequality over $i\in\cN$ and then summing the resulting inequality with the first one, we get
{\small
\begin{eqnarray}
\lefteqn{h(\by)+D_y(\by,\by^{k})\geq}\label{eq:lemma-y}\\
& & h(\by^{k+1})-\fprod{T(2\bx^{k+1}-\bx^k),~\by^{k+1}-\by}+D_y(\by,\by^{k+1})+\tfrac{1}{2}(\by^{k+1}-\by^k)^\top\mathbf{D}_\kappa(\by^{k+1}-\by^k).
\nonumber
\end{eqnarray}}%
Summing \eqref{eq:lemma-x} and \eqref{eq:lemma-y} gives the desired result.
\end{proof}

Now we continue to the proof of Theorem~\ref{thm:general-rate}. Let $\mathbf{\bar{Q}}\triangleq\mathbf{\bar{Q}}(A,A_0)$. Since $\mathbf{\bar{Q}}\succeq 0$, we can drop the last term in the inequality given in the statement of Lemma~\ref{lem:pda-lemma}; and summing it over $k$, we get
{\small
\begin{align*}
\sum_{k=0}^{K-1}\mathcal{L}({\bf x}^{k+1},\by)-\mathcal{L}({\bf x},\by^{k+1})
\leq&\left[D_x(\bx,\bx^0)+D_y(\by,\by^0)-\fprod{T(\bx-\bx^0),~\by-\by^0}\right]\\
&-\left[D_x(\bx,\bx^{K})+D_y(\by,\by^{K})-\fprod{T(\bx-\bx^{K}),~\by-\by^{K}}\right].
\end{align*}}%
$\mathbf{\bar{Q}}\succeq 0$ also implies that $D_x(\bx,\bx^{K})+D_y(\by,\by^{K})-\fprod{T(\bx-\bx^{K}),~\by-\by^{K}}\geq 0$; therefore, \eqref{eq:DPA-rate} follows from Jensen's inequality.
\end{document}